\numberwithin{equation}{section}
\newtheorem{theorem}{Theorem}[section]
\newtheorem{conj}[theorem]{Conjecture}
\newtheorem{cor}[theorem]{Corollary}
\newtheorem{proposition}[theorem]{Proposition}
\newtheorem{lemma}[theorem]{Lemma}
\newtheorem{prop}[theorem]{Proposition}
\theoremstyle{definition}
\newtheorem{definition}[theorem]{Definition}
\newtheorem{remark}[theorem]{Remark}
\newcommand\gn{\g^\natural}
\newcommand\half{\tfrac{1}{2}}
\newcommand\be{\beta}
\newcommand\g{\mathfrak g}
\newcommand\ga{\widehat{\mathfrak g}}
\newcommand\h{\mathfrak h}
\newcommand\n{\mathfrak n}
\newcommand\D{\Delta}
\renewcommand\l{\lambda}
\newcommand\Da{\widehat\Delta}
\newcommand\Pia{\widehat\Pi}
\newcommand\Wa{\widehat{W}}
\renewcommand\d{\delta}
\renewcommand\a{\alpha}
\newcommand{\Z}{\mathbb Z}
\renewcommand\th{\theta}
\newcommand\nat{\mathbb N}
\renewcommand\L{\Lambda}
\newcommand\e{\epsilon}
\newcommand\C{\mathbb C}
\newcommand\R{\mathbb R}
\newcommand{\ZZ}{\mathbb{Z}}
\newcommand{\Ws}{W_k^{\min}(\g)}
\newcommand{\Wu}{W^k_{\min}(\g)}
\newcommand{\vac}{{\bf 1}}
\newcommand{\bea}{\begin{eqnarray}}
\newcommand{\eea}{\end{eqnarray}}
\begin{document}
\title{Defining relations for minimal unitary quantum affine W-algebras}
\author[Adamovi\'c, Kac, M\"oseneder, Papi]{Dra{\v z}en~Adamovi\'c}
\author[]{Victor~G. Kac}
\author[]{Pierluigi M\"oseneder Frajria}
\author[]{Paolo  Papi}
\begin{abstract} We prove that any unitary highest weight module over a universal minimal quantum affine $W$-algebra at non-critical level descends to its simple quotient. We find the defining relations of the unitary simple minimal quantum affine $W$-algebras and the list of all their irreducible positive energy modules.  We also classify all irreducible highest weight modules for the  simple affine vertex algebras in the cases when the associated simple minimal $W$-algebra is unitary.
\end{abstract}
\keywords{affine Lie algebra, vertex algebra, W-algebra, integrable module, Zhu algebra}

\maketitle
\section{Introduction}
Vertex algebras have been playing an increasingly important role in quantum physics (see e.g. \cite{Lemos, Bonetti, Carpi, Cheng, Beem} and references therein). Some of the most relevant to physics among them are unitary vertex  algebras. 

 The problem of unitarity of highest weight representations of infinite-dimensional Lie superalgebras has been a hot topic in Mathematics and Physics in the 1980s and 1990s. Unitary highest weight representations have been classified for many  infinite-dimensional Lie algebras and superalgebras, including affine Lie algebras  \cite{VB}, Virasoro algebra \cite{UV1, GKO0},  $N=1,2,3$ and $4$ superconformal algebras 
 \cite{FQS1,  UV1, ciao,  KR, KW0, KacT, BFK, GKO, Div,  ET1, ET2, ET3, KS, M, FST, A-01,CRW}. All these Lie (super)algebras arise naturally in the context of a special class of vertex algebras, called quantum affine $W$-algebras, which are obtained by quantum Hamiltonian reduction from affine vertex algebras.\par 
Quantum affine $W$-algebras
$W_k(\g,x,f),$ are simple vertex algebras  constructed in \cite{KRW, KW1}, starting from  a datum $(\g,x,f)$ and $k\in\C$. Here $\g=\g_{\bar 0}\oplus \g_{\bar 1}$ is a basic  Lie superalgebra, i.e. $\g$ is simple, its even part $\g_{\bar 0}$ is a reductive Lie algebra and $\g$ carries an even invariant non-degenerate supersymmetric bilinear form $(.|.)$, $x$ is an $ad$--diagonalizable element of $\g_{\bar 0}$ with eigenvalues in $\tfrac{1}{2}\Z$, $f\in\g_{\bar 0}$ is such that $[x,f]=-f$ and the eigenvalues of $ad\,x$ on the centralizer $\g^f$ of $f$ in $\g$ are non-positive, and  $k$ is non-critical, i.e. $k\ne -h^\vee$, where $h^\vee$ is the dual Coxeter number of $\g$. 
Recall that $W_k(\g,x,f)$ is the unique simple quotient  of the universal $W$-algebra, denoted by $W^k(\g,x,f),$ which is  freely strongly generated by elements labeled by a basis of the centralizer of  $f$ in $\g$ \cite{KW1}. 

In the paper \cite{KMP1} we focused on {\it minimal} data $(\g,x,f)$, i.e. the cases when $f$ is an even minimal nilpotent element.  In this case $x$ and $f$ are contained in a (unique) subalgebra $\mathfrak s$, isomorphic to $sl(2)$. The Virasoro, Neveu-Schwarz and $N=2$ algebras cover all minimal $W$-algebras (associated with $\g=sl(2), spo(2|1) ,sl(2|1)$ respectively), such that the $0$-eigenspace  $\g_0$ of $ad\,x$ is abelian: their unitarity, and that of their irreducible highest weight modules,  has been studied in the papers quoted  above. In  \cite{KMP1} we dealt with the cases when $\g_0$ is not abelian, and we  found  all non-critical $k\in\C$ for which the simple minimal $W$-algebras  $W_k(\g,x,f)$, denoted henceforth by $\Ws$, are unitary, along with the (partly conjectural)  classification of  unitary highest weight modules over their universal covers $\Wu$. We call this set of values of $k$ the {\it unitarity range}. 
Recall from \cite{AKMPP} that a level $k$ is said to be {\it collapsing} if $\Ws$ is the simple affine vertex algebra attached to the centralizer $ \g^\natural$ in $\g_0$ of   $\mathfrak s$. If 
$k$ is collapsing, then the unitarity of $\Ws$ reduces to the unitarity of an affine vertex algebras, which is well-understood.  Requiring that the unitary range contains non-collapsing levels  imposes severe restrictions on  the Lie superalgebra  $\g$, namely  $\g$ must be one of the algebras listed in  Table 1. It turns out that the unitarity range (described explicitly in 
Table 1) is precisely the set of levels for which the affine  vertex subalgebra of $\Ws$ generated by $\g^\natural$ is integrable when viewed as a $\ga^\natural$-module. 

The analysis of the unitarity of $\Ws$ developed in \cite{KMP1} led to the more general problem of classifying unitary highest weight modules over $\Ws$; these results,   regarding modules in the Neveu-Schwarz sector, are summarized in Section 2.   

In particular, we gave a mathematically rigorous proofs of the classification 
for the $N=3$ and $N=4$ superconformal algebras, due to  Miki \cite{M} and Eguchi-Taormina \cite{ET1, ET2, ET3}, respectively. 


 \par
It was   conjectured in \cite{KMP1} that actually any unitary highest weight $\Wu$-module descends to $\Ws$. In the present paper we prove this 
conjecture (Theorem \ref{t1} and Corollary \ref{uu}). The proof is based on the classification of irreducible highest weight modules over the simple affine vertex algebras $V_k(\g)$ for these $k$ (Theorem \ref{TA}). We also find generators for the maximal ideal $I^k$ of $\Wu$, so that $\Ws=\Wu/I^k$
(Theorem \ref{C}). Finally, for all non-critical $k\in\C$ in the unitarity range, we classify all irreducible highest weight modules over $\Ws$(Theorem \ref{t2}), using the Zhu algebra method.  
It follows from this classification that all these vertex algebras have infinitely many irreducible modules, and therefore are not rational, unless $k$ is a collapsing level. We also prove that for $\Ws$ all irreducible 
positive energy modules are highest weight modules (Theorem \ref{ipehw}), whereas this statement does not hold for $V_k(\g)$
(Remark 	\ref{non-wh-1}).
 In the final section, we show examples of positive energy modules over $V_k(\g)$ which are not highest weight and  we outline an approach to more general representations of $\Ws$, showing the existence of non-positive energy modules.\par
In the present paper we keep notation and terminology of  \cite{KMP1}; in particular, $\nat$ and $\ZZ_+$ denote the sets of positive and non-negative integer numbers, respectively. The base field is $\C$.

\section{Setup}
\subsection{Minimal $W$-algebras} Let $\g=\g_{\bar 0}\oplus \g_{\bar 1}$ be a simple finite-dimensional  Lie superalgebra over $\C$  with a reductive 
even part $\g_{\bar 0}$ and  a non-degenerate even supersymmetric  invariant bilinear form $(.|.)$. They are classified in \cite{Kacsuper}. Let 
$\mathfrak s=\{e,x,f\}\subset \g_{\bar 0}$ be a {\it minimal} $sl(2)$-triple, i.e.  $[x,e]=e, [x,f]=-f, [e,f]=x$ and the $ad\,x$ gradation of $\g$ has the form 
\begin{equation}\label{1}
\g=\bigoplus_{j\in\tfrac{1}{2}\ZZ}\g_j,\quad\text{where $\g_j=0$ for $|j|>1$, $\g_1=\C e,\,\g_{-1}=\C f$.}
\end{equation} Denote by $\gn$ the centralizer of $\mathfrak s$ in $\g_0$. Let $W^k_{\min}(\g)$ be the associated to $\mathfrak s$ and $k\in\C$ universal  quantum affine $W$-algebra \cite{KW1}. This vertex algebra is strongly generated by 
fields $J^{\{a\}} ,\,a\in \g^{\natural},G^{\{v\}} ,\,v\in \g_{-1/2}, L$ of respective conformal weight $1,3/2,2$ and $\l$-brackets explicitly given in \cite{KW1}.
We normalize the bilinear form $(.|.)$ by the condition $(x|x)=\tfrac{1}{2}$, and denote by $h^\vee$ the corresponding dual Coxeter number. We shall assume throughout the paper that $k\ne -h^\vee$, i.e. $k$ is non-critical. Then the vertex algebra $W^k_{\min}(\g)$ has a unique maximal ideal $I^k$, and we let $W_k^{\min}(\g)=W^k_{\min}(\g)/I^k$ be the corresponding simple vertex algebra.\par
\subsection{Unitary conformal vertex algebras} Let $V$ be a conformal vertex algebra with conformal vector $L=\sum_{n\in\mathbb Z} L_nz^{-n-2}$.  Let $\phi$ be a conjugate linear involution of $V$. A Hermitian form $H(\, . \,\, , \, .\, )$ on $V$ is called $\phi$--{\it invariant} if, for all $a\in V$, one has \cite{KMP}
\begin{equation}
H(v,Y(a,z)u)=H(Y(A(z)a,z^{-1})v,u),\quad u,v\in V.
\end{equation}
Here the linear map $A(z):V\to V((z))$ is defined by 
\begin{equation}\label{12}
A(z)=e^{zL_1}z^{-2L_0}g,
\end{equation}
where 
\begin{equation}\label{113}
g(a)=(-1)^{L_0+2L_0^2}\phi(a),\quad a\in V.
\end{equation}
We say that $V$ is {\it unitary} if there is a conjugate linear involution $\phi$ of $V$ and a $\phi$-invariant Hermitian form on $V$ which is positive definite.
\subsection{Unitary minimal $W$-algebras and unitary highest weight modules.}In classifying the unitary minimal $W$-algebras $\Ws$, one first considers the levels $k$ when $\Ws$ is either $\C$ or an affine vertex algebra. Such levels are called {\sl collapsing levels}.
The triples $(\g,\mathfrak s,k)$ such that $\Ws=\C$ are described in Proposition 3.4 of \cite{AKMPP}. Corollary 7.12 of \cite{KMP} provides the list of triples $(\g,\mathfrak s,k)$ such that $\Ws$ is an affine unitary vertex algebra. 

Turning to the non collapsing levels, 
it is proven in \cite[Proposition 7.9]{KMP} that, if  $W^{\min}_k(\g)$ is  unitary and $k$ is not a collapsing level, then the parity of $\g$ is compatible with the $ad\,x$--gradation, i.e. the parity of the whole subspace $\g_j$ is $2j\mod 2$.
It follows from \cite{KRW}, \cite{KW1} that for each basic simple Lie superalgebra $\g$ there is at most one minimal datum $(\g,\mathfrak s)$ that is compatible with parity, and  the  complete list of the  $\g$ which admit such a datum is as follows:
\begin{align}\label{ssuper}
\begin{split}
sl(2|m)\text{ for $m\ge3$},\quad psl(2|2),\quad  spo(2|m)\text{ for $m\geq 0$,}\\
osp(4|m)\text{ for $m>2$ even},\quad D(2,1;a),\quad  F(4),\quad G(3).
\end{split}
\end{align}
\begin{remark}\label{d21a}Recall from \cite{Kacsuper} that in  the case of $D(2,1 ; a), $ the parameter $a$ ranges over $\C\setminus\{0,-1\}$ (if $a=0,-1$ the superalgebra is not simple); moreover the symmetric group $\mathcal S_3$ generated by the tranformations $a\mapsto -1-a, a\mapsto 1/a$ induces Lie superalgebra isomorphisms.\end{remark}\par 
  The even part $\g_{\bar 0}$ of $\g$ in these cases is $\g^\natural\oplus \mathfrak s$ with $\g^\natural$ a reductive Lie algebra.
In Propositions 7.1. and 7.2 of \cite{KMP1} it is proven that  the conjugate linear involutions of $\Ws$ that fix the Virasoro vector $L$ are in one-to-one correspondence with the conjugate linear involutions $\phi$ of $\g$  that fix pointwise the triple $\{e,x,f\}$. It is easy to see that, in order to have $\Ws$ unitary, $\phi_{|\g^\natural}$ must be the conjugation corresponding to a compact real form. Such a conjugate linear involution is called an {\sl almost compact involution} and Proposition 3.2 of \cite{KMP1} shows that an almost compact involution $\phi_{ac}$ exists in all the cases listed in \eqref{ssuper}. 
Since the cases $\g=spo(2|m),\,m=0,1,$ and $2$, correspond to the well understood cases of the simple Virasoro, Neveu-Schwarz and $N=2$ vertex algebras, we exclude these 
$\g$ from consideration. For the other cases, Theorem 1.4 of \cite{KMP1} lists all the pairs
 $(\g,k)$ for which the $\phi_{ac}$-invariant hermitian form on $\Ws$  is positive definite, making $\Ws$ a unitary vertex algebra. We say that $k$  belongs to the unitary range if  $k$ is not collapsing and $W_k^{\min}(\g)$ is a unitary vertex algebra.
 
  We have \cite{KMP1}
\begin{equation}\label{2}
\gn=\bigoplus_{i\in S}\gn_i\text{ and } \g_0=\gn\oplus \C x,\end{equation}
where $\gn_i$ are simple summands of $\gn$, and either $S=\{1\}$ or $S=\{1,2\}$; the latter happens  only for $D(2,1;\tfrac{m}{n})$. Let $\h^\natural $ be a Cartan subalgebra of $\g^\natural$, then $\h=\h^\natural +\C x$ is a Cartan subalgebra of $\g_0$ and of  $\g$. Let $\theta\in\h^*$ be the root of $e$, so that $(\theta|\theta)=2$, and denote by $\theta_i\in (\h^\natural)^*$ the highest root of $\gn_i$.
Denote by $M_i(k)$ the levels of the affine Lie algebras $\widehat\g_i^\natural$ in $W^k_{\min}(\g)$. Then we have for $i\in S$ \cite{KMP1}
\begin{align}\label{MIK}M_i(k)&=
\frac{2k}{(\theta_i|\theta_i)}+\chi_i,\\
\label{chii}\chi_i&=-\xi(\theta_i^\vee),\end{align} 
where $\xi\in(\h^\natural)^*$ is  the highest weight of the $\g^\natural$-module $\g_{-1/2}$ (this module is irreducible, except for $\g=psl(2|2)$, when its two  irreducible components
turn out to be isomorphic). In \cite{KMP1} it is shown that  $\chi_i=-2$ when $\g= spo(2|3)$,  and  $\chi_i=-1$ otherwise.\par
 In Table  1 we describe the unitary range in all cases, along with  the subalgebras $\g^\natural$, the numbers $M_i(k)$, and  the dual Coxeter numbers $h^\vee$.
\begin{table}[h]\label{Table1}
\begin{tabular}{c | c| c |c | c | c |c}
$\g$&
$psl(2|2)$&
$spo(2|3)$&
$spo(2|m), m>4$&
$D(2,1;\tfrac{m}{n}),\,m,n\in\nat$&
$F(4)$&
$G(3)$\\\hline
$-k$&$\mathbb N+1$&$\frac{1}{4}(\mathbb N+2)$&$\frac{1}{2}(\mathbb N+1)$&$\frac{mn}{m+n}\nat,\ (m,n)=1$
&$\frac{2}{3}(\mathbb N+1)$&$\frac{3}{4}(\mathbb N+1)$
\\\hline
$\g^\natural$&$sl(2)$&$sl(2)$&$so(m)$&$sl(2)\oplus sl(2)$&$so(7)$&$G_2$
\\\hline
$-M_i(k)$&$k+1$&$4k+2$&$2k+1$&$\frac{m+n}{n}k+1, \frac{m+n}{m}k+1$ &$\frac{3}{2} k+1$ & $\frac{4}{3} k+1$
\\\hline
$h^\vee$&$0$&$\tfrac{1}{2}$&$2-\tfrac{1}{2}m$&$0$ &$-2$ & $ -\tfrac{3}{2}$
\end{tabular}
\vskip8pt
\caption{Unitarity range, $\g^\natural, M_i(k)$, and $h^\vee$}
\end{table}

We also make a specific choice for a set of simple roots $\Pi$ of $\g$ in each case.
In Table 2 we list our choice of $\Pi=\{\a_1,\ldots\}$ of $\g$, ordered from left to right, as well as the invariant bilinear form 
$(.|.)$ on $\h^*$, and the  root $\theta$. 
\begin{table}[h]\label{Table2}
{\scriptsize
\begin{tabular}{c | c| c |c }
$\g$&$\Pi$&\text{ $(. |.)$} & $\theta$\\
\hline
$psl(2|2)$&
$\{\e_1-\d_1,\d_1-\d_2,\d_2-\e_2\}$&$(\e_i|\e_j)=\d_{i,j}=-(\d_i|\d_j)$ & $\e_1-\e_2$\\
& &$(\e_i|\d_j)=0$
\\\hline
$spo(2|2m+1), m\ge1$&
$\{\d_1-\e_1,\e_1-\e_2,\ldots,\e_{m-1}-\e_m,\e_m\}$&$(\e_i|\e_j)=-\tfrac{1}{2}\d_{i,j}, (\d_1|\d_1)=\tfrac{1}{2},\,(\e_i|\d_1)=0$& $2\d_1$
\\\hline
$spo(2|2m), m\ge 3$&
$\{\d_1-\e_1,\e_1-\e_2,\ldots,\e_{m-1}-\e_m,\e_{m-1}+\e_m\}$&$(\e_i|\e_j)=-\tfrac{1}{2}\d_{i,j}, (\d_1|\d_1)=\tfrac{1}{2},\,(\e_i|\d_1)=0$& $2\d_1$
 \\\hline
$D(2,1;a)$&$\{\e_1-\e_2-\e_3, 2\e_2, 2\e_3\}$& $(\e_1|\e_1)=\frac{1}{2}, (\e_2|\e_2)=\frac{-1}{2(1+a)},  (\e_3|\e_3)=\frac{-a}{2(1+a)}$ & $2\e_1$\\
 && $(\e_1|\e_2)=(\e_1|\e_3)=(\e_2|\e_3)=0$ \\\hline
$F(4)$&$\{\tfrac{1}{2}(\d_1-\e_1-\e_2-\e_3), \e_3,\e_2-\e_3,\e_1-\e_2\}$&$(\e_i|\e_j)=-\tfrac{2}{3}\d_{i,j}, (\d_1|\d_1)=2$& $\d_1$\\
& &$(\e_i|\d_1)=0$
\\\hline
$G(3)$&$\{\d_1+\e_3, \e_1,\e_2-\e_1\}$&$(\e_i|\e_j)=\frac{1-3\d_{i,j}}{4}, (\d_1|\d_1)=\frac{1}{2}$& $2\d_1$\\
& &$(\e_i|\d_1)=0,\ \e_1+\e_2+\e_3=0$
\end{tabular}
}

\caption{Simple roots $\Pi$, invariant form $(.|.)$, and the highest root $\theta$ of $\g$}

\end{table}

Let $\D^\natural$ be the set of roots of $(\g^\natural,\h^\natural)$. We made our choice of $\Pi$ so that $\Pi^\natural=\Pi\cap \D^\natural$ is a set of simple roots for $\g^\natural$. Write
$\g^\natural=\n^\natural_-\oplus\h^\natural\oplus\n^\natural_+$ for the triangular decomposition of $\g^\natural$ corresponding to choosing $\Pi^\natural$ as a set of simple roots. Note that $\a_1$ is an isotropic root, and that we have
\begin{equation}\label{a1}
(\a_1)_{|\h^\natural}=-\xi.
\end{equation}
We parametrize the highest weight modules for $W^k_{\min}(\g)$ following Section 7 of \cite{KW1}.
 For  $\nu\in (\h^\natural)^*$ and $\ell_0\in\C$, let $L^W(\nu,\ell_0)$ denote the irreducible highest weight  $W^k_{\min}(\g)$--module with highest weight $(\nu,\ell_0)$ and highest weight vector $v_{\nu,\ell_0}$. This means that one has
\begin{align*}
&J^{\{h\}}_0v_{\nu,\ell_0}=\nu(h)v_{\nu, \ell_0} \text{ for } h\in\h^\natural,\quad L_0v_{\nu,\ell_0}=\ell_0v_{\nu,\ell_0},\\
&J^{\{u\}}_nv_{\nu,\ell_0}=G^{\{v\}}_nv_{\nu,\ell_0}=L_nv_{\nu,\ell_0}=0\text{ for $n>0$, $u\in \g^\natural$},\ v\in\g_{-1/2}, \\
&J^{\{u\}}_0v_{\nu,\ell_0}=0\text{ for } u\in\n^\natural_{+}.
\end{align*}
Let $P^+\subset (\h^\natural)^*$ be the set of dominant integral weights  for $\g^\natural$ and let
\begin{equation}\label{p+k}P^{+}_k=\left\{\nu\in P^+\mid  \nu(\theta^\vee_i)\le M_i(k)\text{ for all $i\ge 1$}\right\}.\end{equation}

\begin{definition} An element $\nu\in P^+_k$ is called an {\it extremal weight} if 
$\nu+\xi$ doesn't lie in $P^+_k$. Equivalently, $\nu$ is extremal if
\begin{equation}\label{extr}\nu(\theta_i^\vee)>M_i(k)+\chi_i\text{ for some $i\in S$}.\end{equation}
\end{definition}
 For $\nu\in P^+$, introduce the following number \cite{KMP1}
\begin{equation}\label{Aknu}
 A(k,\nu)=\frac{(\nu|\nu+2\rho^\natural)}{2(k+h^\vee)}+\frac{(\xi|\nu)}{k+h^\vee}((\xi|\nu)-k-1),
 \end{equation}
where $2\rho^\natural$ is the sum of positive roots of $\g^\natural$. The main result of \cite{KMP1} is the following theorem.
\begin{theorem}\label{nec} Let  $L^W(\nu,\ell_0)$ be  an irreducible highest weight $W^k_{\min}(\g)$--module for  $\g= psl(2|2),$ $ spo(2|m)$ with $m\ge 3, \, D(2,1;a),\, F(4)$ or $G(3)$.
\begin{enumerate}
\item This module can be unitary  only if the following conditions hold:
\begin{enumerate}
\item $M_i(k)$ are non-negative integers,
\item $\nu(\theta_i^\vee) \leq M_i(k)$ for all $i$,
\item
\begin{equation}\label{eh}
\ell_0\ge A(k,\nu),
\end{equation}
 and equality holds in \eqref{eh} if $\nu(\theta^\vee_i)>M_i(k)+\chi_i$ for $i=1$ or $2$.
 \end{enumerate}
\item This module is unitary if the  following conditions hold:
\begin{enumerate}
\item $M_i(k)+\chi_i\in\mathbb Z_+$ for all $i$,
\item  $\nu(\theta^\vee_i) \leq M_i(k)+\chi_i$ for all $i$ (i.e. $\nu$ is not extremal),
\item  inequality \eqref{eh} holds.
\end{enumerate}\end{enumerate}
\end{theorem}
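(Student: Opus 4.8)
The plan is to derive the necessary conditions of part (1) from affine representation theory together with a single norm computation at conformal weight $\tfrac32$, and to obtain the sufficiency of part (2) by upgrading this local positivity to the whole module.

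For part (1) I would restrict the $\phi_{ac}$--invariant Hermitian form on $L^W(\nu,\ell_0)$ to the vertex subalgebra generated by the currents $J^{\{a\}}$, $a\in\g^\natural$. By \eqref{2} this is the affine vertex algebra of $\g^\natural$ at the levels $M_i(k)$, and $L^W(\nu,\ell_0)$ restricts to a highest weight module over $\widehat{\g^\natural}$ whose highest weight is read off from $\nu$. Since a highest weight module over an affine Lie algebra carries a positive definite invariant form exactly when it is integrable \cite{VB}, positivity already forces $M_i(k)\in\ZZ_+$ and $\nu(\theta_i^\vee)\le M_i(k)$, which are (a) and (b). For (c) I would compute the Gram matrix of the weight--$\tfrac32$ states $G^{\{v\}}_{-1/2}v_{\nu,\ell_0}$, $v\in\g_{-1/2}$, from the bracket $[G^{\{u\}}_\l G^{\{v\}}]$ of \cite{KW1}. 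That bracket contributes a piece linear in $L$ (producing $\ell_0$ via $L_0$), a piece quadratic in the $J^{\{a\}}$ (producing on $v_{\nu,\ell_0}$ the Casimir value $(\nu|\nu+2\rho^\natural)$ and the $\xi$--dependent correction $(\xi|\nu)((\xi|\nu)-k-1)$), and a scalar. With the normalization $(x|x)=\tfrac12$ this assembles, by $\g^\natural$--equivariance and Schur's lemma, into a Hermitian operator on $\g_{-1/2}$ that is affine in $\ell_0$ and whose least eigenvalue vanishes exactly at $\ell_0=A(k,\nu)$, cf.\ \eqref{Aknu}; positive semidefiniteness is then \eqref{eh}.

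The equality clause of (1)(c) is the delicate point. By \eqref{chii} we have $\xi(\theta_i^\vee)=-\chi_i$, so if $v_\xi\in\g_{-1/2}$ is a highest weight vector then $G^{\{v_\xi\}}_{-1/2}v_{\nu,\ell_0}$ has $\h^\natural$--weight $\nu+\xi$, and the defining condition $\nu+\xi\notin P^+_k$ is exactly the extremality inequality \eqref{extr}. When $\nu$ is extremal this top state would occupy a weight above the integrable bound $M_i(k)$ of the restricted affine module; it must therefore be null, and since the corresponding diagonal Gram entry equals, by the computation above, a positive multiple of $\ell_0-A(k,\nu)$, we are forced to $\ell_0=A(k,\nu)$.

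For part (2) the task is to promote positivity at weight $\tfrac32$ to positivity on all of $L^W(\nu,\ell_0)$, and I expect this to be the main obstacle. The leverage is the shifted bound (2)(b): when $\nu(\theta_i^\vee)\le M_i(k)+\chi_i$ one checks from \eqref{chii} that $\nu+\xi\in P^+_k$, so the $G$--fields create no obstructing null vector at any level. I would decompose $L^W(\nu,\ell_0)$ under the commuting actions of the affine subalgebra of part (1) and of $L$, writing the form as an orthogonal sum over isotypic components --- each an integrable, hence unitary, $\widehat{\g^\natural}$--module by (a) and the bound --- and then try to factor the full Gram form into manifestly non-negative blocks whose defect is governed by $\ell_0-A(k,\nu)\ge0$ from (2)(c). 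The weight--$\tfrac32$ inequality and the affine integrability input are linear algebra and classical results, but controlling the infinite-dimensional form at all levels requires more: either an explicit realization of $L^W(\nu,\ell_0)$ as a unitary module --- through free fields, or as the quantum Hamiltonian reduction of a unitary $V_k(\g)$--module --- whose unitarity must be verified across the finite list \eqref{ssuper}, or else a deformation argument propagating positivity in $\ell_0$ from a boundary value. The genuinely case-dependent analysis over $psl(2|2)$, $spo(2|m)$, $D(2,1;a)$, $F(4)$ and $G(3)$ should concentrate here.
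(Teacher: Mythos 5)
First, a point of orientation: Theorem \ref{nec} is not proved in the present paper at all; it is quoted as the main result of \cite{KMP1}, so the comparison has to be with the argument given there. For part (1) your outline is essentially that argument. Restriction to the vertex subalgebra generated by the $J^{\{a\}}$, together with the fact that a unitary highest weight module over an affine Lie algebra must be integrable, gives (a) and (b); the Gram matrix of the states $G^{\{v\}}_{-1/2}u$, decomposed into $\g^\natural$-isotypic components, has its minimal eigenvalue on the component of highest weight $\nu+\xi$, equal to a positive multiple of $\ell_0-A(k,\nu)$, which gives (c); and for extremal $\nu$ that component carries a $\ga^\natural$-singular vector of weight $\nu+\xi$ violating the integrability bound, so it must vanish and equality is forced. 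Two small imprecisions: the Gram operator lives on $\g_{-1/2}\otimes V(\nu)$ (all of the top component, not just the highest weight vector), and the vanishing argument needs the observation that $G^{\{v_\xi\}}_{-1/2}v_{\nu,\ell_0}$ is actually a singular vector for $\ga^\natural$ --- a weight of an integrable module need not be dominant, so it is the singularity, not merely the weight, that kills the state. Neither point is a real obstacle.

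The genuine gap is part (2). What you offer there is a list of candidate strategies --- an orthogonal decomposition into $\ga^\natural$-isotypic blocks, a free-field or Hamiltonian-reduction realization, a deformation in $\ell_0$ --- together with the admission that controlling the form at all conformal weights "requires more." That "more" \emph{is} the sufficiency half of the theorem, and it is the hard half: positivity of the weight-$\tfrac32$ Gram matrix plus $\ga^\natural$-integrability does not formally propagate upward, because the states created by products of $G$-modes are not organized into an orthogonal sum of modules over any subalgebra whose unitary representations are already classified, and the isotypic blocks you propose to exhibit as "manifestly non-negative" are exactly what must be analyzed. That this is not a routine upgrade is shown by the fact that the same method could not be pushed to the extremal boundary $\ell_0=A(k,\nu)$, which remains open for most $\g$ (see the Remark following Corollary \ref{uu}). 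So part (1) of your proposal is a correct reconstruction of the necessity argument of \cite{KMP1}, but part (2) is a plan rather than a proof, and the case-by-case analysis you defer to the end is precisely where the substance of \cite{KMP1} lies.
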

Applying this theorem to $L^W(0,0)=\Ws$ one recovers the unitarity range displayed in Table 1.
\subsection{A technical result} Let $\D$ be the set of roots of $\g$ and let $\Pia=\{\a_0=\d-\theta,\a_1,\ldots\}$ be a set of simple roots of the affine Lie superalgebra $\ga$.
For an isotropic root $\beta\in \Pia$, we denote by  $r_{\be}(\Pia)$ the set of simple  roots in the set $\Da$ of roots of $\ga$ obtained by the corresponding odd reflection. 
We denote by $x_\a$ a root vector of $\ga$, attached to $\a\in \Da$, and by $w.$ the shifted action of $\Wa$: $w.\l= w(\l+\widehat \rho)-\widehat \rho$.
 
  \begin{lemma}\cite[Lemma 11.3]{KMP1} \label{oddref} Let $\widehat \Pi'$ be a set of simple roots for $\Da$. Let $M$ be a $\ga$--module and assume that $m\in M$ is a singular  vector with respect to $\widehat \Pi'$. If $\a\in\widehat \Pi'$ is an isotropic root and  $x_{-\a}m\ne0$, then $x_{-\a}m$ is a singular vector with respect to $r_{\a}(\widehat \Pi')$.
 \end{lemma}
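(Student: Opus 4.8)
The plan is to unwind the definition of a singular vector and reduce to a finite check. Recall that $m$ is singular with respect to a set of simple roots exactly when it is annihilated by the corresponding positive nilpotent subalgebra, and since that subalgebra is generated by the simple root vectors, it suffices that $x_\ggg m=0$ for all $\ggg$ in the given simple system. Thus, to show that $x_{-\a}m$ is singular with respect to $r_\a(\widehat\Pi')$, I would verify that $x_\ggg(x_{-\a}m)=0$ for every $\ggg\in r_\a(\widehat\Pi')$. Recalling the explicit effect of an odd reflection at the isotropic root $\a$, the set $r_\a(\widehat\Pi')$ consists of three types of roots: the root $-\a$; the roots $\be$ with $\be\in\widehat\Pi'$, $\be\ne\a$ and $(\a|\be)=0$; and the roots $\be+\a$ with $\be\in\widehat\Pi'$, $\be\ne\a$ and $(\a|\be)\ne0$ (here $\be+\a\in\Da$ precisely because $\a$ is isotropic and $(\a|\be)\ne0$). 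So there are exactly these three types of generators to dispose of.

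For $\ggg=-\a$: since $\a$ is isotropic, $2\a\notin\Da$, so $[x_{-\a},x_{-\a}]\in\ga_{-2\a}=0$, whence $x_{-\a}^2=0$ and $x_{-\a}(x_{-\a}m)=0$. For $\ggg=\be$ with $(\a|\be)=0$: commuting gives $x_\be x_{-\a}m=\pm x_{-\a}x_\be m+[x_\be,x_{-\a}]m$, where $[x_\be,x_{-\a}]\in\ga_{\be-\a}$; as $\be-\a$ is the difference of two distinct simple roots it is not a root, so the bracket vanishes, while $x_\be m=0$ because $\be$ is a positive root for $\widehat\Pi'$ and $m$ is singular. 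Hence $x_\be(x_{-\a}m)=0$. For $\ggg=\be+\a$ with $(\a|\be)\ne0$: commuting gives $x_{\be+\a}x_{-\a}m=\pm x_{-\a}x_{\be+\a}m+[x_{\be+\a},x_{-\a}]m$; the first summand vanishes since $\be+\a$ is a positive root for $\widehat\Pi'$ and $m$ is singular, and the commutator lies in $\ga_{(\be+\a)-\a}=\ga_\be$, which annihilates $m$ for the same reason, so $x_{\be+\a}(x_{-\a}m)=0$. This exhausts $r_\a(\widehat\Pi')$, so $x_{-\a}m$ is singular with respect to it; the hypothesis $x_{-\a}m\ne0$ ensures that this singular vector is nonzero.

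I expect the only genuine content to be the purely root-theoretic bookkeeping underlying the three vanishing statements: that $2\a$ is not a root (isotropy), that the difference of two distinct simple roots is never a root, and that $\be+\a$ is a root exactly when $(\a|\be)\ne0$. These are the standard combinatorics of odd reflections for basic Lie superalgebras, and they transfer verbatim to the affine root system $\Da$. The super-signs arising when odd and even root vectors are interchanged play no role, since in every case the conclusion is that a term vanishes, so no delicate sign tracking is required.
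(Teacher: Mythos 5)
Your proof is correct: the three-case check over the generators of $r_{\a}(\widehat\Pi')$ (namely $-\a$, the $\be$ orthogonal to $\a$, and the $\be+\a$ with $(\a|\be)\ne0$), together with the vanishing of $x_{-\a}^2$ by isotropy and of $[x_{\be},x_{-\a}]$ because a difference of distinct simple roots is never a root, is exactly the standard odd-reflection argument. The paper does not reprove this lemma but imports it from \cite[Lemma 11.3]{KMP1}, whose proof proceeds along the same lines as yours.
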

\vskip5pt




\section{ Classification of irreducible highest weight  representations of  $V_k(\g)$ in the unitary range}

 Denote by $V^k(\g)$ and $V_k(\g)$ the universal and simple affine vertex algebras of level $k$ associated to $\g$.  Denote by $L(\l)$ the irreducible 
highest weight $V^k(\g)$-module of highest weight $\l$.  For $h\in \C$ and $\nu\in (\h^\natural)^*$, set  
\begin{equation}\label{nuh}\widehat \nu_h=k\L_0+h\theta+\nu.\end{equation}
Note that every highest weight module for  $V^k(\g)$ has highest weight $\widehat \nu_h$ for some $\nu\in   (\h^\natural)^*$ and $h\in\C$.
 \begin{theorem}\label{TA} Let $k$ be in the unitary range. Then, up to isomorphism, the   irreducible highest weight   $V_k(\g)$-modules  are as follows:
 \begin{enumerate}
 \item $L(\widehat\nu_h)$ with $\nu\in P^+_k$ non-extremal and  $h\in\C$ arbitrary;
 \item $L(\widehat\nu_{h})$ with $\nu$ extremal and  $h$ from the set $E_{k,\nu}=\{(\xi|\nu),k+1-(\xi|\nu)\}$.
 \end{enumerate}
 \end{theorem}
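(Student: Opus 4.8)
The plan is to decide, for each $\nu\in(\h^\natural)^*$ and $h\in\C$, whether the irreducible highest weight module $L(\widehat\nu_h)$ descends to the simple vertex algebra $V_k(\g)$, i.e. is annihilated by the maximal graded ideal $\mathcal J$ of $V^k(\g)$, and to play this condition against the representation theory of $\Ws$ through the minimal quantum Hamiltonian reduction functor $H$. I would first recall the two inputs I need about $H$ (Kac--Wakimoto, Arakawa): it is exact, and it sends $L(\widehat\nu_h)$ either to $0$ or to the irreducible highest weight module $L^W(\nu,\ell_0)$ over $\Wu$, where $\ell_0=\ell_0(k,\nu,h)$ is the conformal-weight shift of the reduction, an explicit quadratic polynomial in $h$. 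Since $H$ carries $V_k(\g)$ to $\Ws$, it sends every $V_k(\g)$--module to a $\Ws$--module; hence if $L(\widehat\nu_h)$ is a $V_k(\g)$--module and $H(L(\widehat\nu_h))\neq0$, then $L^W(\nu,\ell_0)$ descends to $\Ws$. Because $\ga^\natural$ acts integrably on $\Ws$ throughout the unitary range (Table 1), its $\gn$--weight $\nu$ must then satisfy $\nu(\theta_i^\vee)\le M_i(k)$ for all $i$, that is $\nu\in P^+_k$.

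The two families in the statement arise by separating the extremal and non-extremal cases. For extremal $\nu$ the module $L^W(\nu,\ell_0)$ can descend to $\Ws$ only when $\ell_0$ equals the boundary value $A(k,\nu)$, the value singled out by the equality clause of Theorem \ref{nec}. Imposing $\ell_0(k,\nu,h)=A(k,\nu)$ is, after an elementary simplification using \eqref{Aknu}, $(\theta|\theta)=2$ and $(\theta|\rho)=h^\vee-1$, equivalent to the quadratic equation
\begin{equation*}
h^2-(k+1)h=(\xi|\nu)\big((\xi|\nu)-k-1\big),
\end{equation*}
whose two roots are $h=(\xi|\nu)$ and $h=k+1-(\xi|\nu)$, i.e. the elements of $E_{k,\nu}$. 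For non-extremal $\nu$, by contrast, $L^W(\nu,\ell_0)$ descends to $\Ws$ for every $\ell_0$, so no condition on $h$ is forced and $h$ ranges over all of $\C$. Thus the dichotomy ``$h$ arbitrary'' versus ``$h\in E_{k,\nu}$'' simply records that the two admissible $h$ in the extremal case are the two solutions of $\ell_0(k,\nu,h)=A(k,\nu)$.

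What remains is the converse — that each listed pair $(\nu,h)$ really yields a $V_k(\g)$--module — together with the cases where the reduction vanishes, $H(L(\widehat\nu_h))=0$, in which $\nu$ cannot be read off from the $W$--side. Here I would argue directly on the affine side that the generators of $\mathcal J$ kill $v_{\widehat\nu_h}$, the natural device being Lemma \ref{oddref}. Starting from $\Pia=\{\a_0=\d-\theta,\a_1,\dots\}$ and applying odd reflections at the isotropic simple roots, in particular at $\a_1$, where $(\a_1)_{|\h^\natural}=-\xi$ by \eqref{a1}, one transports $\widehat\nu_h$ to a set of simple roots in which the relevant defining singular vector of $\mathcal J$ becomes an ordinary $sl(2)$--integrability vector; the requirement that it annihilate the reflected highest weight vector then reproduces exactly $\nu\in P^+_k$ and, in the extremal case, $h\in E_{k,\nu}$. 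The same bookkeeping of the reflected weights $w.\widehat\nu_h$ should show that the $H(L(\widehat\nu_h))=0$ modules already lie in the two families, so nothing is lost or added.

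I expect the extremal case to be the main obstacle, for two independent reasons. First, Lemma \ref{oddref} applies only when $x_{-\a}m\neq0$ at the isotropic root $\a$ used, so the degenerate pairings — those $h$ with $(\widehat\nu_h+\widehat\rho\,|\,\a)=0$ for the relevant $\a\in\Da$ — fall outside its scope and must be handled by hand; it is precisely these special values that should match the endpoints $(\xi|\nu)$ and $k+1-(\xi|\nu)$, and verifying this matching is the delicate computational core. Second, on the $\Ws$--side I need the statement ``$\nu$ extremal $\Rightarrow\ell_0=A(k,\nu)$'' for \emph{all} highest weight $\Ws$--modules, not only the unitary ones covered by Theorem \ref{nec}; I would establish this as a necessary condition by locating the singular vector in the universal module over $\Wu$ that must vanish in the simple quotient at an extremal weight. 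Once the degenerate odd reflections and this $W$--side vanishing are under control, combining the necessity of the first two steps with the sufficiency of the third yields exactly the families (1) and (2).
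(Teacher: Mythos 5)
Your necessity bookkeeping via odd reflections is essentially what the paper does, and your observation that $E_{k,\nu}$ is exactly the solution set of $\ell_0(h)=A(k,\nu)$ is correct (the paper uses it, but only later, in the proof of Theorem \ref{t1}). Nevertheless the proposal has a genuine gap, in two parts. First, the logical order is reversed: you want to feed the constraint ``$\nu$ extremal $\Rightarrow\ell_0=A(k,\nu)$ for every highest weight $\Ws$-module'' back through the reduction functor, but that statement is Theorem \ref{t2} of the paper, proved \emph{after} and \emph{from} Theorem \ref{TA} --- its proof needs the generators of $I^k$ (Theorem \ref{C}), whose derivation in turn rests on Lemma \ref{fund} and the integrability-implies-irreducibility argument of \cite{GK2}. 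Your plan to supply it independently by ``locating the singular vector in the universal module over $\Wu$'' is the content of two further sections of the paper, not a step one can assume here. Moreover $H(L(\widehat\nu_h))=0$ precisely when $k-2h\in\ZZ_{\ge 0}$, so for those $h$ the $W$-side yields no information and the necessity argument has to be run on the affine side anyway, which is what the paper does directly.

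Second, and more seriously, you have no working mechanism for the sufficiency direction. The paper's key input is \cite[Theorem 5.3.1]{GS}, built on the character computations of \cite{GK2}: once $V_k(\g)$ is $\ga^\natural$-integrable (Lemma \ref{fund}), \emph{every} $\ga^\natural$-integrable highest weight $V^k(\g)$-module descends to $V_k(\g)$. This converts the entire classification into a check of $\ga^\natural$-integrability of $L(\widehat\nu_h)$, which the odd-reflection case analysis then carries out. Your substitute --- verify that the generators of the maximal ideal kill $v_{\widehat\nu_h}$ --- does not suffice: you do not yet know generators of that ideal (Proposition \ref{singprime} comes later and itself invokes the Gorelik--Kac argument), and even granting them, a singular vector annihilating the highest weight vector of $L(\widehat\nu_h)$ is not the same as the ideal it generates acting by zero on the whole module; one needs the Zhu-algebra image of the singular vector, or again a character/integrability argument. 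Without the Gorelik--Serganova input or an equivalent, the ``if'' half of the theorem is not established.
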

 \begin{lemma}\label{fund} Let $k$ be in the unitary range.  Then
 $V_k(\g)$ is integrable for $\ga^\natural$.
\end{lemma}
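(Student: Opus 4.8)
The plan is to work with $V_k(\g)$ in its guise as the irreducible highest weight $\ga$--module $L(k\L_0)$ and to verify integrability one simple factor $\ga^\natural_i$ at a time ($i\in S$, as in \eqref{2}). Let $\theta_i$ be the highest root of $\gn_i$. The Chevalley generators of $\ga^\natural_i$ are the zero modes attached to the finite simple roots in $\Pi^\natural$, together with the affine pair $x_{\pm(\delta-\theta_i)}$. I would first dispose of all but one of these: the finite generators act locally nilpotently because $L(k\L_0)$ is $L_0$--graded with finite--dimensional graded components, each a finite--dimensional (hence $\gn$--integrable) module; and $x_{\delta-\theta_i}$, being a positive root vector of $\ga$, is locally nilpotent on the highest weight module. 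Thus, for each $i$, the whole assertion collapses to the local nilpotency of the single operator $x_{\theta_i-\delta}$ (that is, of the current mode $(e_{\theta_i})_{(-1)}$) on $L(k\L_0)$.

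To establish this I would argue by irreducibility and propagation. The root $\delta-\theta_i$ is even (both $\theta_i$ and $\delta$ are even) and real, so $\ad(x_{\theta_i-\delta})$ is locally nilpotent on $\ga$; consequently the set of $v\in L(k\L_0)$ on which $x_{\theta_i-\delta}$ acts locally nilpotently is a $\ga$--submodule. Since $L(k\L_0)$ is irreducible, it is enough to produce one nonzero vector annihilated by some power of $x_{\theta_i-\delta}$. This is where Lemma \ref{oddref} enters: although $\delta-\theta_i$ is not simple for $\Pia$, I would apply a sequence of odd reflections at the isotropic simple root $\a_1$ (recall $(\a_1)_{|\h^\natural}=-\xi$) and its affine companions to reach a base $\widehat\Pi'$ in which $\delta-\theta_i$ lies in the integral even root subsystem. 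By Lemma \ref{oddref} the corresponding highest weight vector $v'$ of $L(k\L_0)$ is nonzero, and since $v'$ is dominant for that integral subsystem, the $sl(2)$--theory for the (now integral) root $\delta-\theta_i$ gives $x_{\theta_i-\delta}^{\,m}v'=0$ for a suitable $m\in\nat$. Feeding this seed into the submodule described above upgrades local nilpotency of $x_{\theta_i-\delta}$ from $v'$ to all of $L(k\L_0)$.

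What must finally be checked is the integrality input, and this is the main obstacle. In the base $\Pia$ one has the clean identity $\langle k\L_0,(\delta-\theta_i)^\vee\rangle=2k/(\theta_i|\theta_i)=M_i(k)-\chi_i$; each odd reflection $r_\be$ sends the highest weight $\Lambda$ to $\Lambda-\be$ and hence shifts this coroot pairing by $-\langle\be,(\delta-\theta_i)^\vee\rangle$, the cumulative shift being governed by $\xi(\theta_i^\vee)=-\chi_i$ (see \eqref{chii}). The upshot is that the pairing reached in $\widehat\Pi'$ is a non-negative integer exactly because $M_i(k)\in\ZZ_+$ in the unitary range, which is guaranteed by Theorem \ref{nec}(1)(a) (equivalently by Table 1). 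The genuinely laborious part is therefore combinatorial and case--by--case over the algebras of Table 2: for each $\g$ one must pin down the precise sequence of odd reflections carrying $\Pia$ to a base in which $\delta-\theta_i$ is integral--dominant, confirm through Lemma \ref{oddref} that the transformed highest weight vector never vanishes en route, and verify that the resulting coroot pairing is the non-negative integer dictated by the $\chi_i$--shift. Running this for every $i\in S$ and assembling the factors yields the $\ga^\natural$--integrability of $V_k(\g)$.
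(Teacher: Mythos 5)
Your proposal follows essentially the same route as the paper's proof: reduce integrability to local nilpotency of the single operator $(x_{\theta_i})_{(-1)}$, pass via the odd reflections $r_{\a_0+\a_1}r_{\a_1}$ and Lemma \ref{oddref} to a base $\widehat\Pi'$ in which $\d-\theta_i$ is simple, check that the new highest weight pairs with $(\d-\theta_i)^\vee$ to the non-negative integer $M_i(k)$, and conclude by $sl(2)$-theory together with propagation of local nilpotency from a generating vector. The one point you gloss over is that the nonvanishing of the reflected vector $x_{-(\a_0+\a_1)}\vac$ in the simple quotient is not supplied by Lemma \ref{oddref} (which assumes nonvanishing as a hypothesis) but by the computation $(x_{-\theta+\a_1})_{(1)}(x_{\theta-\a_1})_{(-1)}\vac=k(x_{-\theta+\a_1}|x_{\theta-\a_1})\vac\ne 0$, which uses $k\ne 0$; note also that the first reflection $r_{\a_1}$ fixes the highest weight vector because $(k\L_0|\a_1)=0$, so the weight shifts only by $\a_0+\a_1$ and the resulting pairing is exactly $M_i(k)$, not $M_i(k)+\chi_i$ (the latter could fail to be non-negative, e.g.\ for $spo(2|3)$ with $M_1(k)=1$).
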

\begin{proof}[Proof of Lemma \ref{fund}]It is enough to check that $(x_{\theta_i})_{(-1)}^N\vac=0$ for some $N\in\nat$ and all $i$. 
Using Table 2 one easily checks that $\theta-\a_1$ is a root of $\g$. Since $(x_{\a_1})_{(0)}\vac=0$, $\vac$ is a singular vector in $V^k(\g)$ also for $r_{\a_1}(\widehat\Pi)$. Since $(x_{\theta-\a_1})_{(-1)}\vac$ is nonzero in $V^k(\g)$, it follows from Lemma \ref{oddref} that $(x_{\theta-\a_1})_{(-1)}\vac$ is a  singular vector in $V^k(\g)$ for $\widehat\Pi'=r_{\a_0+\a_1}r_{\a_1}(\widehat\Pi)$. Note that, since $k\ne 0$, \begin{align*}(x_{-\theta+\a_1})_{(1)} (x_{\theta-\a_1})_{(-1)}\vac&=
[(x_{-\theta+\a_1})_{(1)}, (x_{\theta-\a_1})_{(-1)}]\vac=((h_{\th-\a_1})_{(0)}+k(x_{-\theta+\a_1}|x_{\theta-\a_1}))\vac\\&=k(x_{-\theta+\a_1}|x_{\theta-\a_1})\vac = \gamma \vac,\quad\gamma\ne 0.\end{align*}
In particular, $(x_{\theta-\a_1})_{(-1)}\vac$  generates $V^k(\g)$.

 Let $\L'=k\L_0-\a_0-\a_1=k\L_0-\d+\theta-\a_1$ be the weight of $(x_{\theta-\a_1})_{(-1)}\vac$. By a case-wise verification one shows that the roots  $\eta_i:=\d-\theta_i$ are   in $\widehat\Pi'$ and,  by \eqref{MIK} and \eqref{chii},
$$
(\L'|\eta_i^\vee)=\frac{2}{(\theta_i|\theta_i)}k+(\a_1|\theta_i^\vee)=M_i(k)-\chi_i-(\xi|\theta_i^\vee)
=M_i(k).
$$
Hence the vector $(x_{\theta_i})_{(-1)}^{M_i(k)+1}(x_{\theta-\a_1})_{(-1)}\vac$ is singular  in $V^k(\g)$ for $\widehat\Pi'$. Since the vector $(x_{\theta-\a_1})_{(-1)}\vac$ generates $V^k(\g)$, it follows that $V_k(\g)$ is $\widehat \g^\natural$--integrable.
\end{proof}
\begin{proof}[Proof of Theorem \ref{TA}] 
 In \cite[Sections 4 and 6]{GK2} the characters of highest weight $\ga$-modules with highest weight $k\L_0$ have been computed using only their integrability with respect to $\ga^\natural$, which implies that such modules are irreducible. It was deduced from this in  \cite[Theorem 5.3.1]{GS} that, if  $V_k(\g)$  is integrable as a $\ga^\natural$-module,  then  the $V^k(\g)$--modules,  which are integrable over $\ga^\natural$,
descend to $V_k(\g)$. \par
By  Lemma \ref{fund}, we are left with proving  that  the  modules listed in (1), (2)  are  $\ga^\natural$--integrable. Let $v$ be a highest weight vector for $L(\widehat\nu_h)$.

{\sl Case (1):} $\nu$ is not extremal, $h\in\C$. If $(\widehat \nu_{h}|\a_1)\ne 0$ and $(\widehat \nu_{ h}-\a_1|\a_0+\a_1)\ne 0$, then $(x_{\theta-\a_1})_{(-1)}(x_{-\a_1})_{(0)}v$ is a singular vector with respect  to the set of simple roots $r_{\a_0+\a_1}r_{\a_1}(\Pia)$. 
 Moreover, by \eqref{MIK},\eqref{chii}, and \eqref{a1} we have for $i\in S$
\begin{align}
m_i:= (\widehat\nu_{ h}-2\a_1-\a_0|\eta_i^\vee)&=M_i(k)-\chi_i+2(\a_1|\theta_i^\vee)-(\nu|\theta_i^\vee)\notag\\
 &=M_i(k)-\chi_i-2(\xi|\theta_i^\vee)-(\nu|\theta_i^\vee)\notag\\
 &=M_i(k)+\chi_i-(\nu|\theta_i^\vee)\in\ZZ_+\label{mik}
\end{align}
by \eqref{extr}. Since $\eta_i\in r_{\a_0+\a_1}r_{\a_1}(\Pia)$, we see that,  in $L(\widehat\nu_{ h})$,
$$(x_{\theta_i})_{(-1)}^{m_i+1}(x_{\theta-\a_1})_{(-1)}(x_{-\a_1})_{(0)}v=0,
$$
hence $L(\widehat \nu_{ h})$ is $\ga^\natural$--integrable.

If $(\widehat \nu_{ h}|\a_1)= 0$, since $(\widehat  \nu_{ h}|\a_0+\a_1)=(\widehat  \nu_{ h}|\a_0)\ne0$, we have that $(x_{\theta-\a_1})_{(-1)}v$ is a singular vector for $r_{\a_0+\a_1}r_{\a_1}(\widehat\Pi)$.
Moreover,
\begin{align}\label{ne1}
(\widehat\nu_{ h}-\a_1-\a_0|\eta_i^\vee)=M_i(k)-(\nu|\theta_i^\vee)\in\ZZ_+
\end{align}
since $\nu\in P^+_k$. We can therefore conclude as above that  $L(\widehat \nu_{ h})$ is $\ga^\natural$--integrable.

Finally, if $(\widehat \nu_{ h}|\a_1)\ne 0$ and $(\widehat \nu_{ h}-\a_1|\a_0+\a_1)= 0$, then $(x_{-\a_1})_{(0)}v$ is a singular vector for $r_{\a_0+\a_1}r_{\a_1}(\widehat\Pi)$.
Moreover,
\begin{align}\label{ne2}
(\widehat\nu_{ h}-\a_1|\eta_i^\vee)=M_i(k)-(\nu|\theta_i^\vee)\in\ZZ_+,
\end{align}
and we conclude as in the previous case.\par
{\sl Case (2):} $\nu$ is extremal, $ h\in E_{k,\nu}$. 
If $ h=(\xi|\nu)$, then 
 $(\widehat\nu_{ h}|\a_1)=0$.  We  can then conclude as in the non-extremal case. 
 If $ h=k+1-(\xi|\nu)$,
we may assume that $(\widehat\nu_{h}|\a_1)\ne 0$.  Then, since  $(\theta|\alpha_1)=1$, we have
\begin{align*}
(\widehat\nu_{ h}-\a_1|\a_0+\a_1)&=(k\L_0+(k+1-(\xi|\nu))\theta+\nu-\a_1|\a_0+\a_1)\\
&=k-2(k+1-(\xi|\nu))+1+(k+1-(\xi|\nu))+(\nu|\a_1)=0.
\end{align*}
We can therefore conclude as in the non-extremal case.\par
We are left with proving that any irreducible highest weight module for $V_k(\g)$ is  of the form (1) or (2). Let $L(\widehat \nu_h)$ be an  irreducible highest weight $V_k(\g)$-module. We prove that,  necessarily,  $\nu\in P^+_k$. Indeed, the action of $\g^\natural$ on a highest weight vector $v$ should be locally finite, so that $\nu\in P^+$. 
If $h\notin E_{k,\nu}$ then 
$(\widehat \nu_{h}|\a_1)\ne 0$ and $(\widehat \nu_{ h}-\a_1|\a_0+\a_1)\ne 0$. 
It follows that  $(x_{\theta-\a_1})_{(-1)}(x_{-\a_1})_{(0)}v$ is a highest weight vector with respect to the set of simple roots $r_{\a_0+\a_1}r_{\a_1}(\Pia)$ of 
highest weight $\L'=\widehat\nu_h-\a_0-2\a_1$. If $L(\widehat \nu_h)$ is integrable with respect to $\ga^\natural$, 
the computation done in \eqref{mik} shows that 
$m_i$ should be a non-negative integer, hence
$$(\nu|\theta_i^\vee)\leq M_i(k)+\chi_i\leq M_i(k).$$
It follows that $\nu\in P^+_k$ and it is not extremal, i.e. $ L(\widehat \nu_h)$ is of type (1).\par
If $h=(\xi|\nu)$ (resp. $h=k+1-(\xi|\nu)$), then $(\widehat \nu_{ h}|\a_1)= 0$ (resp.  $(\widehat \nu_{ h}|\a_0+\a_1)= 0$ ) and as in \eqref{ne1} (resp. \eqref{ne2})  we get that $\nu\in P^+_k$. In particular, $ L(\widehat \nu_h)$ is of type (1) if $\nu$ is not extremal and of type (2) if $\nu$ is extremal.
 \end{proof}

 \begin{remark} \label{non-wh-1} All modules listed in Theorem \ref{TA} are of positive energy (the definition is recalled in Section \ref{Section 6}),  but there might exist positive energy $V_k(\g)$--modules outside of this list. In \S\ 7.1 we present arguments for this  claim in the case 
 $\g = spo(2 \vert 3)$, $k =- m/4$, and $m\ge 4$ even.\par One of the referees pointed out that in \cite[5.6.4]{GS}  the authors give an example of an irreducible positive energy module which is not a highest weight module.  Similar examples,  with $\g$ as in Table 1  and $k$ in the unitarity range,  are given in  \cite[5.6.6]{GS} (note that in \cite{GS} a  normalization of the bilinear form different from ours is used).

Let us explain what happens in our situation. If $M$ is  a positive energy module for a vertex algebra $V$, we set  
$M_{top}=Zhu(M)$, where $Zhu( - )$ is the Zhu functor between  positive energy  modules and  modules for the  Zhu algebra
$A(V)$.  We  start with the $V_k(\g)$--module
$L(\widehat\nu_h)$ with $\nu\in P^+_k$ non-extremal and  $h\in\C$ arbitrary, from Theorem \ref{TA}. Then $L(\widehat\nu_h)_{top} $ contains an irreducible   $\g_{\bar 0}$--submodule $E= U(\g_{\bar 0}). v$, where $v$ is the  highest weight vector of  $L(\widehat\nu_h)$.
Next we consider the   Kac module $E_1$, which is a suitable quotient of  $Ind_{\g_{\bar 0}} ^{\g} E$. As mentioned in \cite{GS}, if all $M_i(k)$ are  large enough, we get that $E_1$ is a module for Zhu's algebra $A(V_k(\g))$.  Using Zhu's functor we conclude the following:
\begin{itemize}
\item there exists a positive energy  $V_k(\g)$--module  $\widetilde {L}(\widehat\nu_h)$ such that $\widetilde {L}(\widehat\nu_h)_{top} = E_1$.
\item the module $\widetilde {L}(\widehat\nu_h)$ is indecomposable and  has an irreducible subquotient isomorphic to $L(\widehat\nu_h)$.

\end{itemize}
Hence  non-highest weight positive energy modules for $V_k(\g)$ exist. But still these modules are weight modules with finite-dimensional weight spaces.

Let us mention one important consequence. Take $h \in {\C}$ such that $E$ is finite-dimensional, irreducible module and assume that $M_i(k)$ are large enough. Then $E_1$ is an indecomposable  finite-dimensional module for Zhu's algebra  $A(V_k(\g))$, and therefore the category $KL_k$ \cite{AMP22} is not semisimple. 
In the  case $\g = spo(2 \vert 3)$, one can show that the category $KL_{-m/4}(\g)$ is not semisimple for $ m \ge 4$.

\end{remark}

\section{Explicit description of the maximal ideal of $W^k_{\min}(\g)$}

Let, as before, $\Pi=\{\a_1,\ldots\}$ be the set of simple roots for $\g$ given in Table 2, and  $I^k$ be the maximal ideal of $W^k_{\min}(\g)$.  Also   denote by $J^k$ the maximal ideal of $V^k(\g)$. Set 
\begin{equation}\label{vvi}
v_i= (J^{\{x_{\theta_i}\}}_{(-1)})^{M_i(k)+1}\vac,\quad i \in S.
\end{equation}
 If an irreducible highest weight   $W^k_{\min}(\g)$--module $L^W(\nu,\ell_0)$ is   unitary, then, 
restricted to the affine subalgebra $V^{\beta_k}(\g^\natural)$ (see \cite[(7.4), (7.5)]{KMP1} for the definition of this subalgebra), it is unitary, hence a direct sum of irreducible
integrable highest  weight  $\widehat \g^\natural$--modules of levels $M_i(k),\  i\in S$.
But it is well-known \cite{FZ}, \cite{KWa} that all these modules descend to  the simple affine vertex algebra $V_{\beta_k}(\g^\natural)$, and  are annihilated 
by the elements $v_i$.
In particular, applying this argument to $\Ws=L^W(0,0)$, we deduce that 
\begin{equation}\label{vinik}v_i\in I^k.\end{equation}
\begin{theorem}\label{C}  Let $k$ be in the unitary range. The maximal ideal  $I^k$ is generated by the singular vector 
\begin{equation}\label{tv1}\widetilde v_1=
 (J^{\{x_{\theta_1}\}}_{(-1)} )^{M_1(k)-1}  G^{\{x_{-\a_1}\}}_{(-1)}\vac\end{equation} if $\g= spo(2|3)$ and  by the singular vectors 
$v_i,\ i\in S$ (cf. \eqref{vvi}) in the other cases.
\end{theorem}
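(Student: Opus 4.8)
The plan is to establish the two inclusions separately. For the inclusion $\langle v_i : i\in S\rangle\subseteq I^k$ (resp. $\langle\widetilde v_1\rangle\subseteq I^k$ when $\g=spo(2|3)$) I would invoke \eqref{vinik}, which already places each $v_i$ in $I^k$; in the $spo(2|3)$ case I would additionally check that $\widetilde v_1$ is a singular vector lying in $I^k$ and that $v_1\in\langle\widetilde v_1\rangle$. The appearance of the odd field $G^{\{x_{-\a_1}\}}$ in $\widetilde v_1$ is explained by \eqref{a1}: since $(\a_1)_{|\h^\natural}=-\xi$, the vector $x_{-\a_1}$ lies in $\g_{-1/2}$, so $\widetilde v_1$ sits at the extremal threshold $M_1(k)+\chi_1=M_1(k)-2$ rather than at $M_1(k)$, and Lemma \ref{oddref} (odd reflection along the isotropic root $\a_1$) is what relates $\widetilde v_1$ and $v_1$.

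For the reverse inclusion I would pass to the affine side through the minimal quantum Hamiltonian reduction functor $H$. First I would pin down the generators of the maximal ideal $J^k$ of $V^k(\g)$. By Lemma \ref{fund}, $V_k(\g)$ is $\ga^\natural$-integrable; let $\widetilde J^k$ be the ideal generated by the affine integrability relations $(x_{\theta_i})_{(-1)}^{k_i+1}\vac$ for the highest roots $\theta_i$, where $k_i=M_i(k)-\chi_i$ is the level of $\ga^\natural_i$ inside $V^k(\g)$ (note that for the finite simple roots the corresponding generators already kill $\vac$, so these are the only relations needed to make the vacuum module integrable). Then $V^k(\g)/\widetilde J^k$ is $\ga^\natural$-integrable, so by the descent result \cite[Theorem 5.3.1]{GS} it is a $V_k(\g)$-module; being cyclic on the image of $\vac$, this forces $J^k\subseteq\widetilde J^k$, whence $J^k=\widetilde J^k$.

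Next I would use that in the minimal setting $H$ is exact (its cohomology is concentrated in degree zero), that $H(V^k(\g))=\Wu$, and that — because $k$ is non-collapsing — $H(V_k(\g))=\Ws$ is the simple $W$-algebra, in particular nonzero. Applying $H$ to $0\to J^k\to V^k(\g)\to V_k(\g)\to 0$ then gives $I^k=H(J^k)$. The heart of the theorem is that $H$ carries the generating set of $J^k$ to a generating set of $I^k$: the image $H\big((x_{\theta_i})_{(-1)}^{k_i+1}\vac\big)$ is, up to lower-order terms, a power of $J^{\{x_{\theta_i}\}}_{(-1)}$ applied to $\vac$, and the level shift by $\chi_i$ inherent in $H$ converts the exponent $k_i+1$ into $M_i(k)+1$, producing $v_i$. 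For $\g\ne spo(2|3)$ this yields $I^k=\langle v_i : i\in S\rangle$ directly; for $\g=spo(2|3)$ the same mechanism, combined with the odd-reflection step of the first paragraph, instead produces $\widetilde v_1$ as the genuine lowest generator.

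The main obstacle is twofold. First, exactness of $H$ only gives $I^k=H(J^k)$ as $\Wu$-submodules; to conclude that $I^k$ is generated by the images of the chosen generators of $J^k$ one must actually compute $H$ on the explicit affine singular vectors $(x_{\theta_i})_{(-1)}^{k_i+1}\vac$, verify that none of them is sent to $0$, and check that the image is exactly $v_i$ with the correct exponent after the $\chi_i$-shift; this reduction computation is the technical core. Second, the case $\g=spo(2|3)$ is genuinely exceptional because $\chi_1=-2$: here the reduction lands at the extremal threshold and unavoidably involves the $G$-field, so one must show by the odd-reflection analysis that $\widetilde v_1$ — rather than $v_1$ — generates the ideal, and in particular that $v_1\in\langle\widetilde v_1\rangle$. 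I expect the bookkeeping of the $\chi_i$-shift under $H$ together with this $spo(2|3)$ analysis to be the crux of the argument.
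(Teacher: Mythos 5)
Your reduction of the problem to the affine side is sound in outline, and your identification of the maximal ideal $J^k$ of $V^k(\g)$ is essentially correct: the vectors $(x_{\theta_i})_{(-1)}^{M_i(k)-\chi_i+1}\vac$ do generate $J^k$ by the integrability-plus-descent argument you sketch (the paper itself uses this fact for $\g=spo(2|3)$ in Section 7). The genuine gap is in the transfer from $J^k$ to $I^k$. Exactness of $H_0$ gives $I^k=H_0(J^k)$ as a module, but to extract generators of $I^k$ you need $J^k$ to be generated by vectors that are \emph{singular with respect to the triangular decomposition to which $H_0$ is adapted} (essentially $\widehat\Pi$), because only then is $H_0(J^k)$ a (sum of) highest weight $\Wu$-modules with highest weights computable from \cite[(2.19)]{KRW}. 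Your generators are singular only with respect to a different set of simple roots $\widehat\Pi''$ in which $\theta_i$ is the highest root, and — more seriously — your proposed mechanism, a ``level shift by $\chi_i$ converting the exponent $k_i+1$ into $M_i(k)+1$,'' does not exist: $H_0$ preserves the $\h^\natural$-weight of a singular vector, and your generator has $\h^\natural$-weight $(M_i(k)-\chi_i+1)\theta_i=(M_i(k)+2)\theta_i$ (for $\chi_i=-1$), whereas $v_i$ has weight $(M_i(k)+1)\theta_i$ and conformal weight $M_i(k)+1$; a highest weight $\Wu$-module generated in weight $(M_i(k)+2)\theta_i$ could not even contain $v_i$. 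This is exactly why the paper constructs the more elaborate singular vectors $s_i=(x_{\a_1})_{(0)}(x_{\theta_i})_{(-1)}^{M_i(k)+1}(x_{\theta-\a_1})_{(-1)}\vac$: these are singular for $\widehat\Pi$ itself, have $\h^\natural$-weight exactly $(M_i(k)+1)\theta_i$, and generate $J^k$ (Proposition \ref{singprime}); only then does the weight comparison force $v_i$ to be the highest weight vector of $H_0(J^k)$. Producing such $\widehat\Pi$-adapted singular generators (via the odd reflections of Lemma \ref{oddref} and the vanishing of $u_i$) is the technical core that your plan replaces with an incorrect heuristic.

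Two further points are missing. For $\g=D(2,1;\tfrac{m}{n})$ one has $|S|=2$ and $H_0(J^k)$ is a sum of two highest weight modules; one must still show that $v_1$ does not lie in the ideal generated by $v_2$ (the paper does this by a weight-lattice argument in $\D_W$), otherwise the stated generating set could be redundant. For $\g=spo(2|3)$ you correctly sense that $\chi_1=-2$ makes the case exceptional, but the actual argument is not an ``odd reflection relating $\widetilde v_1$ and $v_1$'': on the affine side the singular generator of $J^k$ is $r_1=(x_{\a_1})_{(0)}(x_{-\theta+\a_1})_{(1)}w_1$ (here $u_1\neq 0$, unlike the other cases), and on the $W$-side one compares the weight $(2(m-2)\omega_1,m-\tfrac32)$ of the resulting generator of $I^k$ with that of $\widetilde v_1$ and uses the explicit relations $(G^{\{x_{-\a_1}\}})_{(0)}\widetilde v_1=c_1v_1$ and $(G^{\{x_{-\xi}\}})_{(1)}v_1=c_2\widetilde v_1$ with $c_1,c_2\neq0$ to place $\widetilde v_1$ in $I^k$ and conclude it is singular and generating. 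None of these computations is supplied or correctly anticipated by your plan, so as it stands the proposal does not establish the theorem.
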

We split the proof according to whether $\g\ne spo(2|3)$ or $\g= spo(2|3)$.
\subsection{$\g\ne spo(2|3)$} In this case $\chi_i=-1,\, i\in S$.

\begin{lemma}\label{xthetaM}Set $\eta_i=\d-\theta_i,\, i\in S$.  One has 
\begin{align}&\dim V^k(\g)_{k\L_0-j\eta_i}=1\text{ for all $j\ge 0$}, \label{nil}\\
\label{inl}&\dim V_k(\g)_{k\L_0-j\eta_i}=1 \text{ if and only if } j\le s, \end{align}
where 
\begin{equation}\label{s} s=(k\L_0|\eta_i^\vee).\end{equation}
In particular, for each $M\in\Z_{\ge 0}$, the vector  $(x_{-\theta+\a_1})_{(1)}(x_{\theta_i})_{(-1)}^{M}(x_{\theta-\a_1})_{(-1)}\vac$ is a multiple of $(x_{\theta_i})_{(-1)}^{M}\vac$. \end{lemma}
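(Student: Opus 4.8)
The plan is to prove \eqref{nil} by a weight--multiplicity count controlled by the eigenvalues of $\theta_i^\vee$, then to derive \eqref{inl} from \eqref{nil} together with the $\ga^\natural$--integrability of $V_k(\g)$ supplied by Lemma \ref{fund}, and finally to read off the last assertion from \eqref{nil}. The key preliminary observation is that every weight vector of $\g$ has $\theta_i^\vee$--eigenvalue at most $2$, with equality only on $\C x_{\theta_i}$. To see this I would decompose $\g$ as a $\gn_i$--module: the summands $\C e,\C f,\C x$ and the other simple ideals $\gn_{i'}$ ($i'\ne i$, if any) are trivial and so are killed by $\theta_i^\vee$; on the adjoint summand $\gn_i$ the coroot $\theta_i^\vee$ has eigenvalues in $\{-2,-1,0,1,2\}$ and attains $2$ only on the highest root space $\C x_{\theta_i}$, since $\theta_i$ is the highest root of $\gn_i$; and on $\g_{\pm1/2}\cong V_\xi$ the maximal eigenvalue is $\langle\xi,\theta_i^\vee\rangle=-\chi_i=1$, because $\g\ne spo(2|3)$.

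With this in hand, \eqref{nil} follows by PBW. A PBW monomial $a^1_{(-n_1)}\cdots a^r_{(-n_r)}\vac$ (with $n_l\ge 1$ and $a^l$ weight vectors of $\g$) lies in $V^k(\g)_{k\L_0-j\eta_i}$ precisely when $\sum_l n_l=j$ and its $\h$--weight is $j\theta_i$. Pairing the weight condition with $\theta_i^\vee$ gives $\sum_l\langle\mathrm{wt}(a^l),\theta_i^\vee\rangle=2j$, while the energy condition forces $r\le j$; since each summand is at most $2$, all the estimates must be equalities, so $r=j$, every $n_l=1$, and every $a^l=x_{\theta_i}$. Hence the only such monomial is $(x_{\theta_i})_{(-1)}^{j}\vac$, which is nonzero because $x_{\theta_i}$ is even, proving \eqref{nil}.

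To prove \eqref{inl} I would use that, by Lemma \ref{fund}, $V_k(\g)$ is $\ga^\natural$--integrable; as $\eta_i=\d-\theta_i$ is the affine simple root of $\widehat{\gn_i}\subset\ga^\natural$, the module $V_k(\g)$ is in particular integrable for the $sl(2)$--triple $\{(x_{-\theta_i})_{(1)},\,\eta_i^\vee,\,(x_{\theta_i})_{(-1)}\}$. The image of $\vac$ is a highest weight vector for this $sl(2)$ of weight $s=(k\L_0|\eta_i^\vee)$, with lowering operator $(x_{\theta_i})_{(-1)}$, so in the integrable module $(x_{\theta_i})_{(-1)}^{j}\vac\ne 0$ if and only if $j\le s$. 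Since by \eqref{nil} the weight space $V^k(\g)_{k\L_0-j\eta_i}$ is spanned by $(x_{\theta_i})_{(-1)}^{j}\vac$, passing to the quotient gives \eqref{inl}. Finally, the vector $(x_{-\theta+\a_1})_{(1)}(x_{\theta_i})_{(-1)}^{M}(x_{\theta-\a_1})_{(-1)}\vac$ has weight $k\L_0-M\eta_i$ --- indeed $(x_{\theta-\a_1})_{(-1)}\vac$ has weight $k\L_0-\a_0-\a_1$, the factor $(x_{\theta_i})_{(-1)}^{M}$ subtracts $M\eta_i$, and $(x_{-\theta+\a_1})_{(1)}$ adds $\a_0+\a_1$ --- so by \eqref{nil} it is a scalar multiple of $(x_{\theta_i})_{(-1)}^{M}\vac$.

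The main obstacle is the preliminary eigenvalue bound: it rests on identifying the $\gn_i$--module structure of $\g$ and on the two facts that the highest root of $\gn_i$ yields the eigenvalue $2$ uniquely and that $\langle\xi,\theta_i^\vee\rangle=1$ throughout the range $\g\ne spo(2|3)$. Once this bound is secured the remaining combinatorics and the $sl(2)$--string argument are forced, and the case $\g=spo(2|3)$, where $\langle\xi,\theta_i^\vee\rangle=2$ breaks the bound, is exactly the one excluded from this subsection and treated separately.
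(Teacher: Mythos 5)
Your proof is correct, and for \eqref{nil} it takes a genuinely different route from the paper's. The paper passes to a new set of simple roots $\Pi''$ of $\g$ in which $\theta_i$ is the highest root, so that $\eta_i=\d-\theta_i$ becomes a simple root of $\widehat\Pi''$ and $\vac$ remains a singular vector for $\widehat\Pi''$; the containment $V^k(\g)_{k\L_0-j\eta_i}\subseteq\C\,(x_{\theta_i})_{(-1)}^j\vac$ is then the standard fact about weight spaces on a simple-root string below the highest weight of a highest weight module. You instead establish the same containment by a direct PBW count, using the bound that $\theta_i^\vee$ has eigenvalue at most $2$ on $\g$, attained only on $\C x_{\theta_i}$, which you justify by decomposing $\g$ as a $\g^\natural_i$-module and using $(\xi|\theta_i^\vee)=-\chi_i=1$. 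Your route is more computational but self-contained: it does not require producing a simple root system with $\theta_i$ maximal, and it makes transparent exactly where $\g=spo(2|3)$ fails (there $(\xi|\theta_i^\vee)=2$, so the eigenvalue $2$ is also attained on $\g_{\pm1/2}$ and the weight space acquires extra PBW monomials), which is precisely the case excluded from this subsection. The remaining steps --- deducing \eqref{inl} from Lemma \ref{fund} via the $sl(2)$-string through $\vac$ attached to the affine root $\eta_i$, and obtaining the final claim by comparing weights against \eqref{nil} --- coincide with the paper's argument.
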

\begin{proof}
Any basic Lie superalgebra $\g$ different form $spo(2|3)$ admits a set of simple roots $\Pi''$ with the property that $\theta_i$ is the maximal root, hence the root $\eta_i=\d-\theta_i$ lies in $\widehat\Pi''$. Since $V^k(\g)$ is the vacuum module, $x_\a\vac=0$  for any root $\a\in\D$, in particular $x_\a\vac=0$ for any $\a\in \Pi''$. Since $(x_{-\theta_i})_{(1)}\vac=0$, $\vac$   is a singular vector also for $\widehat\Pi''$.
Since $\eta_i$ is simple in $\widehat\Pi''$, it is clear that  $V^k(\g)_{k\L_0-j\eta_i}\subset \C (x_{\theta_i})_{(-1)}^j\vac$, and indeed $\dim V^k(\g)_{k\L_0-j\eta_i}=1$, since $(x_{\theta_i})_{(-1)}^j\vac\ne 0$ in $V^k(\g)$. This proves \eqref{nil}. By Lemma \ref{fund}, $V_k(\g)$ is integrable for $\ga^\natural$, in particular, 
 $(x_{\theta_i})_{(-1)}^{j}\vac=0$ if and only if  $j>(k\L_0|\eta_i^\vee)$, hence  \eqref{inl} holds.
 The final claim follows from \eqref{nil} by comparing weights.
\end{proof}
Introduce the following vectors in $V^k(\g)$, where $i\in S$:
\begin{align}
w_i&=(x_{\theta_i})_{(-1)}^{M_i(k)+1}(x_{\theta-\a_1})_{(-1)}\vac,\\
s_i&=(x_{\a_1})_{(0)}w_i,\\
u_i&=(x_{-\theta+\a_1})_{(1)}w_i.
\end{align}
\begin{prop}\label{singprime} The vectors $s_i$ 
are singular in the universal affine vertex algebra $V^k(\g)$ and generate $J^k$.
\end{prop}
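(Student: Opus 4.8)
The plan is to prove the two assertions separately, taking as the only external inputs the facts, extracted from the proof of Lemma \ref{fund}, that $w_i$ is singular with respect to $\Pia'=r_{\a_0+\a_1}r_{\a_1}(\Pia)$ and that $\vac':=(x_{\theta-\a_1})_{(-1)}\vac$ is singular for $\Pia'$ and cyclic in $V^k(\g)$. Since each of the two odd reflections $r_{\a_1},r_{\a_0+\a_1}$ flips the sign of a single root, one has $\Da^+(\Pia')=(\Da^+(\Pia)\setminus\{\a_1,\a_0+\a_1\})\cup\{-\a_1,-(\a_0+\a_1)\}$; in particular every simple root $\be\in\Pia$ with $\be\ne\a_1$ (including $\a_0$) still lies in $\Da^+(\Pia')$, so $e_\be w_i=0$, while $e_{\a_1}w_i=s_i$ by definition.

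First I would show that $s_i=(x_{\a_1})_{(0)}w_i$ is annihilated by every Chevalley generator $e_\be$, $\be\in\Pia$, so that $s_i$ is singular for the standard $\Pia$. For $\be=\a_j$ with $j\ge1$ one moves $e_{\a_j}$ past $(x_{\a_1})_{(0)}$: the commutator term is a multiple of $e_{\a_j+\a_1}w_i=0$ (as $\a_j+\a_1\in\Da^+(\Pia')$ when it is a root), and the remaining term vanishes because $e_{\a_1}w_i$ appears only through $e_{\a_1}e_{\a_j}w_i=0$ for $j\ne1$ and through $e_{\a_1}^2=0$ for $j=1$ ($\a_1$ being isotropic). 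The one delicate case is $\be=\a_0$: here $e_{\a_0}=(x_{-\theta})_{(1)}$ and $[e_{\a_0},(x_{\a_1})_{(0)}]$ is a nonzero multiple of $(x_{-\theta+\a_1})_{(1)}$, so, using $e_{\a_0}w_i=0$, the vector $e_{\a_0}s_i$ is a nonzero multiple of $u_i=(x_{-\theta+\a_1})_{(1)}w_i$. Thus the singularity of $s_i$ reduces to the identity $u_i=0$ in $V^k(\g)$.

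To prove $u_i=0$ I would use Lemma \ref{xthetaM}, which gives $u_i=c_i\,v_i$ with $c_i\in\C$ and $v_i=(x_{\theta_i})_{(-1)}^{M_i(k)+1}\vac$, and then determine $c_i$ by passing to $V_k(\g)$. On one hand $w_i$ vanishes in $V_k(\g)$ (because $\vac'$ is an $\eta_i$--highest weight vector of weight $M_i(k)$, where $\eta_i=\d-\theta_i$, and $V_k(\g)$ is $\ga^\natural$--integrable by Lemma \ref{fund}), hence $u_i$ vanishes in $V_k(\g)$ and $c_i$ times the image of $v_i$ is zero there. On the other hand the image of $v_i$ in $V_k(\g)$ is \emph{nonzero}: by \eqref{MIK} and $\chi_i=-1$ one has $(k\L_0|\eta_i^\vee)=2k/(\theta_i|\theta_i)=M_i(k)+1$, so $(x_{\theta_i})_{(-1)}^{M_i(k)+1}\vac$ survives in the $\eta_i$--integrable module $V_k(\g)$. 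Therefore $c_i=0$ and $u_i=0$, which finishes the proof of singularity. For generation I would then establish $\langle s_i\rangle=\langle w_i\rangle=J^k$: the first equality follows from $s_i=(x_{\a_1})_{(0)}w_i$ together with the reverse relation $(x_{-\a_1})_{(0)}s_i=(\mathrm{wt}\,w_i\,|\,\a_1)\,w_i$, whose scalar $1-(M_i(k)+1)(\theta_i|\theta_i)/2$ is nonzero throughout the unitary range (a short case-check), so that $w_i\in\langle s_i\rangle$; the second equality uses that $V^k(\g)=U(\ga)\vac'$ is a highest weight module for $\Pia'$ of highest weight $\L'=k\L_0-\a_0-\a_1$ with $(\L'|\eta_i^\vee)=M_i(k)$, for which the $w_i$ are precisely the affine integrability singular vectors. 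Since the horizontal $\gn$--action on $V^k(\g)$ is automatically locally finite, the quotient $V^k(\g)/\langle w_i\rangle$ is $\ga^\natural$--integrable, hence irreducible and equal to $V_k(\g)$ by the character/irreducibility results of \cite{GK2,GS} invoked in Theorem \ref{TA}; thus $\langle w_i\rangle=J^k$.

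I expect the identity $u_i=0$ to be the main obstacle: it is exactly here that the arithmetic of the unitary range enters, through $(k\L_0|\eta_i^\vee)=M_i(k)+1$ (equivalently $\chi_i=-1$), which keeps $v_i$ alive in $V_k(\g)$ and forces the proportionality constant of Lemma \ref{xthetaM} to vanish. All the remaining ingredients are either routine $\la$--bracket computations or direct applications of Lemmas \ref{oddref}, \ref{fund} and \ref{xthetaM}.
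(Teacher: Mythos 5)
Your proof is correct and follows essentially the same route as the paper: the key step $u_i=0$ is obtained, exactly as in the paper, from the one-dimensionality of the weight spaces $V^k(\g)_{k\L_0-j\eta_i}$ (Lemma \ref{xthetaM}) together with the survival of $(x_{\theta_i})_{(-1)}^{M_i(k)+1}\vac$ in the $\ga^\natural$--integrable quotient $V_k(\g)$, and generation is deduced from $\langle s_i\rangle=\langle w_i\rangle=J^k$ via the nonvanishing of $(x_{-\a_1})_{(0)}s_i$ and the Gorelik--Kac integrability-implies-irreducibility argument. The only cosmetic difference is that you verify $e_\beta s_i=0$ for each $\beta\in\Pia$ directly by commutators, where the paper packages the same computation as passage through $r_{\a_1}(\Pia)$ and the odd reflection Lemma \ref{oddref}.
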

\begin{proof}Recall that $\a_0=\d-\theta$. Since $(x_{\a_1})_{(0)}\vac=0$, $\vac$ is singular also for $r_{\a_1}(\widehat\Pi)$. Since $(x_{\theta-\a_1})_{(-1)}\vac$ is nonzero, it follows from Lemma \ref{oddref} that $(x_{\theta-\a_1})_{(-1)}\vac$ is singular in $V^k(\g)$ for $\widehat\Pi'=r_{\a_0+\a_1}r_{\a_1}(\widehat\Pi)$. Let $\L'=k\L_0-\d+\theta-\a_1$ be the weight of $(x_{\theta-\a_1})_{(-1)}\vac$. \par Since $\d-\theta_i$ is  in $\widehat\Pi'$ and,  by \eqref{MIK} and \eqref{chii},
$$
(\L'|(\d-\theta_i)^\vee)=\frac{2}{(\theta_i|\theta_i)}k+(\a_1|\theta_i^\vee)=M_i(k)-\chi_i-(\xi|\theta_i^\vee)
=M_i(k), 
$$
 we see that $w_i$ is singular for $\widehat\Pi'$.
Since 
$V^k(\g)/\langle w_i\rangle$
is  integrable with respect to $\ga^\natural$, it is irreducible, because the computation of its character formula in \cite{GK2} did not use irreducibility, but only integrability. Hence the vectors $w_i$
 generate the maximal proper ideal of $V^k(\g)$.

The weight of $u_i$ is $k\L_0 -s'\eta_i$, where
$$
s':=1+(\L'|\eta_i^\vee)=1+(k \L_0|\eta_i^\vee)-(\a_0+\a_1|\eta_i^\vee)=1+s+(\a_1,\theta_i^\vee),
$$
and $s$ is defined in \eqref{s}.
 Since $(\a_1|\theta_i^\vee)\le -1$, we have $s'\le (k \L_0|\eta_i^\vee)$, so $(J^k)_{k\L_0-s'\eta_i} =0$ by Lemma \ref{xthetaM} (2). Since $w_i\in J^k$, we have  $u_i\in J^k$, so $u_i=0$.

We now observe that the fact that $u_i=0$ implies that $w_i$ is a singular vector for $r_{-\a_0-\a_1}(\widehat\Pi')$ $=r_{\a_1}(\widehat\Pi)$. Indeed, if  $\be\ne\a_0+\a_1$ is a simple root for $r_{-\a_0-\a_1}(\widehat\Pi'),$ then it is a positive root for $\widehat\Pi'$, hence, since $w_i$ is singular for $\widehat\Pi'$, $(x_\be)_{(0)}(x_{\theta_i})_{(-1)}^{M_i(k)+1}(x_{\theta-\a_1})_{(-1)}\vac=0$. Observe that, since $\a_0=\d-\theta$,  $u_i=0$ simply means that 
$$
(x_{-\theta+\a_1})_{(1)}w_i=0.
$$
 Having shown that $w_i$ are singular for $r_{\a_1}(\widehat\Pi)$, it follows that the $s_i$ are either zero or  singular vectors for $\widehat\Pi$.
 
Let $\L'''_i=k\L_0-(M_i(k)+1) (\d-\theta_i)-\d+\theta-\a_1$ be the weight of $w_i$.
 Since $k\le 0$ (see Table 1), we have
 $$
 (\L'''_i|\a_1)=((M_i(k)+1)\theta_i|\a_1)+1=-k+1>0,
 $$
 so,
$$(x_{-\a_1})_{(0)}s_i=c(-k+1)w_i-(x_{\a_1})_{(0)}(x_{-\a_1})_{(0)}w_i, \text{ where $c\in \C$}.
$$

Since $w_i$ are  singular vectors  for $r_{\a_1}(\widehat \Pi)$, we see that 
$$(x_{-\a_1})_{(0)}w_i=0.
$$
 It follows that $(x_{-\a_1})_{(0)}s_i$
are nonzero multiples of $w_i$,  hence the $s_i$ generate the maximal proper ideal of $V^k(\g)$, since the $w_i$ do.
\end{proof}

\noindent {\it Proof of Theorem \ref{C} for $\g\ne spo(2|3)$.} 
Since $L(k\Lambda_0) = V^k(\g)/J^k$ is irreducible, $H_0$ is exact on category $\mathcal O$ and $H_0(L(k\L_0))\ne 0$, by \cite{Araduke}, we have that the $\Wu$-module  
$H_0(V^k(\g))/H_0(J^k)$ irreducible, so $I^k=H_0(J^k)$. Assume first that $|S|=1$. From Proposition \ref{singprime}, we deduce that $I^k=H_0(J^k)$ is a highest weight module.  Due to  \cite[(2.19)]{KRW} its  highest weight is 
\begin{equation}\label{w} ((M_1(k)+1)\theta_1,M_1(k)+1).\end{equation}
By \eqref{vinik},  $v_1$ lies in $I^k$; moreover, it  has weight \eqref{w}, hence it is a  highest weight vector. In particular it is singular and generates $I^k$.
When $|S|=2$, $\g$ is of type $D(2,1; \tfrac{m}{n})$ and $k=\tfrac{mn}{m+n}q,\,q\in\nat$. $H_0(J^k)$  is the sum of two highest weight modules of weights  
$(mq\theta_1,mq),\,(nq\theta_2,nq)$.  By Remark \ref{d21a}, we can assume $m>n$; then $v_2$  is singular, by the above comparing weight argument. We should  prove that 
$v_1$ is not in the submodule generated by $v_2$. Otherwise, we can reach $v_1$  applying to $v_2$ a combination of operators 
$L_m, m<0,\, J^{\{u\}}_{r}, u\in \g^\natural, r\leq 0, G^{\{v\}}_s,\,v\in \g_{-1/2}, s<0$.
 We can clearly assume that the $v$'s appearing are root vectors; let $\beta_v$ be the corresponding root, so that $\eta_v:=(\beta_v)_{|\h^\natural}$ is the corresponding weight. Let $\Pi=\{\a_1,\a_2,\a_3\}$ with $\a_1$ odd. Then $\eta_v\in\pm\{\tfrac{\a_2-\a_3}{2},\tfrac{\a_2+\a_3}{2}\}$. Define the weight of $ G^{\{v\}}_s$ to be the pair $(\eta_v,-s)$; likewise, if $u$ is a root vector of $\g^\natural$, and $\eta_u$ is the corresponding root, define the weight of 
  $J^{\{u\}}_{r}$ as $(\eta_u, -r)$. Finally, declare that the weight of $L_n$ is $(0,-n)$. Let $\D_W$ be the set of weights. Note that any
  element of $\D_W$ is a positive integral linear combination of $\Pi_W=\{(-\a_2,0),(-\a_3,0),(\tfrac{\a_2+\a_3}{2},\half)\}$. It follows that 
  $$(mq\a_2,mq) - (nq\a_3,nq) =a_1(-\a_2,0)+a_2(-\a_3,0)+a_3(\tfrac{\a_2+\a_3}{2},\half),\quad a_i\in \ZZ_+, $$ 
so that $a_1=-nq, a_2=mq, a_3=2(m-n)q$. Since $a_1$ is negative, we have the required contradiction.\qed

\subsection{$\g=spo(2|3)$}
Introduce the following vector
\begin{equation}
r_1=(x_{\a_1})_{(0)}u_1= (x_{\a_1})_{(0)}(x_{-\theta+\a_1})_{(1)}w_1.
\end{equation}
\begin{prop}\label{singprime2} The vector $r_1$ 
is singular in the universal affine vertex algebra $V^k(\g)$ and generates $J^k$.
\end{prop}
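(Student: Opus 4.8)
The plan is to adapt the proof of Proposition \ref{singprime}, with one essential modification. For $\g\ne spo(2|3)$ the decisive point was that $u_i=0$, which let one pass from $w_i$ to a singular vector for $\widehat\Pi$ by a single odd reflection; that vanishing rested on the dimension count of Lemma \ref{xthetaM}, which is unavailable here precisely because for $spo(2|3)$ the root $\theta_1$ is not the highest root of any base of $\g$. Instead I would show that $u_1\ne0$ and return to $\widehat\Pi$ through \emph{two} odd reflections, whose composite carries $w_1$ to $r_1$. The portion of the proof of Proposition \ref{singprime} establishing that $w_1$ is singular for $\widehat\Pi'=r_{\a_0+\a_1}r_{\a_1}(\widehat\Pi)$ and that $\langle w_1\rangle=J^k$ (through the $\ga^\natural$--integrability of $V^k(\g)/\langle w_1\rangle$ and the character computation of \cite{GK2}) does not invoke Lemma \ref{xthetaM}, so it applies verbatim to $\g=spo(2|3)$; I record this at the outset, together with the fact that $-(\a_0+\a_1)$ is the unique isotropic simple root of $\widehat\Pi'$, with raising operator $(x_{\theta-\a_1})_{(-1)}$, and that $u_1=(x_{-\theta+\a_1})_{(1)}w_1$.

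Since $w_1$ is $\widehat\Pi'$--singular, $(x_{\theta-\a_1})_{(-1)}w_1=0$, so that
\begin{equation*}
(x_{\theta-\a_1})_{(-1)}u_1=[(x_{\theta-\a_1})_{(-1)},(x_{-\theta+\a_1})_{(1)}]\,w_1=(\L'''_1\,|\,\a_0+\a_1)\,w_1,
\end{equation*}
where $\L'''_1=k\L_0-(M_1(k)+1)(\d-\theta_1)-\d+\theta-\a_1$ is the weight of $w_1$ and I use that the bracket of the root vectors for $\pm(\a_0+\a_1)$ acts on a weight vector through the invariant form. A short computation with Tables 1 and 2, in particular $\chi_1=-2$, evaluates the scalar as $(\L'''_1\,|\,\a_0+\a_1)=k+\tfrac12(M_1(k)+1)=\tfrac14 M_1(k)$, which is nonzero because $M_1(k)\in\nat$ on the unitary range. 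Thus $u_1\ne0$, and Lemma \ref{oddref} (applied to $w_1$ with the isotropic root $-(\a_0+\a_1)$) shows that $u_1$ is singular for $r_{\a_0+\a_1}(\widehat\Pi')=r_{\a_1}(\widehat\Pi)$; moreover $u_1$ has weight $k\L_0-(M_1(k)+1)(\d-\theta_1)$.

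In $r_{\a_1}(\widehat\Pi)$ the root $-\a_1$ is isotropic simple, with raising operator $(x_{-\a_1})_{(0)}$, so $(x_{-\a_1})_{(0)}u_1=0$ and
\begin{equation*}
(x_{-\a_1})_{(0)}r_1=[(x_{-\a_1})_{(0)},(x_{\a_1})_{(0)}]\,u_1=(k\L_0-(M_1(k)+1)(\d-\theta_1)\,|\,\a_1)\,u_1=\tfrac12(M_1(k)+1)\,u_1,
\end{equation*}
again nonzero. Hence $r_1\ne0$, and Lemma \ref{oddref} (applied to $u_1$ with the isotropic root $-\a_1$) shows that $r_1$ is singular for $r_{-\a_1}(r_{\a_1}(\widehat\Pi))=\widehat\Pi$, which is the first assertion. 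For the second, the two displayed identities give that $(x_{\theta-\a_1})_{(-1)}(x_{-\a_1})_{(0)}r_1$ is a nonzero multiple of $w_1$; since $r_1\in J^k$ (it arises from $w_1\in J^k$ under the action of $\ga$) and $w_1$ generates $J^k$, we conclude $\langle r_1\rangle=J^k$.

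The main obstacle is the pair of nonvanishing statements $u_1\ne0$ and $r_1\ne0$: once Lemma \ref{xthetaM} is removed nothing automatic forces them, and one must check directly, with the correct (anti)commutator signs and the explicit form and roots of Table 2, that neither eigenvalue $\tfrac14 M_1(k)$, $\tfrac12(M_1(k)+1)$ degenerates on the unitary range. Conceptually, the content of the $spo(2|3)$ case is exactly that the failure of $\theta_1$ to be a top root makes $u_1$ survive, so that the extra odd reflection through $u_1$ is both possible and necessary.
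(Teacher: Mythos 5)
Your argument is correct and follows essentially the same route as the paper's proof: establish that $w_1$ generates $J^k$ as in Proposition \ref{singprime}, verify $u_1\ne 0$ via the nonvanishing of $(\L'''_1|\a_0+\a_1)$ (your value $\tfrac14 M_1(k)$ agrees with the paper's $-k-\tfrac12$), and verify $r_1\ne 0$ via $(k\L_0-(M_1(k)+1)(\d-\theta_1)|\a_1)=\tfrac12(M_1(k)+1)\ne 0$, applying Lemma \ref{oddref} twice to return to $\widehat\Pi$. Your write-up is somewhat more explicit than the paper's (which compresses the nonvanishing checks into ``an explicit calculation shows'' and the generation statement into ``the claim follows''), but the content is identical.
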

\begin{proof} As in the proof of Proposition \ref{singprime}, the vector $w_1$
 generates the maximal proper ideal of $V^k(\g)$.  Let $\mu$ be weight of $w_1$. An explicit calculation shows  that 
 $(\mu|\a_0+\a_1)=-k-\tfrac{1}{2}\ne 0$, hence $u_1\ne 0$,   and it is  singular for $r_{\a_1}\Pia$. So $r_1$ is either $0$ or singular for $\Pia$. The first possibility does not occur, since
 $$(k\L_0-(M_1(k)+1)(\d-\theta_1)|\a_1)=\tfrac{1}{2}(M_1(k)+1)\ne0.$$
The claim follows.
\end{proof}

  
\noindent {\it Proof of Theorem \ref{C} for  $\g= spo(2|3)$.}
Arguing as in the previous subsection, by Proposition \ref{singprime2} it follows that the maximal ideal in $W^k_{\min}(\g)$  generated by  a singular vector $v$ of weight  $(2 (m-2) \omega_1,m-3/2)$, where $\omega_1$ is the fundamental weight for $\mathfrak{sl}_2$ and $m=M_1(k)+2$. We observe that also the weight of $\widetilde v_1$ (cf. \eqref{tv1}) is $(2 (m-2) \omega_1,m-3/2)$. Moreover,    using \eqref{vinik} and the relations
\begin{align*}(G^{\{x_{-\a_1}\}})_{(0)}\widetilde v_1= c_1v_1,\quad   (G^{\{x_{-\xi}\}})_{(1)}  v_1 = c_2 \widetilde v_1, 
\text{ where $c_1,c_2\in\C\setminus \{0\},$}\end{align*}
we see that $\widetilde v_1 \in I^k$, hence it is a multiple of $v$, so  it is singular and generates $I^k$.
\qed

\section{Descending from $\Wu$ to $\Ws$}
 Let $H_0$ be the quantum Hamiltonian reduction functor from the category $\mathcal O_k$ of $\ga$-modules of level $k$ to the category of $W^k_{\min}(\g)$-modules \cite{KW1}.  By 
\cite{Araduke}, it is exact. 
As in \eqref{nuh}, for $\nu\in P^+_k$ and $h\in\C$ let $\widehat \nu_h=k\L_0+h\theta+\nu$. By \cite{Araduke}, \cite{KW1}, 
$H_0(L(\widehat \nu_h))=0$ if $\widehat \nu_h(\a_0^\vee)=k-2h\in\mathbb Z_{\ge 0}$, and 
$H_0(L(\widehat \nu_h)=L^W(\nu,\ell_0(h))$ if $\widehat \nu_h(\a_0^\vee)=k-2h\notin\mathbb Z_{\ge 0},$ where
\begin{equation}\label{ell0}
\ell_0(h)=\frac{(\widehat \nu_h|\widehat \nu_h+2\widehat \rho)}{2(k+h^\vee)}-h,\end{equation}
and $L^W(\nu,\ell_0)$ is  the irreducible highest weight $\Wu$-module with highest weight $(\nu,\ell_0)$
\cite{KMP1}, \cite{KW1}.
\par

 \begin{theorem}\label{t1} Let $k$ be in the unitary range. Then all irreducible highest weight $W^k_{\min}(\g)$--modules $L^W(\nu,\ell_0)$  with $\ell_0\in\C$ when $\nu\in P^+_k$ is not extremal, and $\ell_0=A(k,\nu)$ otherwise,
descend to $W_k^{\min}(\g)$. 
\end{theorem}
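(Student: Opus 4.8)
The plan is to realize each module $L^W(\nu,\ell_0)$ appearing in the statement as a reduction $H_0(L(\widehat\nu_h))$ of a suitable irreducible highest weight $V_k(\g)$-module, exploiting two facts: that $H_0$ carries $V_k(\g)$-modules to $\Ws$-modules, and that Theorem~\ref{TA} tells us exactly which $L(\widehat\nu_h)$ are $V_k(\g)$-modules. The central preliminary step is therefore the functorial claim: if $M$ is a $V^k(\g)$-module with $J^kM=0$ (equivalently, a $V_k(\g)$-module), then $I^kH_0(M)=0$, so that $H_0(M)$ descends to $\Ws=\Wu/I^k$.

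To prove this claim I would use the tensor structure of the BRST complex computing $H_0$: one has $C(N)=N\otimes\Phi$ for the ghost vertex superalgebra $\Phi$, so that $C(V^k(\g))=V^k(\g)\otimes\Phi$ and, $J^k$ being an ideal, $C(J^k)=J^k\otimes\Phi$ is a subcomplex. Since $H_0$ is exact (as used in the proof of Theorem~\ref{C}), $H(C(J^k))=H_0(J^k)=I^k$, so every class in $I^k$ is represented by a cocycle $\omega=\sum_j a_j\otimes\phi_j$ with $a_j\in J^k$. On $C(M)=M\otimes\Phi$ the operators $Y(\omega,z)$ are assembled from the fields $Y_M(a_j,z)$, which vanish because $J^kM=0$; hence $I^k$ acts by zero on $H_0(M)=H(C(M))$, as desired.

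For the realization step I would use that $H_0(L(\widehat\nu_h))=L^W(\nu,\ell_0(h))$ when $k-2h\notin\mathbb Z_{\ge 0}$, and $H_0(L(\widehat\nu_h))=0$ otherwise, with $\ell_0(h)$ as in \eqref{ell0}. Using $(\theta|\theta)=2$, $(\theta|\nu)=0$ and $(\widehat\rho|\theta)=h^\vee-1$, one computes $(\widehat\nu_h|\widehat\nu_h)=2h^2+(\nu|\nu)$ and finds that $\ell_0(h)$ is quadratic in $h$ with leading coefficient $(k+h^\vee)^{-1}$ and linear coefficient $-(k+1)(k+h^\vee)^{-1}$. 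Hence the two roots $h_1,h_2$ of any equation $\ell_0(h)=c$ satisfy $h_1+h_2=k+1$; in particular they cannot both satisfy $k-2h_i\in\mathbb Z_{\ge 0}$, since then $h_i=(k-n_i)/2$ with $n_i\in\mathbb Z_{\ge 0}$ would force $n_1+n_2=-2$. Thus, for any prescribed value of $\ell_0$, at least one admissible $h$ yields a nonzero reduction.

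Finally I would conclude case by case. If $\nu\in P^+_k$ is non-extremal, Theorem~\ref{TA}(1) makes $L(\widehat\nu_h)$ a $V_k(\g)$-module for every $h$; choosing $h$ with $\ell_0(h)=\ell_0$ and $k-2h\notin\mathbb Z_{\ge 0}$ (possible by the previous paragraph), the preliminary step gives $L^W(\nu,\ell_0)=H_0(L(\widehat\nu_h))$ as a $\Ws$-module, for arbitrary $\ell_0\in\C$. If $\nu$ is extremal, Theorem~\ref{TA}(2) restricts $h$ to $E_{k,\nu}=\{(\xi|\nu),\,k+1-(\xi|\nu)\}$; since these two values sum to $k+1$, they are the two roots of $\ell_0(h)=\ell_0((\xi|\nu))$, and a direct comparison of \eqref{ell0} with \eqref{Aknu} (using $(\theta|\nu)=0$ and the vanishing of the sum of the weights of the $\g^\natural$-module $\g_{1/2}$, valid since $\g^\natural$ is semisimple in all the cases of Table~1) shows this common value equals $A(k,\nu)$; choosing the element of $E_{k,\nu}$ with $k-2h\notin\mathbb Z_{\ge 0}$ then exhibits $L^W(\nu,A(k,\nu))$ as a $\Ws$-module. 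I expect the main obstacle to be the preliminary functoriality step, namely pinning down the $\Wu$-action on $H_0(M)$ and verifying that it factors through $\Ws$ for $V_k(\g)$-modules $M$; once the tensor structure of the BRST complex and the identity $I^k=H_0(J^k)$ are in hand, the remaining steps are routine.
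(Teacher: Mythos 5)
Your proposal is correct and follows essentially the same route as the paper: realize $L^W(\nu,\ell_0)$ as $H_0(L(\widehat\nu_{\tilde h}))$ for a choice of $\tilde h$ with $k-2\tilde h\notin\ZZ_{\ge 0}$, then invoke Theorem \ref{TA} to see that $L(\widehat\nu_{\tilde h})$ is a $V_k(\g)$-module. The only difference is that you supply explicit arguments for three facts the paper cites or asserts without proof --- the BRST-complex argument that $H_0$ carries $V_k(\g)$-modules to $\Ws$-modules, the symmetry $\ell_0(h)=\ell_0(k+1-h)$ (which is \cite[Lemma 11.8]{KMP1}), and the identity $\ell_0(h)=A(k,\nu)$ for $h\in E_{k,\nu}$ --- and all three verifications are sound.
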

\begin{proof}  
 If $k-2h\in\ZZ_{\ge 0}$, then, by \cite[Lemma 11.8]{KMP1}, for $h'=k+1-h$ we have 
 $\ell_0:=\ell_0(h)=\ell_0(h')$. Since $k -2 h'\notin \ZZ_{+}$ we conclude that  $H_0(L(\widehat \nu_{h'}))=L^W(\nu,\ell_0)$.
 
 So
 \begin{equation}\label{dacit}\text{for each $\ell_0$ there is $\tilde h\in \C$ such that $L^W(\nu,\ell_0)=H_0(L(\widehat \nu_{\tilde h}))$}.\end{equation}
 By Theorem \ref{TA}, if $\nu$ is not extremal, then $L(\widehat \nu_{\tilde h})$ is a $V_k(\g)$--module, hence $L^W(\nu,\ell_0(\tilde h))$ is a $\Ws$-module. Note that  $h\in E_{k,\nu}$ if and only if $\ell_0(h)=A(k,\nu)$. It follows from Theorem \ref{TA} that,  if   $\nu$ is  extremal,  then $L(\widehat \nu_{\tilde h})$ is  a $V_k(\g)$--module, hence 
 $L^W(\nu,A(k,\nu))$ is a $\Ws$-module.
\end{proof}

A simple application of Theorem \ref{t1} is the proof of Conjecture 4 in \cite{KMP1}.

\begin{cor}\label{uu}
Any unitary $W^k_{\min}(\g)$--module $L^W(\nu,\ell_0)$ descends to 
$W_k^{\min}(\g)$.
\end{cor}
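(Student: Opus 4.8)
The plan is to obtain the corollary by feeding the necessary conditions for unitarity (Theorem \ref{nec}) into the descent statement just established (Theorem \ref{t1}). Suppose $L^W(\nu,\ell_0)$ is unitary. First I would record that unitarity forces $\nu\in P^+_k$: restricted to the affine subalgebra generated by $\g^\natural$, the module is a direct sum of integrable highest weight $\ga^\natural$--modules, so $\nu$ is dominant integral, i.e. $\nu\in P^+$; and Theorem \ref{nec}(1)(b) supplies the bound $\nu(\theta_i^\vee)\le M_i(k)$ for all $i$. Together with \eqref{p+k} these give $\nu\in P^+_k$, which places $\nu$ squarely in the regime handled by Theorem \ref{t1}.

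Next I would split according to whether $\nu$ is extremal. If $\nu$ is not extremal, Theorem \ref{t1} asserts descent for every $\ell_0\in\C$, and there is nothing more to check. If $\nu$ is extremal --- that is, $\nu(\theta_i^\vee)>M_i(k)+\chi_i$ for some $i\in S$, cf. \eqref{extr} --- then the final clause of Theorem \ref{nec}(1)(c) forces equality in \eqref{eh}, namely $\ell_0=A(k,\nu)$. This is precisely the value of $\ell_0$ for which Theorem \ref{t1} guarantees descent in the extremal case. In either case $L^W(\nu,\ell_0)$ descends to $\Ws$.

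I do not expect a real obstacle here, since the substantive work has already been carried out in Theorems \ref{TA} and \ref{t1}. The only point worth verifying is that the extremal/non-extremal dichotomy used in Theorem \ref{t1} is governed by exactly the same inequality $\nu(\theta_i^\vee)>M_i(k)+\chi_i$ that triggers the forced equality in Theorem \ref{nec}(1)(c); since both conditions are literally \eqref{extr}, the two statements dovetail and the corollary follows at once. Thus the argument is a clean case split, with the unitarity bound of Theorem \ref{nec} feeding directly into the two alternatives provided by Theorem \ref{t1}.
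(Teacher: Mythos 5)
Your argument is correct and is essentially the paper's own proof: the paper likewise deduces the corollary by observing that the necessary conditions of Theorem \ref{nec}(1) force any unitary $L^W(\nu,\ell_0)$ to be among the modules listed in Theorem \ref{t1}. Your version merely makes explicit the extremal/non-extremal case split and the observation that both statements use the same condition \eqref{extr}, which the paper leaves implicit.
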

\begin{proof}
By Theorem 1.3 (1) of \cite{KMP1} (see Theorem \ref{nec} (1)), the unitary representations of $W^k_{\min}(\g)$ occur as representations listed in Theorem \ref{t1}.
\end{proof}
\begin{remark} Conjecture 2 from \cite{KMP1} that all $\Ws$-modules
$L^W(\nu, A(k, \nu))$ for extremal $\nu$
are unitary is still an open problem, except for $\g=spo(2|3)$
and $psl(2|2)$. We can prove this statement also when  $\g=spo(2|n)$ and $k=-1$.
\end{remark} 
\section{Classification of irreducible highest weight representations of $W_k^{\min}(\g)$ in the unitarity range.}\label{Section 6}
The main result of this section is the following theorem.
\begin{theorem}\label{t2} Let $k$ be in the unitary range. The modules appearing in Theorem \ref{t1} form the  complete list of inequivalent irreducible  highest weight representations of 
$W_k^{\min}(\g)$.
\end{theorem}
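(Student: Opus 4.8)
The plan is to prove that the list in Theorem \ref{t1} is exhaustive by combining the surjectivity of the Hamiltonian reduction functor $H_0$ on the relevant irreducible modules with the complete classification of $V_k(\g)$-modules already established in Theorem \ref{TA}. The starting observation is that every irreducible highest weight $W_k^{\min}(\g)$-module is, in particular, an irreducible highest weight $W^k_{\min}(\g)$-module annihilated by the maximal ideal $I^k$, and hence must be of the form $L^W(\nu,\ell_0)$ for some $\nu\in(\h^\natural)^*$ and $\ell_0\in\C$. So the task reduces to showing that, among all such $L^W(\nu,\ell_0)$, precisely those appearing in Theorem \ref{t1} descend to the simple quotient $W_k^{\min}(\g)$.

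First I would handle the easy inclusion: by Theorem \ref{t1} every module in the stated list is already known to be a $W_k^{\min}(\g)$-module, so it remains to prove that no other irreducible highest weight $W^k_{\min}(\g)$-module survives to the quotient. For this I would exploit the relation \eqref{dacit}, which produces for each pair $(\nu,\ell_0)$ a weight $\widetilde h\in\C$ with $L^W(\nu,\ell_0)=H_0(L(\widehat\nu_{\widetilde h}))$ (using the symmetry $\ell_0(h)=\ell_0(k+1-h)$ of \cite[Lemma 11.8]{KMP1} to arrange $k-2\widetilde h\notin\ZZ_{\ge 0}$, so that $H_0$ does not kill the module). Since $H_0$ is exact \cite{Araduke} and carries quotients to quotients, a module $L^W(\nu,\ell_0)$ is a $W_k^{\min}(\g)=H_0(V_k(\g))$-module if and only if it is a subquotient of $H_0$ applied to a $V_k(\g)$-module, equivalently if and only if the preimage $L(\widehat\nu_{\widetilde h})$ is itself a $V_k(\g)$-module.

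The crux is therefore to show that if $L^W(\nu,\ell_0)$ is \emph{not} in the Theorem \ref{t1} list, then no admissible choice of $\widetilde h$ yields a $V_k(\g)$-module $L(\widehat\nu_{\widetilde h})$. Here I would invoke Theorem \ref{TA} directly: the $V_k(\g)$-modules are exactly $L(\widehat\nu_h)$ with $\nu\in P^+_k$ non-extremal and $h$ arbitrary, or $\nu$ extremal and $h\in E_{k,\nu}=\{(\xi|\nu),\,k+1-(\xi|\nu)\}$. Translating the constraint $h\in E_{k,\nu}$ through \eqref{ell0} and the identity $h\in E_{k,\nu}\iff\ell_0(h)=A(k,\nu)$ (noted in the proof of Theorem \ref{t1}), one sees that for extremal $\nu$ the only surviving conformal weight is $\ell_0=A(k,\nu)$, and for non-extremal $\nu$ all $\ell_0$ survive. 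This matches precisely the list in Theorem \ref{t1}, so any $L^W(\nu,\ell_0)$ outside that list has all its $H_0$-preimages $L(\widehat\nu_{\widetilde h})$ lying strictly between $L(\widehat\nu_{\widetilde h})$ and $V^k(\g)$ without descending to $V_k(\g)$; exactness of $H_0$ then forces $I^k$ to act nontrivially, so the module does not descend.

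The step I expect to be the main obstacle is ensuring that the correspondence $L^W(\nu,\ell_0)=H_0(L(\widehat\nu_{\widetilde h}))$ is faithful enough to detect descent --- that is, that $L^W(\nu,\ell_0)$ being a $W_k^{\min}(\g)$-module genuinely forces its chosen affine preimage to be a $V_k(\g)$-module rather than merely some subquotient phenomenon surviving the reduction. This requires care because $H_0$ can both annihilate modules and identify distinct inputs; I would control it by checking that for the admissible $\widetilde h$ with $k-2\widetilde h\notin\ZZ_{\ge 0}$ the functor $H_0$ is injective on the relevant block of category $\mathcal O_k$ (again via \cite{Araduke}), and that the image $I^k=H_0(J^k)$ of the maximal affine ideal, whose generators were identified explicitly in Theorem \ref{C}, is exactly the obstruction to descent. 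Once this compatibility between the affine and $W$-algebraic maximal ideals is in place, the classification follows by transporting Theorem \ref{TA} through $H_0$.
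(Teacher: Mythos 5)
Your reduction of Theorem \ref{t2} to Theorem \ref{TA} hinges on the equivalence ``$L^W(\nu,\ell_0)$ descends to $W_k^{\min}(\g)$ if and only if some affine preimage $L(\widehat\nu_{\widetilde h})$ descends to $V_k(\g)$.'' Only the ``if'' direction is available: it is exactly what Theorem \ref{t1} establishes, using exactness of $H_0$ and $I^k=H_0(J^k)$. The ``only if'' direction --- that a highest weight $\Wu$-module annihilated by $I^k$ must come from an affine module annihilated by $J^k$ --- is precisely the content of Theorem \ref{t2}, and it does not follow formally from exactness of $H_0$: the functor relates $J^k$ and $I^k$ at the level of the vacuum module, but it does not reflect annihilation of ideals on arbitrary modules. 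Your proposed fix (injectivity of $H_0$ ``on the relevant block'') cannot work as stated, since $H_0$ is manifestly not injective on irreducibles: $L(\widehat\nu_h)$ and $L(\widehat\nu_{k+1-h})$ are distinct $\ga$-modules with the same image $L^W(\nu,\ell_0(h))$. Indeed, the paper records the equivalence you want as a \emph{corollary} of Theorem \ref{t2} (the remark following its statement), not as an ingredient of its proof; using it as an ingredient is circular.

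The paper closes this gap by a direct Zhu-algebra computation. Since $Zhu(\Ws)\simeq(\C[L]\otimes U(\g^\natural))/J(\g)$, any element of $J(\g)$ constrains the top component $V(\nu,\ell_0)$ of an irreducible positive energy module. Applying the zero modes $G^{\{x_{-\gamma_1}\}}_{(0)}G^{\{x_{-\gamma_2}\}}_{(0)}$ to the explicit generators of $I^k$ from Theorem \ref{C} produces elements $\Omega_i*[J^{\{x_{\theta_i}\}}]^{M_i(k)}\in J(\g)$ (Proposition \ref{Zhu-general}; Lemma \ref{rel-m4} for $spo(2|3)$). Integrability over $V_{M_i(k)}(\g^\natural_i)$ forces $\nu\in P^+_k$, and for extremal $\nu$ the element $[J^{\{x_{\theta_i}\}}]^{M_i(k)}$ acts nontrivially on the top component, so $\Omega_i$ must kill a suitable vector; since $\Omega_i$ acts there by a polynomial $\Gamma_i(\ell_0)$ that is \emph{linear} in $\ell_0$ with nonzero leading coefficient $-c_2(k+h^\vee)$, and $A(k,\nu)$ is already known to be a root, one gets $\ell_0=A(k,\nu)$. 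Some argument of this kind, working inside $Zhu(\Ws)$ rather than upstairs in category $\mathcal O_k$, is what your proposal is missing.
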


\begin{remark} Combining  \eqref{dacit} and Theorem \ref{t2}, we have proven that the irreducible highest weight modules of
$\Ws$ are precisely the non-zero images of the irreducible $V_k(\g)$ under  Hamiltonian reduction.
\end{remark}

We need to recall some well-known facts about Zhu algebras in the super case \cite{KWa}.\par 	\noindent
 Let $V=\bigoplus\limits_{n\in \tfrac{1}{2}\ZZ}V_n$ be a conformal vertex algebra, graded by the eigenspaces of $L_0$, with the parity 
 $p(V_n)\equiv 2n\mod 2$.  For $a\in V_n$ we write $\deg a =n$. Define bilinear maps $*:V\times V\to V,$ $\circ :V\times V	\to V$, setting
\begin{align*}
a*b&=\begin{cases}Res_z\left(Y(a,z)\frac{(z+1)^{\deg a}}{z}b\right)\quad&\text{if $a,b\in V_{\overline 0}$,} \\ 0 \quad &\text{if $a$ or $b\in V_{\overline 1}$.}
\end{cases}\\
a\circ b&=\begin{cases}Res_z\left(Y(a,z)\frac{(z+1)^{\deg a}}{z^2}b\right)\quad&\text{for $a\in V_{\overline 0}$,} \\ 
Res_z\left(Y(a,z)\frac{(z+1)^{\deg a-\tfrac{1}{2}}}{z^2}b\right)\quad&\text{for $a\in V_{\overline 1}$.} \\ 
\end{cases}
\end{align*}
Denote by $O(V)\subset V$ the $\C$-span of elements of the form $a\circ b$, and by $Zhu(V)$ the quotient space $V/O(V)$.       Then 
 $Zhu(V)$ is an associative algebra. 
Recall from  \cite[(3.1.12)]{FZ} or \cite[Lemma 1.1.(3)]{KWa} that if $[a]$ denotes the class of an element $a\in V$ in $Zhu(V)$, then for all $a,b\in V_{\overline 0}$ the following relations hold:
\begin{equation}\label{Z}
[a_{(-1)}\vac]*[b]=[(a_{(-1)}+a_{(0)})b],\quad  [b]*[a_{(-1)}\vac]=[a_{(-1)}b].
\end{equation}
\vskip10pt

Recall that a module $M$ over a conformal vertex algebra  is called a {\it positive energy} module if $L_0$ is diagonalizable on $M$ and all its eigenvalues lie in $h+\R_{\ge 0}$ for some $h\in\C$:
$$M=\bigoplus_{j\in h+\R_{\ge 0}}M_j,\quad M_h\ne \{0\}.$$
The subspace $M_h$ is called the {\it top component} of $M$.\par

 Recall  \cite{FZ}, \cite{KWa} that there is one-to-one correspondence between irreducible positive energy $V$--modules 
  and irreducible modules over  the Zhu algebra $Zhu(V)$, which associates to a $V$-module $M$ the $Zhu(V)$-module $M_h$.
Namely, to $Y^M(a,z)=\sum_j a^M_jz^{-j-\deg a}$ one associates ${a_0^M}_{|M_h}$. By the above construction of the Zhu algebra, 
it follows from \cite[Theorem 7.1]{KW1} that 
\begin{equation}\label{star}
Zhu(\Wu)\simeq \C[L]\otimes U(\g^\natural).
\end{equation}

Under the correspondence between irreducible positive energy $\Wu$-modules and irreducible modules over its Zhu algebra, the module $L^W(\nu,h)$ goes to the irreducible highest weight $\g^\natural$-module with highest weight $\nu$ on which $L$ acts by the scalar $h$, which we denote by $V(\nu,h)$. It follows from \eqref{star} that 
\begin{equation}\label{sstar}
Zhu(\Ws)\simeq \left(\C[L]\otimes U(\g^\natural)\right)/J(\g),\end{equation}
where $J(\g)$ is a 2-sided ideal of the associative algebra $\C[L]\otimes U(\g^\natural)$. So any non-zero element in $J(\g)$ imposes a condition on the highest weight $(\nu,h)$ of the $Zhu(\Ws)$-module $V(\nu,h)$.

 \vskip10pt
\noindent{\it Proof of Theorem \ref{t2}: the case  $\g= spo(2\vert 3)$.}
First we present a proof in the case $\g = spo(2 \vert 3)$,  which gives a motivation for  the proof  in the general case.

Let $k=-\frac{m}{4},$ where $m\in\mathbb Z_{\ge 3}$, so that $M_1(k)=m-2$ and $P^+_k=\{j\omega_1\mid 0\leq j\leq m-2\}$ (here $\omega_1$ is the fundamental weight for $\g^\natural\cong sl_2$).   Set 
 $ \mathcal W^k =  W_{\min}^{k}(spo(2|3)),$\newline $\mathcal W_k =  W^{\min}_{k}(spo(2|3))$.
Denote by $L_k [j, q]$ the irreducible highest weight  $\mathcal W_k$-module of level $k$ generated by a highest weight vector $v_{j,q}$, such that for $n \in {\Z}_{\ge 0}$:
\bea
 && L_n v_{j,q} = q \delta_{n,0}v_{j,q}, \ G^{+}_{(n+1/2)}v_{j,q} = G^{-}_{(n+1/2)}v_{j,q} = G^{0}_{(n+1/2)}v_{j,q} =0 \nonumber \\
 &&  J^0_{(n)}v_{j,q} = j \delta_{n,0} v_{j,q}, \ J^+_{(n)}v_{j,q} = J^-_{(n+1)}v_{j,q}=0. \nonumber \eea
Here we use notation for the generators of $\mathcal W_k$ as in \cite[Section 8.5]{KW1}.
Note that $U(sl(2)) v_{j,q}=V  (j \omega_1)$ is the irreducible highest weight  $sl(2)$--module with highest weight $j \omega_1$.


\begin{lemma} \label{rel-m4}  Set $\Omega = -\tfrac{m-2}{ 4} (   [L] +\frac{[J^0]}{4} )   + \tfrac{1}{8} [J^+]*[J^-]$. 
 Then 
 \begin{equation}\label{relZ} \Omega  * [J^-] ^{m-3} \in J(\g).\end{equation}
\end{lemma}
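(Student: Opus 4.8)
The plan is to realize $J(\g)$ as the image $\pi(I^k)$ of the maximal ideal under the canonical projection $\pi\colon\Wu\to Zhu(\Wu)\cong\C[L]\otimes U(\mathfrak{sl}_2)$: since $\Ws=\Wu/I^k$, the kernel of the induced surjection $Zhu(\Wu)\to Zhu(\Ws)$ is exactly $\pi(I^k)$, so $J(\g)=\pi(I^k)$. By Theorem~\ref{C} for $\g=spo(2|3)$, the ideal $I^k$ is generated by the odd singular vector $\widetilde v_1$. It therefore suffices to exhibit a single vector of $I^k$ whose $\pi$-image is a nonzero multiple of $\Omega*[J^-]^{m-3}$. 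As an orientation, applying \eqref{Z} to the even vector $v_1=(J^{\{x_{\theta_1}\}}_{(-1)})^{M_1(k)+1}\vac\in I^k$ already gives $\pi(v_1)=[J^+]^{m-1}$, the relation that cuts $P^+_k$ out of $P^+$; the new content of \eqref{relZ} is the $L$-dependent relation, which must come from the $G$-fields.

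Since $\widetilde v_1$ is odd of conformal weight $m-\tfrac32$, it maps into the odd part of $Zhu(\Wu)$ and contributes nothing to the even algebra $\C[L]\otimes U(\mathfrak{sl}_2)$; even relations are obtained only after applying one odd mode to $\widetilde v_1$. Because the $\h^\natural$-weight of $\widetilde v_1$ is $+2(m-2)\omega_1$ while $\Omega*[J^-]^{m-3}$ has weight $-2(m-3)\omega_1$, I would first pass to the lowest-weight singular vector $\widetilde v_1^-\in I^k$, namely the image of $\widetilde v_1$ under the automorphism of $\Wu$ induced by the Weyl reflection $s_{\theta_1}$ of the subgroup $SL(2)^\natural$; this automorphism fixes $\{e,x,f\}$ (hence $L$) and acts by $-1$ on $\h^\natural$, so it preserves the unique maximal ideal $I^k$ and sends $\widetilde v_1$ to $(J^{\{x_{-\theta_1}\}}_{(-1)})^{m-3}G^{\{w_-\}}_{(-1)}\vac$, with $w_-\in\g_{-1/2}$ of lowest weight. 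Applying the odd mode $G^{\{w_+\}}_{(0)}$, with $w_+\in\g_{-1/2}$ of highest weight $\xi$, produces an even vector $V=G^{\{w_+\}}_{(0)}\widetilde v_1^-\in I^k$ of conformal weight $m-1$ and $\h^\natural$-weight exactly $-2(m-3)\omega_1$, matching the target.

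The core of the argument is the computation of $\pi(V)$. As $G^{\{w_+\}}_{(0)}$ is commuted toward the vacuum it meets the internal field $G^{\{w_-\}}_{(-1)}$, and the $\lambda$-bracket of two $G$-fields of $W^k_{\min}(spo(2|3))$—read from \cite[\S8.5]{KW1}—produces an $L$-term and quadratic terms in the $\g^\natural$-currents; these are the sources of the summands $-\tfrac{m-2}{4}[L]$ and $\tfrac18[J^+]*[J^-]$ of $\Omega$, while commuting $G^{\{w_+\}}_{(0)}$ past the $m-3$ factors $J^{\{x_{-\theta_1}\}}_{(-1)}$ reproduces the factor $[J^-]^{m-3}$ and the linear term $-\tfrac{m-2}{16}[J^0]$. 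Reducing the resulting normally ordered monomials to the PBW basis of $\C[L]\otimes U(\mathfrak{sl}_2)$ by repeated use of \eqref{Z} and collecting, I expect $\pi(V)=c\,\Omega*[J^-]^{m-3}$ with $c\neq0$. (Equivalently, one may compute the analogous highest-weight relation $\Omega'*[J^+]^{m-3}$ and transport it using the $\mathfrak{sl}_2$-covariance of the two-sided ideal $J(\g)$.)

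The main obstacle is this last bookkeeping: one must carry all level-dependent coefficients through the specialization $k=-m/4$, $M_1(k)=m-2$, $\chi_1=-2$ and check that they assemble exactly into $-\tfrac{m-2}{4}$, $-\tfrac{m-2}{16}$ and $\tfrac18$. A subsidiary point, settled by a weight-multiplicity count in the spirit of Lemma~\ref{xthetaM}, is to confirm that $V\neq0$ and that $c\neq0$, so that \eqref{relZ} is a genuine relation. Granting \eqref{relZ}, evaluating $\Omega*[J^-]^{m-3}$ on the top component $V(j\omega_1,q)$ forces $q=A(k,j\omega_1)$ precisely for the extremal weights $j\in\{m-3,m-2\}$ (those with $f^{m-3}$ nonzero on the highest-weight vector), which is the input needed for Theorem~\ref{t2}.
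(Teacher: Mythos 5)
Your strategy coincides with the paper's: since $\widetilde v_1$ is odd and therefore vanishes in $Zhu(\Wu)\simeq\C[L]\otimes U(\g^\natural)$, one passes to the lowest-weight vector $(J^-_{(-1)})^{m-3}G^-_{(-1)}\vac$ of the $\g^\natural$-module generated by $\widetilde v_1$ inside $I^k$ (your Weyl-reflection detour is a cosmetic variant of this), applies the raising zero-mode $G^+_{(0)}$, and reduces the resulting even element of $I^k$ in the Zhu algebra. The problem is that you stop exactly where the lemma begins. The assertion to be proved is not that \emph{some} element of the form $\bigl(\text{combination of }[L],[J^0],[J^+]*[J^-]\bigr)*[J^-]^{m-3}$ lies in $J(\g)$, but that the specific combination $-\tfrac{m-2}{4}\bigl([L]+\tfrac{[J^0]}{4}\bigr)+\tfrac18[J^+]*[J^-]$ does; the coefficients are the entire content, since they are what later pins down $\ell_0=A(k,\nu)$ in Proposition \ref{zhu-osp}. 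Your write-up explicitly defers their determination (``I expect\dots'', ``the main obstacle is this last bookkeeping''), so what you have is a correct plan rather than a proof.

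The omitted computation has genuine content. Commuting $G^+_{(0)}$ through $(J^-_{(-1)})^{m-3}$ produces not only the first-order terms $(m-3)(J^-_{(-1)})^{m-4}(G^0)_{(-1)}G^-$ but also, because for $spo(2|3)$ the weight can be lowered twice, the second-order terms $\tfrac{(m-3)(m-4)}{2}(J^-_{(-1)})^{m-5}(G^-)_{(-2)}G^-$ --- precisely the phenomenon that forces the paper to treat $spo(2|3)$ separately (compare \eqref{GJN}, proved only for $\g\ne spo(2|3)$); your narrative of ``commuting past the $J$-factors'' does not account for this term, which supplies a $[J^-]*[J^-]$ contribution. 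One must then insert the three products $(G^+)_{(0)}G^-$, $(G^0)_{(0)}G^-$, $(G^-)_{(0)}G^-$ from \cite[Section 8.5]{KW1}, reduce via \eqref{Z}, specialize $k=-m/4$, and verify the nontrivial cancellation of the constants $-\tfrac{(m-3)(m-4)}{8}+\tfrac{(m-3)(m-4)}{8}=0$ together with the assembly of the $[J^0]$-coefficient into $-\tfrac{m-2}{16}$. Finally, the nonvanishing you flag as a ``subsidiary point'' ($V\ne0$, $c\ne0$) is not a separate issue to be settled by a weight count: the argument needs only that the computed class of $G^+_{(0)}\widetilde v_1^-$ \emph{equals} $\Omega*[J^-]^{m-3}$, and the nontriviality of the relation is read off from the coefficient $\tfrac{2k+1}{2}=-\tfrac{m-2}{4}\ne0$ of $[L]$ in $[(G^+)_{(0)}G^-]$ --- again an output of the very computation you have not performed.
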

\begin{proof} By Theorem \ref{C}, we have that $(J^-)_{(-1)}^{m-3} G^- \in I^k$, hence 
$[G^+ _{(0)}(J^-)_{(-1)}^{m-3} G^-] =0$. Since $G^+ _0$ acts as a derivation, using \eqref{Z} we get that
\begin{align} &0=[(G^+)_{(0)}(J^-)_{(-1)}^{m-3} G^-] = [(G^0)_{(-1)} (J^-)_{(-1)}^{m-4} G^-]  + \cdots +\nonumber\\     &[(J^-)_{(-1)}^{m-5} (G^{0}) _{(-1)}(J^-)_{(-1)} G^-]   +
[(J^-)_{(-1)}^{m-4} (G^+)_{(0)} G^-]  \nonumber \\
&=  (m-3)  [ (J^-)_{(-1)}^{m-4}  (G^0)_{(-1)} G^-] + \tfrac{ (m-3) (m-4)}{2}  [   (J^-)_{(-1)}^{m-5}  (G^-) _{(-2)} G^-] \nonumber\\&+ [ (J^-)_{(-1)}^{m-4}(G^+)_{(0)} G^-] 
\nonumber \\
&= (m-3) [(G^0)_{(-1)} G^+] * [J^{-}] ^{m-4} + \tfrac{ (m-3) (m-4)}{2}  [  (G^-)_{(-2)} G^-] * [J^{-}] ^{m-5}\nonumber\\ 
&+ [(G^+)_ {(0)} G^-] *[J^{-}] ^{m-3}.  \label{rel} \end{align}
Using $\lambda$--bracket formulas from \cite[Section 8.5]{KW1} we get that the following relations hold in $\mathcal W^k$:
\begin{align*} (G^+)_{(0)} G^- &= \tfrac{2k+1}{2} L + \tfrac{1}{8} :J^-  J^+: +\tfrac{k+1}{4} \partial J^0,  \\
  (G^0)_{(0)} G^- &=    \tfrac{1}{8} :J^- J^0:  -\tfrac{k+1}{2} \partial J^-, \\
  (G^-) _{(0)}G^- &=   - \tfrac{1}{4} :J^-   J^-:.
 \end{align*}
   This implies, in $Zhu(\mathcal W^k)$, 
\begin{align} \label{f1} [G^+ _{(0)}G^-] &=  \tfrac{2k+1}{2} ( [L] +\tfrac{1}{4 (2 k+1)} [J^+]* [J^-]  -\tfrac{k+1}{2 (2k+1)} [J^0] ), \\ 
 [G^0 _{(-1)} G^-] &= - [(G^0)_{(0)} G^ -] =     -\tfrac{1}{8} [J^0] *[J^-] - \tfrac{k+1}{2} [J^-]=   - \tfrac{1}{8} ( [J^0]  +4(k+1) ) *[J^- ],  \label{f2}\\  \ 
[G^- _{(-2)} G^- ] &= - [(G^-)_{(-1)} G^- ]  = [(G^-)_{(0)} G^- ] =   - \tfrac{1}{4} [J^-]  * [J^-]. \label{f3}
 \end{align}
Substituting  \eqref{f1}, \eqref{f2}, \eqref{f3} into \eqref{rel} and collecting $[J^-]^{m-3}$, we get \eqref{relZ}, where
\bea  \Omega &=&  \tfrac{2k+1}{ 2} [L] + \tfrac{1}{8} [J^+]*[J^-] - \tfrac{k+1}{4} [J^0]     - \tfrac{ (m-3) (m-4)}{8} - \tfrac{m-3}{8} ([J^0] + 4 (k+1) ).
 \nonumber  \\
 &=& -  \tfrac{m-2}{ 4}  [L]+   \tfrac{1}{8} [J^+]*[J^-]  + \tfrac{m-4}{16} [J^0]  - \tfrac{ (m-3) (m-4)}{8} - \tfrac{m-3}{8} ([J^0] - m+4) \nonumber 
\\\nonumber
&=&-\tfrac{m-2}{ 4}  [L] -\tfrac{m-2}{16} [J^0]   + \tfrac{1}{8} [J^+]*[J^-].
 \eea
  \end{proof}
\begin{proposition}  
\label{zhu-osp}

Let $  L_{k}[j,q]$ be an irreducible highest weight $\mathcal W_{k}$--module. Then it is isomorphic to exactly one of the following modules:

\begin{itemize}
\item  $L_{k}[j, q]$ with  $0 \le j \le m-4$ and  $q \in {\C}$;
\item $L_{k} [m-3, \frac{m-3}{4}]$;
\item   $L_{k} [m-2, \frac{m-2}{4}]$.
\end{itemize}
\end{proposition}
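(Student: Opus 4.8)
The plan is to use the Zhu algebra realization $Zhu(\mathcal W_k)\simeq(\C[L]\otimes U(sl(2)))/J(\g)$ from \eqref{sstar}, together with the explicit element of $J(\g)$ produced in Lemma \ref{rel-m4}. An irreducible highest weight $\mathcal W_k$-module $L_k[j,q]$ corresponds to an irreducible $Zhu(\mathcal W_k)$-module, and by the Zhu correspondence its top component is the irreducible highest weight $sl(2)$-module $V(j\omega_1)$ with $L$ acting by $q$. So the first step is to note that any such module already satisfies the integrability constraint coming from $v_1\in I^k$: since $v_1=(J^{\{x_{\theta_1}\}}_{(-1)})^{M_1(k)+1}\vac$ lies in $I^k$, the image of $sl(2)$ in $Zhu(\mathcal W_k)$ is integrable of level $m-2$, forcing $0\le j\le m-2$. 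This recovers the range $P^+_k=\{j\omega_1\mid 0\le j\le m-2\}$ and explains why $j$ is bounded.

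Next I would evaluate the relation \eqref{relZ}, namely $\Omega*[J^-]^{m-3}\in J(\g)$, on the top component $V(j\omega_1)$. The operator $[J^-]^{m-3}$ acts as the $(m-3)$-fold lowering operator; on a highest weight vector of $V(j\omega_1)$ this is nonzero precisely when $m-3\le j$, i.e. $j\in\{m-3,m-2\}$, and is zero when $0\le j\le m-4$. Therefore for $0\le j\le m-4$ the relation \eqref{relZ} imposes no constraint on $q$, giving the first family with $q\in\C$ arbitrary. For the two remaining cases $j=m-3$ and $j=m-2$, the vector $[J^-]^{m-3}$ applied to the highest weight vector lands on a nonzero weight vector, and requiring $\Omega$ to annihilate that vector determines $q$. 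Writing $\Omega=-\tfrac{m-2}{4}([L]+\tfrac{1}{4}[J^0])+\tfrac{1}{8}[J^+]*[J^-]$ and evaluating the Casimir-type combination $[J^+]*[J^-]$ together with the eigenvalue of $[J^0]$ on the relevant weight vector yields a single linear equation for $q$, whose solution should be $q=\frac{m-3}{4}$ and $q=\frac{m-2}{4}$ respectively, matching the values in $A(k,\nu)$ for the extremal weights. These are exactly the extremal cases where Theorem \ref{nec} forces equality in \eqref{eh}.

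To finish, I would check that the three families listed are genuinely realized and are distinct. Realizability follows from Theorem \ref{t1}: the modules $L^W(j\omega_1,q)$ with $0\le j\le m-4$ (non-extremal) and arbitrary $q$, together with $L^W((m-3)\omega_1,A(k,(m-3)\omega_1))$ and $L^W((m-2)\omega_1,A(k,(m-2)\omega_1))$ (extremal), all descend to $\mathcal W_k$. Pairwise inequivalence is immediate since the highest weight $(j,q)$ is an isomorphism invariant. Thus the classification reduces to showing that \eqref{relZ} is the \emph{only} new relation needed, i.e. that imposing $q=\frac{m-3}{4},\frac{m-2}{4}$ in the extremal cases is both necessary and sufficient.

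The main obstacle I anticipate is the sufficiency direction: Lemma \ref{rel-m4} gives one explicit element of $J(\g)$, which yields necessary conditions on $(j,q)$, but a priori $J(\g)$ could contain further elements imposing additional constraints and thereby shrinking the list. Showing that no such extra relations occur — i.e. that every pair $(j,q)$ surviving \eqref{relZ} really supports an irreducible $Zhu(\mathcal W_k)$-module — is precisely where one must invoke the existence half coming from Theorem \ref{t1} (the descent of the corresponding $L^W(\nu,\ell_0)$). In other words, the necessity (upper bound on the list) comes from the Zhu-algebra computation, while the sufficiency (lower bound, that each survivor is attained) comes from the Hamiltonian reduction of the $V_k(\g)$-modules classified in Theorem \ref{TA}; the delicate point is matching the two so that the surviving $q$-values in the extremal cases coincide exactly with $A(k,\nu)$, which requires the explicit evaluation of $\Omega$ described above to agree with \eqref{Aknu}.
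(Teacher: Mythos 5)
Your proposal is correct and follows essentially the same route as the paper: both pass to the Zhu algebra, evaluate the relation $\Omega*[J^-]^{m-3}\in J(\g)$ of Lemma \ref{rel-m4} on the top component $V(j\omega_1)\otimes\C_q$, observe that it is vacuous for $0\le j\le m-4$ and forces $q=j/4$ for $j=m-3,m-2$, and get realizability and distinctness from Theorem \ref{t1}. The only cosmetic difference is that the paper chooses $w$ so that $(J^-)_{(0)}^{m-3}w$ is the \emph{lowest} weight vector, which kills the $[J^+]*[J^-]$ term and makes the linear equation for $q$ immediate, whereas you evaluate at the image of the highest weight vector and must compute that term explicitly (which you indicate but do not carry out; it does give the same values).
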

 \begin{proof} First,  $j \in \{0, \dots, m-2\}$ and the top component is 
  $ L_k[j, q]_{top} =V(j \omega_1) \otimes {\C}_q, $
  where  $V(j \omega_1)$ is the $(j+1)$--dimensional irreducible $sl(2)$--module with highest weight $j \omega_1$, and $\C_q$ the $1$-dimensional ${\C}[L]$--module on which $L_0$--acts as multiplication with $q \in {\C}$. If $0 \le j \le m-4$, then
 $[J^-] ^{m-3} $ acts trivially on $ L_k[j, q]_{top} $ for  each $q \in {\C}$, hence  the same holds for the action 
of $ [{G^+}_{(0)} G^-]* [J^-] ^{m-3} $.
If $ m-3\le j \le m-2$, then   $(J^-)_{(0)}^{m-3}$  acts non-trivially on $ L_k[j, q]_{top} $. Hence there exists  $w \in  L_k[j, q]_{top}$ such that $ w' =(J^-)_{(0)}^{m-3} w$ is a lowest weight vector for $sl(2)$, i.e.
  $$(J^0)_{(0)}w' = -j w',\ (J^-)_{(0)} w' = 0. $$ 
 Then we have, by Lemma \ref{rel-m4}:
 \bea 
 0&=&  ( \Omega * [J^-] ^ {m-3} )w \nonumber \\
 &=&  \left(  -\tfrac{m-2}{4} ( L_0 + \tfrac{1}{4}(J^0)_{(0)} ) +\tfrac{1}{8}  (J^+)_{(0)} (J^-)_{(0)}  \right)  (J^-)_{(0)}^{m-3} w \nonumber \\
 &=&  \left(  -\tfrac{m-2}{4} ( L_0 + \tfrac{ 1}{4} (J^0)_{(0)}) +\tfrac{1}{8}  (J^+)_{(0)} (J^-)_{(0)} \right)  w' \nonumber \\
 &=&  -\tfrac{m-2}{4} (q -\tfrac{j}{4})w', \nonumber 
 \eea
hence, when $j=m-3$ or $m-2$, we have that $q= \tfrac{j}{4}$.
\end{proof}


Since the modules  appearing in Theorem \ref{t2} in case of $\g=spo(2|3)$ are exactly  those listed  in Proposition \ref{zhu-osp}, 
Theorem \ref{t2} is proved in this case.

\bigskip

\noindent{\it Proof of Theorem \ref{t2}: the case   $\g \ne  spo(2 \vert 3)$.}
We first illustrate  the strategy of the proof in the case $\g=psl(2|2)$. We use notation of \cite[Section 8.4]{KW1}.

  Set $ \mathcal W_{\min}^{k} =  W^{k}_{\min}(psl(2\vert 2))$ and  $\mathcal W^{\min}_{k} =  W^{\min}_{k}(psl(2\vert 2))$. 
Let $m = -k-1=M_1(k) \in {\Z}_{>0}$. Then  $V_k(sl(2))$ is a vertex subalgebra  of $\mathcal W^{\min}_{k}$ generated by $J^{\pm}, J^0$. The odd generators of conformal weight $3/2$ are $  G^{\pm}, \overline G^{\pm}$.
By Theorem \ref{C} the maximal ideal $I^k$ is generated by the singular vector $(J^+)_{(-1)}^{m+1} {\bf 1}$. Then  $(G^-)_{(0)} (\overline G^{-})_{(0)}(J^+_{(-1)})^{m+1} {\bf 1} \in I^k$. By the $\l$-bracket formulas given in \cite[Section 8.4]{KW1} we have
\begin{align}(G^-) _{(0)}  \overline G^{-}_{(0)} (J^+_{(-1)})^{m+1} {\bf 1} &= (m+1)  (G^-) _{(0)}   (J^+_{(-1)}) ^{m}   \overline G^+ \nonumber   \\ &=- (m+1) m   (J^+_{(-1)}) ^{m-1}
  (G^+)_{(-1)} \overline G^+ +  (m+1) (J^+_{(-1)}) ^{m}  (G^-) _{(0)}  \overline G^+. \label{rr3} \end{align}

 Using the  definition of the Zhu algebra $Zhu(\mathcal W^k_{\min})$, we have that for $G =G^{\pm}$ or $G =\overline G^{\pm}$  and each $v \in \mathcal W^k_{\min}$:
\begin{equation}\label{69a} \mbox{Res}_z \frac{(z+1) ^{\deg G-1/2}}{z} G(z) v = (G_{(-1)} + G_{(0)} ) v = 0 \ \text{in } Zhu(\mathcal W^k_{\min}). \end{equation}
Using \eqref{69a} and the $\l$-bracket formulas given in \cite[Section 8.4]{KW1}, we get
 \begin{align}
 &[  (G^+)_{(-1)} \overline G^+] = - [(G^+)_{(0)} \overline G^+] =- [(J^+)_{(-2)}{\bf 1}] =  [J^+],  \label{rr1}\\
 &[  (G^-) _{(0)} \overline G^+] =   [L] - \tfrac{1}{2} [(J^{0})_{(-2)}{\bf 1}] = [L] +  \tfrac{1}{2} [J^0],\label{rr2}\end{align}
Using \eqref{rr3}, \eqref{rr1},\eqref{rr2} and \eqref{Z}, we find
 \begin{align*}
 [(G^-)_{(0)}   (\overline G^{-})_{(0)} (J^+_{(-1)})^{m+1} {\bf 1}] 
&= - m (m+1) [J^+ ]^m + (m+1)   ([L]  +  \tfrac{1}{2} [J^0] ) *   [J^+]^{m} \\
&= (m+1) \left( [L]  +  \tfrac{1}{2} [J^0]  -m \right)  *   [J^+]^{m}
 \end{align*}
 so that, setting $ \Omega =  [L]  +  \tfrac{1}{2} [J^0]  -m$, we have from \eqref{69a}  \begin{equation}\label{relpsl22}
 \Omega*[J^+]^m\in J(\g).\end{equation}
 \begin{proposition}  \label{zhu-N4}
Let $  L^{N4} _{k}[j,\ell_0]$ be an irreducible highest weight $\mathcal W^{\min}_{k}$--module. Then it is isomorphic to exactly one of the following modules:
\begin{itemize}
\item  $L^{N4}_{k}[j, \ell_0]$ with  $0 \le j \le m-1$ and  $\ell_0 \in {\C}$;
\item  $L^{N4}_{k} [m, \frac{m}{2}]$.
\end{itemize}
\end{proposition}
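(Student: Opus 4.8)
The plan is to argue exactly as in the proof of Proposition \ref{zhu-osp}, now driving everything from the single relation \eqref{relpsl22}. First I would describe the top component of an irreducible highest weight module $L^{N4}_{k}[j,\ell_0]$: under the correspondence between irreducible positive energy $\mathcal W^{\min}_{k}$-modules and irreducible $Zhu(\mathcal W^{\min}_{k})$-modules, the top is $V(j\omega_1)\otimes\C_{\ell_0}$, with $V(j\omega_1)$ the $(j+1)$-dimensional irreducible $sl(2)$-module and $[L]$ acting by the scalar $\ell_0$. Since $\mathcal W^{\min}_{k}$ contains the affine vertex subalgebra generated by $J^{\pm},J^0$, of level $M_1(k)=m$, this $sl(2)$-module must be the top of an integrable module, which forces $j\in\{0,1,\dots,m\}$.

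Next I would evaluate the relation \eqref{relpsl22}, $\Omega*[J^+]^m\in J(\g)$ with $\Omega=[L]+\tfrac12[J^0]-m$, on each such top component, recalling that $[J^+]$, $[J^0]$, $[L]$ act there by the zero modes $J^+_{(0)}$ (the raising operator of $sl(2)$), $J^0_{(0)}$ and $L_0$, and that $*$ corresponds to composition of these operators. For $0\le j\le m-1$ the raising operator is nilpotent of order $j+1\le m$ on $V(j\omega_1)$, so $(J^+_{(0)})^m=0$ there; hence $\Omega*[J^+]^m$ acts as zero automatically and imposes no condition on $\ell_0$, yielding the first family, with $\ell_0\in\C$ arbitrary.

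For $j=m$ the operator $(J^+_{(0)})^m$ is nonzero on $V(m\omega_1)$: taking $w$ to be a lowest weight vector, the vector $w'=(J^+_{(0)})^m w$ is a nonzero highest weight vector, so $J^+_{(0)}w'=0$ and $J^0_{(0)}w'=m\,w'$, whence
$$0=(\Omega*[J^+]^m)\,w=\Omega\,w'=\big(\ell_0+\tfrac12 m-m\big)w'=\big(\ell_0-\tfrac{m}{2}\big)w',$$
forcing $\ell_0=\tfrac{m}{2}$ and singling out $L^{N4}_{k}[m,\tfrac{m}{2}]$. This shows that any irreducible highest weight $\mathcal W^{\min}_{k}$-module lies in the stated list.

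Finally I would check that every module in the list actually occurs, which is already guaranteed by Theorem \ref{t1}: the weights $j\omega_1$ with $j\le m-1$ are non-extremal (so $\ell_0$ is free), while $j=m$ is extremal, and a direct evaluation of \eqref{Aknu} gives $A(k,m\omega_1)=\tfrac{m}{2}$, in agreement with the value of $\ell_0$ forced above. I do not expect a serious obstacle, since the decisive relation \eqref{relpsl22} has already been established; the one point needing care is completeness, namely that this single element of $J(\g)$ already cuts the Zhu-module list down to the claimed one. This is secured precisely because the resulting list coincides with the independently known descent list of Theorem \ref{t1}, so no further generators of $J(\g)$ can remove any of these modules.
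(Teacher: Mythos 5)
Your proposal is correct and follows essentially the same route as the paper: identify the top component $V(j\omega_1)\otimes\C_{\ell_0}$, note $j\le m$ by integrability, observe that $(J^+_{(0)})^m$ kills $V(j\omega_1)$ for $j\le m-1$ so no condition arises, and for $j=m$ evaluate $\Omega*[J^+]^m$ on $(J^+_{(0)})^m w$ for $w$ a lowest weight vector to force $\ell_0=\tfrac{m}{2}$. Your closing remark — that realizability of every module in the list is supplied by Theorem \ref{t1}, which also guarantees no further generator of $J(\g)$ can cut the list down — is a point the paper leaves implicit here but uses explicitly in the general case, and it is a sound way to close the argument.
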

 \begin{proof} Let $v[j,\ell_0]$ be a  highest weight vector for $L^{N4} _{k}[j,\ell_0]$. Then $j \in \{0, \dots, m\}$ and the top component is 
  $ L^{N4}_k[j, \ell_0]_{top} =V(j \omega_1) \otimes {\C}_{\ell_0}, $
  where  $V(j \omega_1)$ is the $(j+1)$--dimensional irreducible $sl(2)$--module with highest weight $j \omega_1$, and ${\C}_{\ell_0}$ the $1$-dimensional ${\C}[L]$--module on which $L(0)$--acts as multiplication by $\ell_0 \in {\C}$. If $0 \le j \le m-1$, then
 $  \Omega * [J^+] ^{m} $ acts trivially on $ L^{N4}_k[j, \ell_0]_{top} $ for  each $\ell_0 \in {\C}$.
 
If $ j=m$, then   $(J^+_{(0)})^{m}$  acts non-trivially on $ L^{N4}_k[j, \ell_0]_{top} $. Then  there  exists  $w \in  L^{N4} _k[j, \ell_0]_{top}$ such that $ (J^+_{(0)})^m  w = v[j,\ell_0]$ is a  highest  weight vector for $sl(2)$.   We get by \eqref{69a}
 \bea 
 0&=&  ( \Omega * [J^+] ^ {m} ) w \nonumber \\
 &=&  (L_{(0)}+   \tfrac{1}{2}   (J^0_{(0)})= -m  (J^+_{(0)})^{m} w \nonumber \\
 &=&  \left(  \ell_0-m/2 \right)  v[j,\ell_0]. \nonumber 
  \eea
This implies that for $ j=m $ we need to have $\ell_0= \tfrac{m}{2}$.
\end{proof}

Proposition \ref{zhu-N4} proves, in particular, Theorem \ref{t2} for $\g=psl(2|2)$.
We now deal with  the general case $\g\ne spo(2|3)$. We shall see that a relation similar to \eqref{relpsl22} holds in $Zhu(\Wu)$. We do not need a very precise expression for $\Omega$: what is really relevant is the fact that the action of $\Omega$  gives  a relation which is linear in $\ell_0$.\par
We
start by  observing that there exist two odd positive roots $\gamma_1,\gamma_2$  such that 
\begin{equation}\label{rcfat}\theta-\gamma_1-\gamma_2=-\theta_i,\ i\in S.\end{equation} This fact can be verified directly by looking at Table 3. 
\begin{table}\label{rootdata}
\begin{tabular}{c | c| c |c | c | c |c}
$\g$&
$spo(2|3)$&
$psl(2|2)$&
$spo(2|m), m>4$&
$D(2,1;\tfrac{m}{n})$&
$F(4)$&
$G(3)$\\\hline
$\gamma_1$&$\d_1+\e_1$&$\e_1-\d_2$&$\d_1+\e_1$&$\e_1+\e_2-\e_3, \e_1+\e_2+\e_3$
&$\frac{1}{2}(\d_1+\e_1+\e_2-\e_3)$&$\d_1-\e_3$
\\\hline
$\gamma_2$&$\d_1$&$\d_1-\e_2$&$\d_1+\e_2$&$\e_1+\e_2+\e_3, \e_1-\e_2+\e_3$ &$\frac{1}{2}(\d_1+\e_1+\e_2+\e_3)$ & $\d_1+\e_2$
\\\hline
$\theta_i$&$\e_1$&$\d_1-\d_2$&$\e_1+\e_2$&$2\e_2,2\e_3$&$\e_1+\e_2$&$\e_2-\e_3$
\\\hline
$\theta$&$2\d_1$& $\e_1-\e_2$&$2\d_1$&$2\e_1$&$\d_1$&$2\d_1$
\end{tabular}
\caption{Root data}
\end{table}
Using \eqref{rcfat} and  the explicit expression for  $[{G^{\{u\}}}_{\lambda}G^{\{v\}}]$ given in \cite[(1.1)]{AKMPP} we find
\begin{align}\label{11}
&{G^{\{x_{\theta_i-\gamma_1}\}}}_{(0)}G^{\{x_{\theta_i-\gamma_2}\}}=
\sum_{s=1}^{\dim\g_{1/2}}:J^{\{[x_{\theta_i-\gamma_1},u^s]^\natural\}}J^{\{[u_s,x_{\theta_i-\gamma_2}]^\natural\}}:
+2c_1(k+1)\partial J^{\{x_{\theta_i}\}},\\
&\label{22}{G^{\{x_{-\gamma_1}\}}}_{(0)}G^{\{x_{\theta_i-\gamma_2}\}}=-2(k+h^\vee)c_2L+c_2\sum_{\alpha=1}^{\dim \g^\natural} 
:J^{\{a^\alpha\}}J^{\{a_\alpha\}}:+\\\notag
&\sum_{s=1}^{\dim\g_{1/2}}:J^{\{[x_{-\gamma_1},u^s]^\natural\}}J^{\{[u_s,x_{\theta_i-\gamma_2}]^\natural\}}:
+2(k+1)c_3\partial J^{\{h^\natural_{-\theta_i+\gamma_2}\}},
\end{align}
where $c_1,c_2,c_3$ are constants independent of $k$. Here $a^\natural$ denotes the orthogonal projection of $a\in \g$ to $\g^\natural$ and $\{u_s\},\, \{u^s\}$ are basis of $\g_{1/2}$ dual with respect to  the bilinear form \begin{equation}\label{pizz}\langle u,v\rangle=(x_{-\theta}|[u,v]).\end{equation}
Set $x_{\theta_i-\gamma_j}=[x_{-\gamma_j},x_{\theta_i}],\,j=1,2$.  Recall that , by  \cite[Theorem 2.1 (e)]{KW1},  if  $u\in \g_{-1/2}$ and $a\in\g^\natural$, then 
\begin{equation}\label{JG}
[J^{\{a\}}_{(-1)}, G^{\{u\}}_{(0)}]=G^{\{[a,u]\}}_{(-1)}.\end{equation}
\begin{lemma} If $\g\ne spo(2|3)$, $N\in\mathbb Z_{\ge 1}$ and  $M\in \mathbb Z_{\geq 2}$, then, for $i\in S$ we have 
\begin{equation} \label{GJN}[G_{(0)}^{\{x_{-\gamma_j}\}},(J_{(-1)}^{\{x_{\theta_i}\}})^N]=-N(J_{(-1)}^{\{x_{\theta_i}\}})^{N-1}G_{(-1)}^{\{x_{\theta_i-\gamma_j}\}},\quad j=1,2.
\end{equation}
\begin{align}
\notag&(G^{\{x_{-\gamma_1}\}})_{(0)} (G^{\{x_{-\gamma_2}\}})_{(0)}(J^{\{x_{\theta_i}\}}_{(-1)})^{M}\vac=\\
 &M\left((M-1)(J^{\{x_{\theta_i}\}}_{(-1)})^{M-2}(G^{\{x_{\theta_i-\gamma_1}\}})_{(-1)}
 (G^{\{x_{\theta_i-\gamma_2}\}})_{(-1)}\vac \right. \label{33}\\
&-\left. (J^{\{x_{\theta_i}\}}_{(-1)})^{M-1}(G^{\{x_{-\gamma_1}\}})_{(0)}(G^{\{x_{\theta_i-\gamma_2}\}})\right).\notag
\end{align}
\end{lemma}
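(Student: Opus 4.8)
The plan is to prove the commutator rule \eqref{GJN} first and then to deduce \eqref{33} from it by applying it twice, using the vacuum-annihilation property $G^{\{u\}}_{(0)}\vac=0$. By the grading \eqref{1}, $\gamma_j\in\g_{1/2}$ forces $x_{-\gamma_j}\in\g_{-1/2}$, while $x_{\theta_i}\in\gn$ and $x_{\theta_i-\gamma_j}=[x_{-\gamma_j},x_{\theta_i}]\in\g_{-1/2}$. The $\lambda$-bracket $[J^{\{a\}}_\lambda G^{\{u\}}]=G^{\{[a,u]\}}$ of \cite[Theorem 2.1]{KW1}, valid for $a\in\gn$ and $u\in\g_{-1/2}$, is independent of $\lambda$, so in mode form $[J^{\{a\}}_{(m)},G^{\{u\}}_{(n)}]=G^{\{[a,u]\}}_{(m+n)}$ for all $m,n$; relation \eqref{JG} is the case $(m,n)=(-1,0)$.

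For \eqref{GJN}, set $A=G^{\{x_{-\gamma_j}\}}_{(0)}$ and $B=J^{\{x_{\theta_i}\}}_{(-1)}$. Since $B$ is even, \eqref{JG} gives the single commutator $[A,B]=-G^{\{x_{\theta_i-\gamma_j}\}}_{(-1)}$, and the Leibniz expansion $[A,B^{N}]=\sum_{p=0}^{N-1}B^{p}[A,B]B^{N-1-p}$ (with no extra signs, $B$ being even) collapses to $-N\,B^{N-1}G^{\{x_{\theta_i-\gamma_j}\}}_{(-1)}$ as soon as $[B,G^{\{x_{\theta_i-\gamma_j}\}}_{(-1)}]=0$. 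By the mode formula above, this inner commutator equals $G^{\{[x_{\theta_i},x_{\theta_i-\gamma_j}]\}}_{(-2)}$, so \eqref{GJN} will follow once I show $[x_{\theta_i},x_{\theta_i-\gamma_j}]=0$.

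This vanishing is the heart of the argument and the only place where the hypothesis $\g\ne spo(2|3)$ is used. Working inside the $\gn$-module $\g_{-1/2}$ we have $x_{\theta_i-\gamma_j}=-\ad(x_{\theta_i})\,x_{-\gamma_j}$, hence $[x_{\theta_i},x_{\theta_i-\gamma_j}]=-\ad(x_{\theta_i})^{2}\,x_{-\gamma_j}$. Since $\xi(\theta_i^\vee)=-\chi_i=1$, for each $\g$ in Table 3 the module $\g_{-1/2}$ restricts, on the $sl(2)$-triple with positive root $\theta_i$, to a sum of trivial and two-dimensional representations; equivalently all its $\theta_i^\vee$-weights lie in $\{-1,0,1\}$ (this can be read off Table 3, or from the explicit $\gn$-module $\g_{-1/2}$). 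As $\ad(x_{\theta_i})$ raises the $\theta_i^\vee$-weight by $2$, this forces $\ad(x_{\theta_i})^{2}=0$ on $\g_{-1/2}$, and therefore $[x_{\theta_i},x_{\theta_i-\gamma_j}]=0$. For $spo(2|3)$ this fails, since there $2\theta_1-\gamma_1=\e_1-\d_1$ is a root and $\ad(x_{\theta_1})^2x_{-\gamma_1}\ne0$; the surviving inner commutator is exactly what produces the extra terms of Lemma \ref{rel-m4}, which is why that case is handled separately.

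Finally I would obtain \eqref{33} by iterating \eqref{GJN}. Applying it with $j=2$, $N=M$, and using $G^{\{x_{-\gamma_2}\}}_{(0)}\vac=0$, gives $G^{\{x_{-\gamma_2}\}}_{(0)}(J^{\{x_{\theta_i}\}}_{(-1)})^{M}\vac=-M(J^{\{x_{\theta_i}\}}_{(-1)})^{M-1}G^{\{x_{\theta_i-\gamma_2}\}}_{(-1)}\vac$. Now I would apply $G^{\{x_{-\gamma_1}\}}_{(0)}$ and commute it through $(J^{\{x_{\theta_i}\}}_{(-1)})^{M-1}$ using \eqref{GJN} with $j=1$, $N=M-1$ (admissible because $M\ge2$): the commutator term yields $M(M-1)(J^{\{x_{\theta_i}\}}_{(-1)})^{M-2}(G^{\{x_{\theta_i-\gamma_1}\}})_{(-1)}(G^{\{x_{\theta_i-\gamma_2}\}})_{(-1)}\vac$, while the term carrying $G^{\{x_{-\gamma_1}\}}_{(0)}$ to the right yields $-M(J^{\{x_{\theta_i}\}}_{(-1)})^{M-1}(G^{\{x_{-\gamma_1}\}})_{(0)}(G^{\{x_{\theta_i-\gamma_2}\}})$. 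No sign corrections appear because the odd operator $G^{\{x_{-\gamma_1}\}}_{(0)}$ is moved only past even operators $J^{\{x_{\theta_i}\}}_{(-1)}$. This is precisely \eqref{33}. The main obstacle is thus the weight computation of the third paragraph; the rest is a routine application of the Leibniz rule.
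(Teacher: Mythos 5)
Your argument is correct and follows the paper's proof in all essentials: your Leibniz expansion of $[G^{\{x_{-\gamma_j}\}}_{(0)},(J^{\{x_{\theta_i}\}}_{(-1)})^N]$ is exactly the paper's induction on $N$, and the crucial commutation of $G^{\{x_{\theta_i-\gamma_j}\}}_{(-1)}$ past $J^{\{x_{\theta_i}\}}_{(-1)}$ rests on the same fact, which the paper records as $2\theta_i-\gamma_j\notin\D$ for $\g\ne spo(2|3)$. Your only departure is to justify that vanishing uniformly from $\xi(\theta_i^\vee)=-\chi_i=1$, whence $\ad(x_{\theta_i})^2=0$ on $\g_{-1/2}$, rather than by inspection of Table 3; the deduction of \eqref{33} from \eqref{GJN} is then identical to the paper's.
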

\begin{proof} We prove \eqref{GJN}  by induction on $N$, with base $N=1$ given by  \eqref{JG}. If $N>1$ we have 
\begin{align*}&[G_{(0)}^{\{x_{-\gamma_j}\}},(J_{(-1)}^{\{x_{\theta_i}\}})^N]=[G_{(0)}^{\{x_{-\gamma_j}\}},(J_{(-1)}^{\{x_{\theta_i}\}})^{N-1}](J_{(-1)}^{\{x_{\theta_i}\}})+
(J_{(-1)}^{\{x_{\theta_i}\}})^{N-1}[G_{(0)}^{\{x_{-\gamma_j}\}},(J_{(-1)}^{\{x_{\theta_i}\}})]\\
&=(1-N)(J_{(-1)}^{\{x_{\theta_i}\}})^{N-2}G_{(-1)}^{\{x_{\theta_i-\gamma_j}\}}(J_{(-1)}^{\{x_{\theta_i}\}})-(J_{(-1)}^{\{x_{\theta_i}\}})^{N-1}G_{(-1)}^{\{x_{\theta_i-\gamma_j}\}}\\
&=-N(J_{(-1)}^{\{x_{\theta_i}\}})^{N-1}G_{(-1)}^{\{x_{\theta_i-\gamma_j}\}}.\end{align*}
In the final equality we have used that $2\theta_i-\gamma_j\notin \D$  for $\g\ne spo(2|3)$.\par
We now prove \eqref{33}: \begin{align*}
(G^{\{x_{-\gamma_1}\}})_{(0)} (G^{\{x_{-\gamma_2}\}})_{(0)}(J^{\{x_{\theta_i}\}}_{(-1)})^{M}\vac&=
-M(G^{\{x_{-\gamma_1}\}})_{(0)} (J^{\{x_{\theta_i}\}}_{(-1)})^{M-1}(G^{\{x_{\theta_i-\gamma_2}\}})_{(-1)}\vac\\
&=M(M-1) (J^{\{x_{\theta_i}\}}_{(-1)})^{M-2}(G^{\{x_{\theta_i-\gamma_1}\}})_{(-1)}(G^{\{x_{\theta_i-\gamma_2}\}})_{(-1)}\vac\\
&-M(J^{\{x_{\theta_i}\}}_{(-1)})^{M-1}(G^{\{x_{-\gamma_1}\}})_{(0)}(G^{\{x_{\theta_i-\gamma_2}\}})\vac.
\end{align*}
\end{proof}
By Theorem \ref{C}, if $M=M_i(k)+1$, then the element displayed in \eqref{33} lies in $I^k$, hence its projection to $Zhu(\Ws)$ lies in $J(\g)$.  Next  we calculate explicitly this projection. First remark  that, by \eqref{69a},
\begin{equation}\label{4}[(G^{\{x_{\theta_i-\gamma_1}\}})_{(-1)}
 (G^{\{x_{\theta_i-\gamma_2}\}})_{(-1)}\vac]=-[(G^{\{x_{\theta_i-\gamma_1}\}})_{(0)}
 (G^{\{x_{\theta_i-\gamma_2}\}})].
\end{equation}
Substituting \eqref{4} into \eqref{33} and using \eqref{11}, \eqref{22}, we get 
\begin{align}\label{prima}
&\frac{1}{M_i(k)+1}[(G^{\{-\gamma_1\}})_{(0)} (G^{\{-\gamma_2\}})_{(0)}(J^{\{x_{\theta_i}\}}_{(-1)})^{M_i(k)+1}\vac]=\\\notag
 &-M_i(k)\left(\sum_{\gamma=1}^{\dim\g_{1/2}}[J^{\{[u_s,x_{\theta_i-\gamma_2}]^\natural\}}]*[J^{\{[x_{\theta_i-\gamma_1},u^s]^\natural\}}]*[J^{\{x_{\theta_i}\}}]^{M_i(k)-1}
-2(k+1)c_1[J^{\{x_{\theta_i}\}}]^{M_i(k)}\right)\\\notag
 &+c_2\left(-(k+h^\vee)[L]+\tfrac{1}{2}\sum_{\alpha=1}^{\dim \g^\natural} 
[J^{\{a_\alpha\}}][J^{\{a^\alpha\}}]\right)*[J^{\{x_{\theta_i}\}}]^{M_i(k)}+\\\notag
&-\left(\sum_{s=1}^{\dim\g_{1/2}}[J^{\{[u_s,x_{\theta_i-\gamma_2}]^\natural\}}]*[J^{\{[x_{-\gamma_1},u^s]^\natural\}}]
+2(k+1)c_3[\partial J^{\{h^\natural_{-\theta_i+\gamma_2}\}}]\right)*[J^{\{x_{\theta_i}\}}]^{M_i(k)}.
 \end{align}
 To finish the calculation we need the following fact.
\begin{lemma} For suitable constants $d_1,d_2,d_3$, independent of $k$,  we have
\begin{align}\notag\sum_{s=1}^{\dim\g_{1/2}}[J^{\{[u_s,x_{\theta_i-\gamma_2}]^\natural\}}]*[J^{\{[x_{\theta_i-\gamma_1},u^s]^\natural\}}]&=d_1\, [J^{\{h^\natural_{-\theta_i+\gamma_2}\}}]*[J^{\{x_{\theta_i}\}}]+
 d_2\, [J^{\{x_{\theta_i}\}}]*[J^{\{h^\natural_{-\theta_i+\gamma_1}\}}]\\
&\label{55}= (d_1\, [J^{\{h^\natural_{-\theta_i+\gamma_2}\}}]+
 d_2\,[J^{\{h^\natural_{-\theta_i+\gamma_1}\}}]+d_3)*[J^{\{x_{\theta_i}\}}].\end{align}
\end{lemma}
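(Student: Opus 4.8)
The plan is to transport the whole identity into the Zhu algebra $Zhu(\Wu)\cong\C[L]\otimes U(\g^\natural)$ of \eqref{star}. Under this isomorphism the classes $[J^{\{a\}}]$, $a\in\g^\natural$, generate the tensor factor $U(\g^\natural)$, with $[J^{\{a\}}]\mapsto a$ and the $*$-product corresponding to the associative product of $U(\g^\natural)$: both factors of each summand in \eqref{55} have conformal weight $1$, so the affine currents close among themselves and no $[L]$ is produced. Thus \eqref{55} is an identity between two elements of $U(\g^\natural)$ of PBW-degree $\le 2$. First I would do the weight bookkeeping. Since $\g^\natural$ centralizes $e=x_\theta$ one has $\theta|_{\h^\natural}=0$, whence \eqref{rcfat} gives $(\gamma_1+\gamma_2)|_{\h^\natural}=\theta_i$; moreover the dual vector $u^s$ has $\h^\natural$-weight $-\mu_s$ whenever $u_s$ has weight $\mu_s$ (because $\langle u_s,u^t\rangle\neq0$ forces the full weights to add up to $\theta$, and $\theta|_{\h^\natural}=0$). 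Combining these, every summand $[u_s,x_{\theta_i-\gamma_2}]^\natural\,[x_{\theta_i-\gamma_1},u^s]^\natural$ carries $\h^\natural$-weight exactly $\theta_i$, matching the weight of $h^\natural\cdot x_{\theta_i}$ on the right.

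Next I would realize the sum as the value of a $\g^\natural$-equivariant map. The form $\langle u,v\rangle=(x_{-\theta}|[u,v])$ is $\g^\natural$-invariant (again because $\g^\natural$ fixes $e$), so the canonical tensor $\Omega=\sum_s u_s\otimes u^s\in\g_{1/2}\otimes\g_{1/2}$ is $\g^\natural$-invariant. Hence, for $p,q\in\g_{-1/2}$, the assignment $\Psi(p\otimes q)=\sum_s[u_s,p]^\natural\otimes[q,u^s]^\natural$ is a $\g^\natural$-module map $\g_{-1/2}\otimes\g_{-1/2}\to\g^\natural\otimes\g^\natural$ (Jacobi together with invariance of $\Omega$), and the left-hand side of \eqref{55} is the image in $U(\g^\natural)$ of $\Psi(x_{\theta_i-\gamma_2}\otimes x_{\theta_i-\gamma_1})$. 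Splitting the sum by the $\h^\natural$-weight $\nu_s$ of the first factor, the summands with $\nu_s=0$ have $[u_s,x_{\theta_i-\gamma_2}]^\natural\in\h^\natural$ paired against a factor of weight $\theta_i$, i.e. a multiple of $x_{\theta_i}$; computing the bracket shows this Cartan element is a multiple of $h^\natural_{-\theta_i+\gamma_2}$, which produces the $d_1$-term. Symmetrically, the summands with $\theta_i-\nu_s=0$ produce $x_{\theta_i}\cdot h^\natural_{-\theta_i+\gamma_1}$, the $d_2$-term.

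The heart of the matter, and the step I expect to be the main obstacle, is to control the remaining ``mixed'' summands, namely those for which both $\nu_s$ and $\theta_i-\nu_s$ are nonzero roots of $\g^\natural$: these a priori feed genuine degree-two monomials $x_\alpha x_\beta$ (with $\alpha+\beta=\theta_i$, $\alpha,\beta\neq0,\theta_i$) into the leading symbol, and the claim is precisely that they reorganize into the stated $h^\natural\,x_{\theta_i}$-shape. I would settle this by combining the equivariance of $\Psi$ with the explicit root realizations of Table 3 and the choice \eqref{rcfat} of $\gamma_1,\gamma_2$, evaluating the finitely many relevant structure constants in each of the cases $\g^\natural=sl(2)$, $so(m)$, $sl(2)\oplus sl(2)$, $so(7)$, $G_2$; the $sl(2)$ factors are immediate since $\theta_i$ admits no decomposition into two roots, and the remaining factors must be checked directly. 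Once the degree-two part is known to be $(\text{const})\,h^\natural x_{\theta_i}$, the first equality in \eqref{55} follows, and the second is purely formal: moving $h^\natural_{-\theta_i+\gamma_1}$ to the left of $x_{\theta_i}$ via $[x_{\theta_i},h^\natural_{-\theta_i+\gamma_1}]=-\theta_i(h^\natural_{-\theta_i+\gamma_1})\,x_{\theta_i}$ turns $d_2\,x_{\theta_i}*h^\natural_{-\theta_i+\gamma_1}$ into $d_2\,h^\natural_{-\theta_i+\gamma_1}*x_{\theta_i}$ plus a scalar multiple of $x_{\theta_i}$, which is exactly the constant $d_3=-d_2\,\theta_i(h^\natural_{-\theta_i+\gamma_1})$. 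The constants $d_1,d_2$ are read off from the two Cartan coefficients, and are manifestly independent of $k$ because the entire computation takes place inside $U(\g^\natural)$.
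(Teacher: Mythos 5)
Your skeleton is the right one and matches the paper's in spirit (pass to $U(\g^\natural)$ via the Zhu isomorphism, check that every summand has $\h^\natural$-weight $\theta_i$, identify the two Cartan configurations with the $d_1$- and $d_2$-terms, and obtain $d_3$ by commuting $h^\natural_{-\theta_i+\gamma_1}$ past $x_{\theta_i}$). But the step you yourself flag as ``the heart of the matter'' is left unexecuted, and it is precisely the entire content of the lemma. The paper's proof consists of the assertion that, after reducing to root vectors, a direct inspection shows the two brackets $[u_s,x_{\theta_i-\gamma_2}]^\natural$ and $[x_{\theta_i-\gamma_1},u^s]^\natural$ are \emph{simultaneously} nonzero only in the two Cartan configurations $-\a=\theta_i-\gamma_2,\ \beta=\gamma_1$ and $\a=\gamma_2,\ -\beta=\theta_i-\gamma_1$; in other words, the mixed summands you worry about do not occur at all for the specific $\gamma_1,\gamma_2$ of Table 3. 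You instead entertain the possibility that mixed monomials $x_\a x_\beta$ with $\a+\beta=\theta_i$, $\a,\beta\ne 0,\theta_i$ survive and ``reorganize into the stated $h^\natural x_{\theta_i}$-shape''. That fallback cannot work: by PBW, the images of such monomials in $S(\g^\natural)$ are linearly independent from $h^\natural x_{\theta_i}$ and $x_{\theta_i}$, and reordering $x_\a x_\beta$ only produces $x_\beta x_\a$ plus a multiple of $x_{\theta_i}$. So if any mixed terms survived with nonzero total symmetrized coefficient, the identity \eqref{55} would simply be false. Hence the lemma stands or falls with the combinatorial fact you defer, and ``I would evaluate the finitely many structure constants'' is not a proof of it.

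Note also that your remark that weight considerations alone settle the $sl(2)$ factors is correct (there $\theta_i$ is not a sum of two nonzero roots), but for $\g^\natural=so(m)$, $so(7)$ and $G_2$ the root $\theta_i$ does decompose as a sum of two positive roots, so the weight bookkeeping genuinely does not suffice: e.g.\ for $spo(2|m)$, $m>4$, taking $u_s$ of $\h^\natural$-weight $\e_j$ with $j\ge 3$ gives bracket weights $\e_1+\e_j$ and $\e_2-\e_j$, both roots of $so(m)$. Ruling these configurations out (or exhibiting their cancellation) requires the actual inspection of the relevant structure constants, which is exactly the step the paper records and your write-up omits.
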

\begin{proof} We can assume that $u_s, u^s$ are root vectors. A direct inspection shows that $[u_s,x_{\theta_i-\gamma_2}]^\natural,$ $ [x_{\theta_i-\gamma_1},u^s]^\natural$ are both non-zero only if $u_s\in\g_\a,\,u^s\in\g_\beta$
 with either  $-\a=\theta_i-\gamma_{2}$ and $\beta=\gamma_1$ or  $\a=\gamma_{2}$ and 
$-\beta=\theta_i-\gamma_{1} $. \end{proof}
Set
\begin{align}\label{omega}
\Omega_i&=(-M_i(k)(d_1\, [J^{\{h^\natural_{-\theta_i+\gamma_2}\}}]+
 d_2\,[J^{\{h^\natural_{-\theta_i+\gamma_1}\}}]+d_3)
 -c_2(k+h^\vee)[L]\\&+\tfrac{1}{2}c_2\sum_{\alpha=1}^{\dim \g^\natural} 
[J^{\{a_\alpha\}}][J^{\{a^\alpha\}}]+\sum_{s=1}^{\dim\g_{1/2}}[J^{\{[u_s,x_{\theta_i-\gamma_2}]^\natural\}}]*[J^{\{[x_{-\gamma_1},u^s]^\natural\}}]
-2(k+1)c_3[J^{\{h^\natural_{-\theta_i+\gamma_2}\}}]).\notag
\end{align}
Note that all the constants involved in \eqref{omega} do not depend on $k$; moreover, by \eqref{rcfat}, 
\begin{equation}\label{c2}c_2=(x_\theta|[x_{\theta_i-\gamma_1},x_{\theta_i-\gamma_2}])\ne 0.\end{equation}
Substituting \eqref{55} in \eqref{prima} we obtain
\begin{prop}  \label{Zhu-general} In $Zhu(\Wu)$,  we have
\begin{equation}
\Omega_i*[J^{\{x_{\theta_i}\}}]^{M_i(k)}\in J(\g),\ i\in S.
 \end{equation}

\end{prop}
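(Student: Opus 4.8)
The plan is to manufacture the desired element of $J(\g)$ by pushing a suitable element of the maximal ideal $I^k$ into the Zhu algebra and then recognizing the result, via the factorization \eqref{55}, as a left multiple of $[J^{\{x_{\theta_i}\}}]^{M_i(k)}$.

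First I would invoke Theorem \ref{C}: the generator $v_i=(J^{\{x_{\theta_i}\}}_{(-1)})^{M_i(k)+1}\vac$ lies in $I^k$. Since $I^k$ is a vertex-algebra ideal it is stable under every field mode, so acting with $(G^{\{x_{-\gamma_1}\}})_{(0)}(G^{\{x_{-\gamma_2}\}})_{(0)}$ keeps us inside $I^k$; this produces exactly the left-hand side of \eqref{33} with $M=M_i(k)+1$. Because $I^k$ is the kernel of $\Wu\to\Ws$, its image in $Zhu(\Wu)$ lands in the two-sided ideal $J(\g)$ of \eqref{sstar}. Thus it suffices to compute that image inside $Zhu(\Wu)\cong\C[L]\otimes U(\g^\natural)$ (cf. \eqref{star}) and to put it in the asserted factored form.

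The computation follows the two terms on the right of \eqref{33}. For the first term I would use the relation \eqref{4} (a consequence of \eqref{69a}) to replace the product of creation modes $(G^{\{x_{\theta_i-\gamma_1}\}})_{(-1)}(G^{\{x_{\theta_i-\gamma_2}\}})_{(-1)}\vac$ by $-(G^{\{x_{\theta_i-\gamma_1}\}})_{(0)}(G^{\{x_{\theta_i-\gamma_2}\}})$, which \eqref{11} expands into a normally ordered quadratic expression in the $J^{\{a\}}$'s plus a total-derivative term that reduces cleanly in $Zhu$. For the second term I would apply \eqref{22}, writing $(G^{\{x_{-\gamma_1}\}})_{(0)}(G^{\{x_{\theta_i-\gamma_2}\}})$ as a combination of $L$ and quadratic $J$-terms. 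Converting the normally ordered products into $*$-products through \eqref{Z} and collecting powers of $[J^{\{x_{\theta_i}\}}]$ yields the explicit identity \eqref{prima}.

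The last step is the factorization, and this is where \eqref{55} is decisive. In \eqref{prima} every summand already carries $[J^{\{x_{\theta_i}\}}]^{M_i(k)}$ on the right, with the single exception of the sum $\sum_s[J^{\{[u_s,x_{\theta_i-\gamma_2}]^\natural\}}]*[J^{\{[x_{\theta_i-\gamma_1},u^s]^\natural\}}]$, which appears with the lower power $M_i(k)-1$. By \eqref{55}, forced by the root identity \eqref{rcfat}, this sum equals a left factor times one more copy of $[J^{\{x_{\theta_i}\}}]$, supplying exactly the missing power. After the substitution every term is right-multiplied by $[J^{\{x_{\theta_i}\}}]^{M_i(k)}$; collecting the left factors produces $\Omega_i$ as in \eqref{omega}, giving $\Omega_i*[J^{\{x_{\theta_i}\}}]^{M_i(k)}\in J(\g)$. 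I expect the main obstacle to be the noncommutative Zhu-algebra bookkeeping: tracking the order of $*$-products via \eqref{Z}, correctly disposing of the $\partial J$ derivative terms coming from \eqref{11} and \eqref{22}, and verifying \eqref{55} itself, which rests on the case-by-case root data of Table 3 through \eqref{rcfat} and on $c_2\ne0$ from \eqref{c2}.
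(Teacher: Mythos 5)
Your proposal is correct and follows essentially the same route as the paper: start from $v_i\in I^k$ (Theorem \ref{C}), apply $(G^{\{x_{-\gamma_1}\}})_{(0)}(G^{\{x_{-\gamma_2}\}})_{(0)}$ to stay in $I^k$, pass to the Zhu algebra via \eqref{4}, \eqref{11}, \eqref{22} and \eqref{Z} to obtain \eqref{prima}, and then use \eqref{55} to extract the extra factor of $[J^{\{x_{\theta_i}\}}]$ and assemble $\Omega_i$. The points you flag as delicate (the $*$-product bookkeeping, the $\partial J$ terms, and the case-by-case verification of \eqref{rcfat} behind \eqref{55}) are exactly the ones the paper handles.
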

\vskip20pt
\begin{proof}[Proof of Theorem \ref{t2}]
We have to  prove that if $L^W(\nu,\ell_0)$ is a irreducible highest weight  $W_k^{\min}(\g)$-module, then the pair $(\nu,\ell_0)$ is among those listed in the statement. Note that there is a non-zero vertex algebra homomorphism
$\Theta:V^{M_i(k)}(\g^\natural_i)\to W_k^{\min}(\g)$. Since $W_k^{\min}(\g)$ is unitary, we have that $\Theta(V^{M_i(k)}(\g^\natural_i))=V_{M_i(k)}(\g^\natural)$. In particular,  $L^W(\nu,\ell_0)$ is integrable as a $V_{M_i(k)}(\g^\natural_i)$-module, hence $\nu\in P^+_k$. If $\nu$ is not extremal, we are done. Assume therefore that $\nu$ is extremal.
The action of the Zhu algebra on the top component $V(\nu,\ell_0)$ of  $L^W(\nu,\ell_0)$ is given by the action of  the elements $[J^{\{a\}}]$ on  the irreducible finite-dimensional 
$\g^\natural$-module $V(\nu, \ell_0)$ of highest weight $\nu$, while $[L]$ acts as the multiplication by $\ell_0$. Consider the $sl(2)$-triple $\{x_{\theta_i},h_{\theta_i},x_{-\theta_i}\}$.
Since $\nu$ is extremal, $\nu(\theta_i^\vee)=M_i(k)$ for some $i\in S$. By $sl(2)$-theory applied to $U(sl(2))v$  where $v\in V(\nu,\ell_0)$ is  a  highest weight vector, 
we see  that $v$ is a non-zero multiple of $x_{\theta_i}^{M_i(k)}v'$, with $v'=x_{-\theta_i}^{M_i(k)}v$.  It follows that 
\begin{equation}\label{ef}0=\Omega_i [J^{\{x_{\theta_i}\}}]^{M_i(k)}v'=\Gamma_i(\ell_0) [J^{\{x_{\theta_i}\}}]^{M_i(k)}v',\end{equation}where, by \eqref{omega}, $\Gamma_i(\ell_0)=-c_2(k+h^\vee)\ell_0+b$ and  $b\in\C$ is independent of $\ell_0$, and
$c_2\ne 0$ by \eqref{c2}.  Since, by construction, 
$[J^{\{x_{\theta_i}\}}]^{M_i(k)}v'\ne 0$, we find from \eqref{ef} that  $\Gamma_i(\ell_0)=0$. Since $L^W(\nu,A(k,\nu))$ is a representation of $\Ws$, then  $\Gamma_i(A(k,\nu))=0$.  But  $\Gamma_i$ is a linear relation, so
$\ell_0=A(k,\nu)$ is the unique solution of the equation $\Gamma_i(\ell_0)=0$.
\end{proof}


A further refinement of Theorem \ref{t2} is the following result.
\begin{theorem}\label{ipehw} Let $k$ be in the unitary range.
The irreducible highest weight $\Ws$-modules are all the  irreducible positive energy representations.
\end{theorem}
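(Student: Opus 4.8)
The plan is to run everything through the Zhu algebra. By the correspondence between irreducible positive energy modules and irreducible $Zhu(\Ws)$-modules recalled above \cite{FZ,KWa}, an irreducible positive energy $\Ws$-module $M$ is determined by its top component $M_{top}$, which is an irreducible module over $Zhu(\Ws)\simeq(\C[L]\otimes U(\gn))/J(\g)$ (see \eqref{sstar}). Since every highest weight $\Ws$-module is automatically of positive energy, it suffices to prove the reverse inclusion: that $M_{top}$ is an irreducible \emph{finite-dimensional} $\gn$-module $V(\nu)$ on which $L$ acts by a scalar. For then the highest weight vector of $V(\nu)$ generates $M$ as a highest weight module, and the resulting pair $(\nu,\ell_0)$ must occur in the list of Theorem \ref{t2}.

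First I would record the two structural features of the action on $M_{top}$. Because $L$ is central in $\C[L]\otimes U(\gn)$, and $M_{top}$ is cyclic over the countably-dimensional algebra $Zhu(\Ws)$ hence of countable dimension, Dixmier's form of Schur's lemma forces $L$ to act by a scalar $\ell_0$; thus $M_{top}$ is really an irreducible module over the quotient of $U(\gn)$ by the image of $J(\g)$. Next, by \eqref{vinik} the vectors $v_i=(J^{\{x_{\theta_i}\}}_{(-1)})^{M_i(k)+1}\vac$ lie in the maximal ideal $I^k$; applying the multiplication rule \eqref{Z} repeatedly shows that the class of $v_i$ in $Zhu(\Wu)$ equals $[J^{\{x_{\theta_i}\}}]^{M_i(k)+1}$, which therefore belongs to $J(\g)$. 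Hence $x_{\theta_i}^{M_i(k)+1}$ annihilates $M_{top}$ for every $i\in S$.

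The heart of the argument is to upgrade these bare nilpotency relations to integrability of the whole $\gn$-action. Restricting to a simple summand $\gn_i$, the module $M_{top}$ factors through $U(\gn_i)/\langle x_{\theta_i}^{M_i(k)+1}\rangle$, which is precisely the Zhu algebra of the simple affine vertex algebra $V_{M_i(k)}(\gn_i)$, since for a simple Lie algebra the maximal ideal of the universal affine vertex algebra is generated by the single singular vector $e_{\theta_i}^{M_i(k)+1}\vac$. In the unitary range all $M_i(k)$ are non-negative integers (Theorem \ref{nec}), so each $V_{M_i(k)}(\gn_i)$ is integrable and hence rational, whence its Zhu algebra is finite-dimensional and semisimple. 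Therefore $M_{top}$ is an irreducible module over the finite-dimensional semisimple algebra $\bigotimes_{i\in S} Zhu(V_{M_i(k)}(\gn_i))$, so it is finite-dimensional, i.e. $M_{top}\cong V(\nu)$ for a dominant integral $\nu$ with $\nu(\theta_i^\vee)\le M_i(k)$ for all $i$, that is $\nu\in P^+_k$.

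With $M_{top}$ finite-dimensional the conclusion is immediate: its $\gn$-highest weight vector is killed by all positive modes and is an $L_0$-eigenvector, so it generates $M$ as a highest weight $\Ws$-module, and $(\nu,\ell_0)$ then appears in Theorem \ref{t2}. I expect the one genuinely delicate point to be this finite-dimensionality step, namely the passage from the single relation $x_{\theta_i}^{M_i(k)+1}=0$ to local finiteness of the entire $\gn$-action; the clean way to secure it is exactly the identification of $U(\gn_i)/\langle x_{\theta_i}^{M_i(k)+1}\rangle$ with the semisimple Zhu algebra of the rational vertex algebra $V_{M_i(k)}(\gn_i)$, which is where the integrality of the levels $M_i(k)$ in the unitary range is indispensable.
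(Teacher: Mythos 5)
Your proof is correct and follows essentially the same route as the paper: both arguments reduce the claim to the finite-dimensionality and semisimplicity of the Zhu algebras of the rational affine vertex algebras $V_{M_i(k)}(\g^\natural_i)$, which forces $M_{top}$ to be a finite-dimensional irreducible $\g^\natural$-module (with $[L]$ central acting by a scalar) whose highest weight vector then generates $M$ as a highest weight module, so that Theorem \ref{t2} applies. The only cosmetic difference is that the paper restricts $M$ along the embedding $\bigotimes_{i\in S}V_{M_i(k)}(\g^\natural_i)\hookrightarrow \Ws$, whereas you work directly with the relations $[J^{\{x_{\theta_i}\}}]^{M_i(k)+1}\in J(\g)$ coming from \eqref{vinik} together with the Frenkel--Zhu presentation $Zhu(V_{M_i(k)}(\g^\natural_i))\simeq U(\g^\natural_i)/\langle x_{\theta_i}^{M_i(k)+1}\rangle$.
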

\begin{proof}
Let $\mathcal V_k(\g^{\natural}) = \bigotimes_{i\in S} V_{M_i(k)}(\g^{\natural}_i)$.    Since each  $V_{M_i(k)}(\g^{\natural}_i)$ is a rational vertex algebra, then $ Zhu(\mathcal V_k(\g^{\natural})) = \bigotimes_i  Zhu(V_{M_i(k)}(\g^{\natural}_i) )$ is a semi-simple finite-dimensional associative algebra.

Let  $M$ be  an irreducible  positive energy  $W_k^{\min}(\g)$--module. Then the top component $M_{top}$ is an irreducible module for Zhu's algebra $Zhu(W_k^{\min}(\g))$.  By using the  embedding  $\mathcal V_k(\g^{\natural})  \rightarrow \Ws$ we see that $M_{top}$ is also a $Zhu(\mathcal V_k(\g^{\natural}) )$--module. 
    Since   $[L]$
 is a central  element in $ Zhu(W_k^{\min}(\g))$, it acts on $M_{top}$ as multiplication by a scalar, hence, by  (\ref{sstar}), $M_{top}$ remains irreducible
 when restricted to $\g^\natural$  and  therefore  it is an irreducible $Zhu(\mathcal V_k(\g^{\natural}) )$--module.  Since $Zhu(\mathcal V_k(\g^{\natural}) )$ is a semi-simple finite-dimensional associative algebra, we conclude that  $M_{top}$ is  finite-dimensional.
This implies that $M_{top}$ contains a  highest weight $\mathcal V_k(\g^{\natural}) $--vector $w$, which is then  a highest weight  vector for the action of $W_k^{\min}(\g)$. Therefore
$W_k^{\min}(\g)w$ is a highest weight submodule of $M$. Irreducibility of $M$ implies that $M =W_k^{\min}(\g)w$ is a highest weight $W_k^{\min}(\g)$--module. 
\end{proof}
 
\section{On positive energy modules over $V_k(\g)$ and $\Ws$}
By Theorem \ref{ipehw}, the irreducible highest weight $\Ws$-modules are all the  irreducible positive energy representations. In this section we show, by contrast, that $V_k(\g)$ admits positive energy modules which are not highest weight,  and we show that $\Ws$ admits non-positive energy modules.

 The $V_k(\g)$--modules which we construct belong to class of modules called  {\it relaxed} highest weight modules. These modules appear in \cite{AdM95, FST, LMRS, R1}, in the context of the representation theory of $V_k(\mathfrak{sl}_2)$;  later they have been systematically studied in \cite{KaR1, KaR2, KaRW} for higher rank cases. We will show the existence of such modules for $V_k(\g)$ for some values of  $k$ belonging to  the unitary range. We explore a free field realization which enables us to construct relaxed highest weight modules from relaxed highest weight modules over the Weyl vertex algebra (also called $\beta \gamma$ ghost vertex algebra). These relaxed modules were  previously studied in \cite{RW, AW, AdP-2019}. 

On the contrary, the vertex algebra $\Ws$ does not have non-highest weight positive energy modules. The non-highest weight  relaxed highest weight  $V_k(\g)$--modules are  either  mapped to zero by  quantum Hamiltonian reduction, so they don't contribute to $\Ws$--modules; or they are mapped to highest weight $\Ws$--modules. This connection deserves a more detailed analysis in the future.

We shall see below that $\Ws$ contains non-positive energy modules having all infinite dimensional weight spaces.

\subsection{Positive energy modules over $V_k(\g)$ which are not highest weight}
 We follow the notation and results of \cite[Section 6]{AMPP-20}.
   Consider the superspace $\C^{m|2n}$ equipped with the standard supersym\-metric form $\langle\cdot,\cdot\rangle_{m|2n}$ given in \cite{Kacsuper}. Let $V=\Pi \C^{m|2n}$, where $\Pi$ is the parity reversing functor. Let $M_{(m|2n)}$ be the universal vertex algebra generated by $V$ with $\l$--bracket
\begin{equation}\label{lambdaproduct}
[v_\lambda w]=\langle w,v\rangle.
\end{equation}
Let $\{e_i\}$ be the standard basis of $V$ and let $\{e^i\}$ be its dual basis with respect to $\langle \cdot,\cdot\rangle$ (i. e. $\langle e_i,e^j\rangle=\d_{ij}$). In this basis the $\lambda$-brackets are given by 
$$
[{e_h}_\lambda e_{m-k+1}]=\delta_{hk},\quad
[{e_{m+i}}_\lambda e_{m+2n-j+1}]=-\delta_{ij},\quad
[{e_{m+n+i}}_\lambda e_{m+n-j+1}]=\delta_{ij},
$$
for $h,k=1,\ldots m,\,\,i,j= 1\ldots,n$.

The vertex algebra $M_{(m \vert n)} $ is called  the Weyl-Clifford vertex algebra  and,  in the physics terminology,  the
$\beta\gamma b c$ system (cf. \cite{ciao2}).  In the case $m= 0$ (resp. $n=0$), we have the Weyl  vertex algebra  $M_{(n)} := M_{(0, 2n)}$, (resp. the Clifford vertex algebra $F_{(m)}:= M_{(m, 0)}$).   
 Clearly, we have the isomorphism:
$$  M_{(m|2n)} \cong F_{(m)} \otimes M_{(n)}. $$

It was proved in \cite{AdP-2019, AW, RW} that the Weyl vertex algebra $M_{(n)}$ has a remarkable family of  irreducible positive energy modules, which we denote by  $\widetilde{U_n}({\bf  a})$,  whose top components
 are isomorphic to   $$ U_n({\bf a}) = x_1 ^{a_1} \cdots x_n ^{a_n} {\C}[ x_1, \cdots, x_n, x_1 ^{-1}, \cdots, x_{n} ^{-1}], \quad  \mbox{for} \  {\bf a} =(a_1, \cdots, a_n) \in ({\C} \setminus {\Z})^n.  $$

It was proved in  \cite[Section 6]{AMPP-20} that  the simple  affine vertex algebra
$ V_{-1/2} (spo(2r \vert s))$ is realized as  a fixed point subalgebra  $M_{(s \vert 2 r)}^+$ of $M_{(s \vert 2 r)}$. In particular, for $\g=spo(2|3)$, we have $$V_{-1/2} (\g) = M_{(3 \vert 2 )}^+.$$ 
From  Proposition \ref{singprime2} \st{below} it follows that  $$V_{-m/4} (\g) = \frac{ V^{-m/4} (\g) }{ V^{-m/4} (\g)((x_{\theta_1})_{(-1)})^{m+1}\vac}. $$
This easily  implies that
$$ V_{-m/4 }(\g) \hookrightarrow  V_{-1/2} (\g)  ^{\otimes \tfrac{m}{2} } \hookrightarrow  M_{(3 \vert 2 )}^{\otimes \tfrac{m}{2}} \cong   M_{( \tfrac{3m}{2}  \vert m)}. $$
In this way, we get a non-zero vertex algebra homomorphism
$\Phi^{(m)} :  V_{-m/4 }(\g) \rightarrow   M_{( \frac{3m}{2}  \vert m)}$  for each $m\ge 4$ even.
Now using positive energy  modules  $\widetilde{U_m}({\bf  a})$ for $M_{(m)}$, we construct $M_{( \frac{3m}{2}  \vert m)}$--modules
$  F_{( \frac{3m}{2})  }  \otimes \widetilde{U_m}({\bf a})$, 
which by restriction become $V_{-m/4 }(\g)$--modules. Since their top components are not highest weight $\g$--modules,  we have constructed a family of non highest weight, positive energy  $V_{-m/4 }(\g)$--modules.
 Interestingly, these modules  are still  integrable for $\widehat \g^\natural$. We shall present details and irreducibility analysis in our forthcoming publications.
 %
 

 \subsection{Non-positive energy  modules over $W_k^{min}(\g)$}
 For a conformal vertex algebra $V$ let $\mathcal E(V)$ be the category of all (weak) $V$--modules,
 on which $L_0$ is diagonalizable, 
 and let  $\mathcal E^+(V)$ be the category of  positive energy modules in $\mathcal E(V)$. 
 Set $\mathcal E_k = \mathcal E (W_k^{min}(\g)) $, $\mathcal E_k ^+ = \mathcal E ^+(W_k^{min}(\g)) $, where $k$ is from the unitary range for $\g$.

 A  $W_k^{min}(\g)$--module  $M$ is called a  weight $W_k^{min}(\g)$--module if it is a weight  module for $\widehat{\g}^{\natural}$. 
  Let $\mathcal E_k^{fin} $ be the subcategory of  $\mathcal E_k  $   consisting of weight modules with finite multiplicities.

Theorem \ref{ipehw} shows that each irreducible module in $\mathcal E_k ^+$ belongs to the category $\mathcal{E}_k ^{fin}$.
One very interesting question is to see if there are   weight representations of  $W^{min}_k(\g)$ outside of the category $\mathcal E_k ^+$.   We believe that there are no such modules in $\mathcal{E}_k ^{fin}$:
\begin{conj} The irreducible highest weight $\Ws$-modules are all the  irreducible   representations in the category $\mathcal{E}_k ^{fin}$.
\end{conj}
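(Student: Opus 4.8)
The plan is to reduce the conjecture to Theorem \ref{ipehw} by proving that every irreducible module $M$ in $\mathcal{E}_k^{fin}$ is of positive energy. First I would restrict $M$ to the affine subalgebra $\mathcal V_k(\g^\natural)=\bigotimes_{i\in S}V_{M_i(k)}(\g^\natural_i)$. Since $M_i(k)\in\ZZ_{\ge0}$ throughout the unitary range, each factor is a rational, $C_2$-cofinite vertex algebra whose irreducible modules are exactly the integrable highest weight ones, all of positive energy. If one knows that $M$ restricts to an \emph{ordinary} $\mathcal V_k(\g^\natural)$-module — equivalently, that the Sugawara energy $L_0^{\mathrm{Sug}}$ is bounded below — then complete reducibility furnishes a decomposition
\[
M\cong\bigoplus_{\Lambda}L(\Lambda)\otimes W_\Lambda
\]
as a module over $\mathcal V_k(\g^\natural)\otimes C$, where $C=\mathrm{Com}(\mathcal V_k(\g^\natural),\Ws)$ is the coset, the sum is over the finitely many dominant integral weights $\Lambda$ of levels $M_i(k)$, and $W_\Lambda$ is a $C$-module graded by $D:=L_0-L_0^{\mathrm{Sug}}$.

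The second, and easier, step extracts positive energy from the finite-multiplicity hypothesis. Writing $L_0=L_0^{\mathrm{Sug}}\otimes 1+1\otimes D$, and using that a fixed $\h^\natural$-weight occurs in each $L(\Lambda)$ at arbitrarily large values of $L_0^{\mathrm{Sug}}$, finiteness of the joint $(\h^\natural,L_0)$-weight spaces of $M$ forces $D$ to be bounded below on every $W_\Lambda$: otherwise a single weight space would collect the nonzero contributions $L(\Lambda)_{\lambda,e}\otimes(W_\Lambda)_{\Delta-e}$ from infinitely many energies $e$, and so be infinite-dimensional. As $L_0^{\mathrm{Sug}}$ is bounded below on the integrable summands and $D$ is now bounded below on the coset factors, $L_0$ is bounded below, $M$ lies in $\mathcal{E}_k^{+}$, and Theorem \ref{ipehw} identifies it as a highest weight module.

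The main obstacle is the very first step: establishing that $L_0^{\mathrm{Sug}}$ is bounded below when only the ambient grading $L_0$ (through the affine degree $d=-L_0$) is assumed diagonalizable with finite multiplicities. This is precisely where the representation theory of $\Ws$ should diverge from that of $V_k(\g)$. The relaxed weight modules produced for $V_k(\g)$ in the final section, via the free-field realization into $M_{(\frac{3m}{2}\vert m)}$, have Sugawara energy unbounded below, and the content of the conjecture is that the $W$-algebra structure forbids their counterparts. A natural line of attack is to exploit the anticommutators $\{G^{\{v\}}_r,G^{\{v^*\}}_{-r}\}$, which by the $\lambda$-bracket formulas of \cite{AKMPP,KW1} equal $2L_0$ plus a quadratic Casimir of $\g^\natural$ plus lower-order and central terms; on an integrable module the Casimir contribution is controlled, so in the unitary case the manifest positivity of $\{G^{\{v\}}_r,G^{\{v^*\}}_{-r}\}$ bounds $L_0$ below. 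For a general module in $\mathcal{E}_k^{fin}$ this positivity is unavailable, and one must instead argue structurally — presumably by showing that an unbounded $L_0^{\mathrm{Sug}}$ is incompatible either with the vertex-algebra truncation axiom for $\mathcal V_k(\g^\natural)$ or with the finite-multiplicity condition, since rational affine vertex algebras at non-negative integer level admit no relaxed modules. Closing this gap, that is, ruling out relaxed-type $\Ws$-modules with finite $\widehat\g^\natural$-multiplicities, is the crux.
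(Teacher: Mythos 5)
The first thing to say is that this statement is presented in the paper as an open \emph{conjecture}: the authors write only ``we believe that there are no such modules in $\mathcal{E}_k^{fin}$'' and supply no proof, so there is nothing in the paper to compare your argument against. Your proposal must therefore be judged on its own, and by your own account it is incomplete: you explicitly leave open the ``crux,'' namely the very first step, which is to show that the restriction of an irreducible $M\in\mathcal{E}_k^{fin}$ to $\mathcal V_k(\g^\natural)=\bigotimes_{i\in S}V_{M_i(k)}(\g^\natural_i)$ decomposes into integrable highest weight modules (equivalently, that $L_0^{\mathrm{Sug}}$ is bounded below). Without that, the isotypic decomposition $M\cong\bigoplus_\Lambda L(\Lambda)\otimes W_\Lambda$ on which your second step rests is not available, and the second step is the only place where the finite-multiplicity hypothesis is actually used. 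So as written this is a strategy, not a proof.

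That said, the gap you identify is narrower than you suggest. Since $k$ lies in the unitary range and is non-collapsing, each $M_i(k)$ is a positive integer, and the simple affine vertex algebra $V_{M_i(k)}(\g^\natural_i)$ at positive integer level is known to be \emph{regular} (Dong--Li--Mason): every weak module is a direct sum of irreducible integrable highest weight modules. Any $\Ws$-module restricts to a weak $\mathcal V_k(\g^\natural)$-module, so the decomposition you want holds with no a priori lower bound on $L_0^{\mathrm{Sug}}$; boundedness below of $L_0^{\mathrm{Sug}}$ is then automatic from integrability, and your finite-multiplicity argument (which does appear sound, once one notes that the $L_0$-spectrum of an irreducible module lies in finitely many $\ZZ$-cosets, so infinitely many of the putative $D$-eigenvalues unbounded below would land in a single coset and pile up in one joint $(\h^\natural,L_0)$-weight space of $M$) would force $D=L_0-L_0^{\mathrm{Sug}}$ to be bounded below on each of the finitely many multiplicity spaces $W_\Lambda$, hence $M\in\mathcal E_k^+$, and Theorem \ref{ipehw} finishes. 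If you can carefully verify the applicability of regularity here and the compatibility of $L_0$ with the isotypic decomposition, you would have a proof of a statement the authors only conjecture --- which is exactly why you should treat the remaining details with suspicion rather than optimism, and in any case you cannot present the current text, which flags its own key step as unresolved, as a proof.
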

However, weight modules   which  are not in  $\mathcal E_k ^+$ do exist. In order to see this, we use 
Kac-Wakimoto free field realization of $W_{min}^k(\g)$ \cite[Theorem 5.2]{KW1}, which gives a vertex algebra homomorphism
$$\Psi:W^k_{\min}(\g)\to \mathcal H \otimes \mathbb V^k(\g^\natural)\otimes F(\g_{1/2}),$$
where
\begin{itemize}
\item $\mathcal H$ is the Heisenberg vertex algebra of rank 1 generated by a field $a$ such that $[a_{\lambda} a] = \lambda$;
\item $\mathbb V^k(\g^{\natural}) = \bigotimes_{i\in S} V^{M_i(k)+\chi_i}(\g^{\natural}_i)$;
\item $F(\g_{1/2})$ is a fermionic vertex algebra.
\end{itemize}

 \begin{lemma}  \label{exlem-f-1} There is a embedding of  vertex algebras $\Phi: \bigotimes_{i\in S}V_{-\chi_i} (\g_i^{\natural}) \rightarrow   F(\g_{1/2})$ uniquely determined by
\begin{equation}\label{b} b \mapsto  \tfrac{1}{2} \sum_{ \alpha \in A} : \Phi ^{\alpha} \Phi_{[u_{\alpha}, b]}:,\quad b\in \g^\natural.
\end{equation}
\end{lemma}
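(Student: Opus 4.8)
The plan is to realize the right-hand side of \eqref{b} as a family of affine currents inside $F(\g_{1/2})$, to compute their $\lambda$-bracket, to pin down the level, and finally to descend to the simple quotient. Fix a basis $\{u_\alpha\}_{\alpha\in A}$ of $\g_{1/2}$ and its dual basis $\{u^\alpha\}$ with respect to the form $\langle u,v\rangle=(x_{-\theta}|[u,v])$ of \eqref{pizz}, and set $\Phi^\alpha:=\Phi_{u^\alpha}$, so that the generating fields of $F(\g_{1/2})$ satisfy $[{\Phi_u}_{\lambda}\Phi_v]=\langle u,v\rangle$. The starting observation is that, since every $b\in\g^\natural$ centralizes $\mathfrak s$ and in particular commutes with $x_{-\theta}$, the form $\langle\cdot,\cdot\rangle$ is $\g^\natural$-invariant:
\begin{equation*}
\langle [b,u],v\rangle+\langle u,[b,v]\rangle=0,\qquad b\in\g^\natural,\ u,v\in\g_{1/2}.
\end{equation*}
This invariance is exactly what forces the currents $J^{\{b\}}:=\tfrac12\sum_{\alpha\in A}:\Phi^\alpha\Phi_{[u_\alpha,b]}:$, i.e. the images of $b$ under \eqref{b}, to close under the $\lambda$-bracket back into $\g^\natural$.

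The main step is a direct application of the non-commutative Wick formula to $[{J^{\{a\}}}_{\lambda}J^{\{b\}}]$. The only non-trivial contractions are the constants $[{\Phi^\alpha}_{\lambda}\Phi_{[u_\beta,b]}]=\langle u^\alpha,[u_\beta,b]\rangle$; reorganizing the single and double contractions by means of the invariance above and the completeness relation $\sum_\alpha\langle w,u^\alpha\rangle u_\alpha=w$, a computation (tracking the fermionic signs) yields
\begin{equation*}
[{J^{\{a\}}}_{\lambda}J^{\{b\}}]=J^{\{[a,b]\}}+\lambda\,\kappa(a,b),
\end{equation*}
where $\kappa$ is the symmetric invariant form on $\g^\natural$ produced by the double contractions, a fixed multiple of the supertrace of the action of $a,b$ on $\g_{1/2}$. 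Hence \eqref{b} extends uniquely to a vertex algebra homomorphism out of the universal affine vertex algebra of $\g^\natural$ at the level determined by $\kappa$; since $\kappa$ splits along $\g^\natural=\bigoplus_{i\in S}\g_i^\natural$, we obtain a homomorphism $\bigotimes_{i\in S}V^{\ell_i}(\g_i^\natural)\to F(\g_{1/2})$ for suitable levels $\ell_i$.

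It remains to identify $\ell_i=-\chi_i$ and to pass to the simple quotient. The cleanest way to fix the level is to evaluate the $\lambda$-linear term $\kappa(x_{\theta_i},x_{-\theta_i})$ on root vectors for $\pm\theta_i$ and compare with the normalization $(\theta_i|\theta_i)$; the resulting index is the Dynkin index of $\g_{1/2}$ as a $\g_i^\natural$-module, which by $\chi_i=-\xi(\theta_i^\vee)$ (see \eqref{chii}, $\xi$ the highest weight of $\g_{1/2}$) equals $-\chi_i$. Alternatively, one reads this off the Kac--Wakimoto realization \cite[Theorem 5.2]{KW1}: there the level-$M_i(k)$ currents $J^{\{a\}}$ of $W^k_{\min}(\g)$ are the sum of a current in $\mathbb{V}^k(\g^\natural)=\bigotimes_{i\in S}V^{M_i(k)+\chi_i}(\g_i^\natural)$ and exactly the quadratic fermionic expression \eqref{b}, forcing the latter to carry the complementary level $-\chi_i$.

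Finally, since $-\chi_i\in\{1,2\}$ is a positive integer, the image is an integrable $\widehat{\g_i^\natural}$-module: by the standard theory of free-fermion (spin) realizations of affine Lie algebras at positive integer level, $F(\g_{1/2})$ decomposes over $\widehat{\g_i^\natural}$ as a sum of integrable highest weight modules. Consequently the singular vector $(J^{\{x_{\theta_i}\}}_{(-1)})^{-\chi_i+1}\vac$ generating the maximal ideal of $V^{-\chi_i}(\g_i^\natural)$ (cf.\ \eqref{vvi}) maps to $0$, so the homomorphism descends to $\bigotimes_{i\in S}V_{-\chi_i}(\g_i^\natural)$; being non-zero out of a simple vertex algebra, it is injective, which is the asserted embedding $\Phi$. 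I expect the main obstacle to be the Wick computation of the second paragraph, and in particular the careful bookkeeping of the fermionic signs and of the normalization constant, which is what certifies that the level is $-\chi_i$ rather than another multiple.
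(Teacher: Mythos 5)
Your proposal is correct, but it reaches the conclusion by a somewhat different route than the paper. You verify directly (via the non-commutative Wick formula and the $\g^\natural$-invariance of $\langle u,v\rangle=(x_{-\theta}|[u,v])$) that the quadratic fermionic currents close into an affine $\g^\natural$-algebra, and you pin the level down to $-\chi_i$ by a Dynkin-index computation; the paper instead avoids any OPE computation by specializing the Kac--Wakimoto free field realization $\Psi'\colon W^k_{\min}(\g)\to V^1(\C a)\otimes V^{\alpha_k}(\g^\natural)\otimes F(\g_{1/2})$ at the level where $M_i(k)+\chi_i=0$: there the simple quotient $V_{\alpha_k}(\g^\natural)$ collapses to $\C$, so only the fermionic summand of $\Psi'(J^{\{b\}})$ survives and one obtains for free a vertex algebra homomorphism $V^{-\chi_i}(\g_i^\natural)\to F(\g_{1/2})$ given exactly by \eqref{b}. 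Your second, ``alternative'' identification of the level is in fact the paper's mechanism. For the descent to the simple quotient the two arguments rest on the same fact --- unitarity, hence complete reducibility, of $F(\g_{1/2})$ --- which you phrase as integrability at positive integer level killing the singular vector $(J^{\{x_{\theta_i}\}}_{(-1)})^{-\chi_i+1}\vac$, and the paper phrases as simplicity of the image. The only caveat is that your primary route leaves the Wick computation (signs, normalization, and the value of $\kappa$) as an acknowledged but unexecuted step; it is a standard computation and your invariance identity is the right input, but as written the level $-\chi_i$ is certified only by your fallback to the Kac--Wakimoto realization, i.e.\ by the paper's own device. What your approach buys is a self-contained construction independent of the $W$-algebra; what the paper's buys is economy, since the realization of \cite[Theorem 5.1]{KW1} already encodes both the closure of the bracket and the level.
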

\begin{proof} We start with Kac-Wakimoto free field realization \cite[Theorem 5.1]{KW1} $$\Psi':W^k_{\min}(\g)\to \mathcal V=V^{1}(\C a)\otimes V^{\alpha_k}(\g^\natural)\otimes F(\g_{1/2}),$$  where
$$\a_k(a, b)=\d_{i,j} (M_i(k)+\chi_i)(a|b)_i^\natural \text{ for $a\in \g^\natural_i, b\in \g^\natural_j,\ i,j\ge 0$.}
$$
Choose $\mu\in \C$ such that the $\mathcal V$-invariant Hermitian form on $M(\mu)\otimes V^{\alpha_k}(\g^\natural)\otimes F(\g_{1/2})$ is invariant for the action of $W^k_{\min}(\g)$. Since $ \Psi'(J^{\{b\}})\in V^{\alpha_k}(\g^\natural)\otimes F(\g_{1/2})$, it follows that  the map $J^{\{b\}}\mapsto \Psi'(J^{\{b\}})_{(-1)}(v_\mu\otimes \vac\otimes \vac),\,b\in\g_i^\natural$, extends to a map 
\begin{equation}\label{a}\bigotimes_{i\in S}V^{M_i(k)}(\g^\natural_i)\to  \C v_\mu\otimes  V_{\alpha_k}(\g^\natural)\otimes F(\g_{1/2})\end{equation}
that, identifying $\C v_\mu\otimes V_{\alpha_k}(\g^\natural)\otimes F(\g_{1/2})$ with  $V_{\alpha_k}(\g^\natural)\otimes F(\g_{1/2})$ in the obvious way, is a vertex algebra homomorphism.
Now plug $\a_i(k)=0$, i.e. $M_i(k)=-\chi_i$, into \eqref{a}. We get the homomorphism  $\Phi':\bigotimes_iV^{-\chi_i} (\g_i^{\natural}) \rightarrow   F(\g_{1/2})$ explicitly given by \eqref{b}. Since $ F(\g_{1/2})$  is unitary, it is completely reducible, hence the image of $\Phi'$ is simple.
Therefore $\Phi'$ descends to $\Phi$.

\end{proof}

The map $\Psi'$ induces a map $\Psi: W^k_{\min}(\g)\to V^{1}(\C a)\otimes V_{\alpha_k}(\g^\natural)\otimes F(\g_{1/2})$.  We  first prove that  the image of $\Psi$ is simple.

\begin{proposition}\label{p3} Assume that  $ M_i(k) + \chi_i  \in {\Z}_{\ge 0}$ for all $i$. Then $\Psi(W^k_{\min}(\g))=W_k^{\min}(\g)$. 
\end{proposition}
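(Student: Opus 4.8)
The plan is to identify $\ker\Psi$ with the maximal ideal $I^k$, so that $\Psi(\Wu)\cong\Wu/I^k=\Ws$. Since $\Psi$ sends $\vac$ to $\vac$ it is nonzero, and since $\Wu$ has $I^k$ as its \emph{unique} maximal ideal, the ideal $\ker\Psi$ is automatically a proper ideal and hence contained in $I^k$; thus $\Psi(\Wu)$ already surjects onto $\Ws$, and it suffices to prove the reverse inclusion $I^k\subseteq\ker\Psi$. By Theorem \ref{C} the ideal $I^k$ is generated by the singular vectors $v_i=(J^{\{x_{\theta_i}\}}_{(-1)})^{M_i(k)+1}\vac$, $i\in S$ (together with $\widetilde v_1$ when $\g=spo(2|3)$), so the whole statement reduces to checking that $\Psi(v_i)=0$.

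The key structural input is that $\Psi$ realizes the affine subalgebra generated by $\g^\natural$ as a \emph{diagonal} current. Indeed, by the Kac--Wakimoto realization \cite[Theorem 5.2]{KW1} together with Lemma \ref{exlem-f-1}, for $b\in\g^\natural_i$ one has $\Psi(J^{\{b\}})=b^{(1)}+b^{(2)}$, where $b^{(1)}$ is the corresponding current in $V_{\alpha_k}(\g^\natural)$, acting on the factor $V_{M_i(k)+\chi_i}(\g^\natural_i)$ at level $M_i(k)+\chi_i$, and $b^{(2)}=\tfrac12\sum_{\alpha}:\Phi^\alpha\Phi_{[u_\alpha,b]}:$ is the fermionic current in $F(\g_{1/2})$, which by Lemma \ref{exlem-f-1} realizes $\g^\natural_i$ at level $-\chi_i$; the Heisenberg factor is not involved. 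Both currents are integrable: the first because $M_i(k)+\chi_i\in\ZZ_{\ge 0}$ by hypothesis, whence $(x_{\theta_i}^{(1)})_{(-1)}^{M_i(k)+\chi_i+1}\vac=0$; the second because, by Lemma \ref{exlem-f-1}, the image of $\Phi$ is the \emph{simple} integrable vertex algebra $V_{-\chi_i}(\g^\natural_i)$ (here $-\chi_i\in\{1,2\}$), whence $(x_{\theta_i}^{(2)})_{(-1)}^{-\chi_i+1}\vac=0$.

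Since $\Psi$ is a vertex algebra homomorphism, $\Psi(v_i)=(\Psi(J^{\{x_{\theta_i}\}})_{(-1)})^{M_i(k)+1}\vac$, and as the two commuting even operators $(x_{\theta_i}^{(1)})_{(-1)}$ and $(x_{\theta_i}^{(2)})_{(-1)}$ act on different tensor factors, the binomial theorem gives
\begin{equation*}
\Psi(v_i)=\sum_{j=0}^{M_i(k)+1}\binom{M_i(k)+1}{j}\,(x_{\theta_i}^{(1)})_{(-1)}^{\,j}\vac\ \otimes\ (x_{\theta_i}^{(2)})_{(-1)}^{\,M_i(k)+1-j}\vac .
\end{equation*}
A summand is nonzero only if $j\le M_i(k)+\chi_i$ (from the first factor) and $M_i(k)+1-j\le -\chi_i$, i.e. $j\ge M_i(k)+\chi_i+1$ (from the second factor). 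These conditions are incompatible, so every summand vanishes and $\Psi(v_i)=0$.

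When $\g\ne spo(2|3)$ this finishes the argument, since the $v_i$ generate $I^k$. For $\g=spo(2|3)$ one has $\chi_1=-2$ and $I^k$ is generated by $\widetilde v_1$; here I would invoke the relation $(G^{\{x_{-\xi}\}})_{(1)}v_1=c_2\,\widetilde v_1$ with $c_2\ne0$ recorded in the proof of Theorem \ref{C}, so that $\Psi(\widetilde v_1)=c_2^{-1}\Psi(G^{\{x_{-\xi}\}})_{(1)}\Psi(v_1)=0$ by the homomorphism property, the vanishing $\Psi(v_1)=0$ being the special case of the computation above. The main point demanding care is the identification of $\Psi(J^{\{b\}})$ as a genuine diagonal current whose two summands live in commuting tensor factors at the asserted levels, and in particular the integrability of the fermionic copy of $\g^\natural_i$ inside $F(\g_{1/2})$ at level $-\chi_i$; this is exactly the content that Lemma \ref{exlem-f-1} supplies, and it is where the hypothesis $M_i(k)+\chi_i\in\ZZ_{\ge0}$ (forcing both levels to be nonnegative integers) is used.
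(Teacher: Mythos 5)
Your proposal is correct and follows the same skeleton as the paper's proof: reduce, via Theorem \ref{C}, to showing that $\Psi(v_i)=0$, and then exploit the fact that under the Kac--Wakimoto realization combined with Lemma \ref{exlem-f-1} the current $J^{\{b\}}$, $b\in\g^\natural_i$, becomes a diagonal current with one summand in $V_{M_i(k)+\chi_i}(\g^\natural_i)$ and one in the copy of $V_{-\chi_i}(\g^\natural_i)$ inside $F(\g_{1/2})$. Where you diverge is the last step. The paper concludes softly: since $M_i(k)+\chi_i\in\ZZ_{\ge 0}$ the target $V_{\alpha_k}(\g^\natural)\otimes F(\g_{1/2})$ is unitary, hence completely reducible, so the image of $V^{M_i(k)}(\g^\natural_i)$ under the diagonal map is simple and the generator of its maximal ideal, namely $v_i$, must die. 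You instead verify the vanishing by hand with the binomial expansion $\Psi(v_i)=\sum_j\binom{M_i(k)+1}{j}(x_{\theta_i}^{(1)})_{(-1)}^{j}\vac\otimes(x_{\theta_i}^{(2)})_{(-1)}^{M_i(k)+1-j}\vac$ and the observation that the integrability constraints $j\le M_i(k)+\chi_i$ and $j\ge M_i(k)+\chi_i+1$ are incompatible; this is more explicit and avoids invoking unitarity, at the cost of needing the two summands to genuinely live in distinct commuting tensor factors (which they do) and the simplicity of the image of $\Phi$ (which Lemma \ref{exlem-f-1} supplies). Your handling of $\g=spo(2|3)$ via the relation $(G^{\{x_{-\xi}\}})_{(1)}v_1=c_2\widetilde v_1$ is also fine; the paper shortcuts this by noting that the $v_i$ generate $I^k$ in all cases, $v_1$ merely failing to be singular for $spo(2|3)$ -- the two observations rest on the same identities from the proof of Theorem \ref{C}.
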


\begin{proof} As shown in the proof of Theorem \ref{C}, the vectors $v_i$ generate the maximal ideal in $\Wu$ (although $v_1$ is not a singular vector when $\g=spo(2|3)$). Hence 
it suffices to check that  in  $V^{1}(\C a)\otimes V_{\alpha_k}(\g^\natural)\otimes F(\g_{1/2})$ we have:
$$
(J^{\{x_{\theta_i}\}}_{(-1)})^{M_i(k)+1}\vac =0 \quad i \ge 1. $$
 From Kac-Wakimoto  free field realization and Lemma \ref{exlem-f-1}  we get that the map 
$ b \mapsto b + \tfrac{1}{2} \sum_{ \alpha \in A} : \Phi ^{\alpha} \Phi_{[u_{\alpha}, b]}:$
induces a homomorphism $$\Theta:=\Phi\circ \Psi_{|V^{M_i(k) } }:  V^{M_i(k) } (\g ^{\natural} _i) \rightarrow  V_{M_i(k) + \chi_i }(\g^{\natural} _i) \otimes V_{-\chi_i} (\g ^{\natural} _i) \subset   V_{\alpha_k}(\g^\natural)\otimes F(\g_{1/2}).$$
Since $ M_i(k) + \chi_i  \in {\Z}_{\ge 0}$,  the vertex algebra $V_{\alpha_k}(\g^\natural)\otimes F(\g_{1/2})$ is unitary, hence it  is completely reducible, so $Im\,\Theta$ is simple and in turn $(J^{\{x_{\theta_i}\}}_{(-1)})^{M_i(k)+1}\vac =0$.
\end{proof}
We have therefore proved the following result.

\begin{theorem} \label{free-f} If $k$ is non-collapsing and lies in  the unitary range, then $\Psi$ induces a non-trivial homomorphism of vertex algebras
$$\widetilde \Psi:W_k^{\min}(\g)\to \mathcal H \otimes \mathbb V_k(\g^\natural)\otimes F(\g_{1/2}),$$
where   $\mathbb V_k(\g^{\natural}) = \bigotimes_{i\in S} V_{M_i(k)+\chi_i}(\g^{\natural}_i)$.
\end{theorem}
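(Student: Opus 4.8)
The plan is to realize $\widetilde\Psi$ as the factorization of $\Psi$ through the maximal ideal $I^k$, the entire substance of which is already contained in Proposition \ref{p3}. That proposition yields such a factorization under the single hypothesis that $M_i(k)+\chi_i\in\ZZ_{\ge 0}$ for all $i\in S$, so the only work is to check that this inequality is forced once $k$ is a non-collapsing level in the unitary range. Since $\Ws=L^W(0,0)$ is unitary, the necessary conditions of Theorem \ref{nec}(1) give $M_i(k)\in\ZZ_{\ge 0}$; as each $\chi_i\in\{-1,-2\}$ is an integer, $M_i(k)+\chi_i\in\ZZ$ automatically, and only its sign is in question.

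For every $\g$ with $\chi_i=-1$ — that is, all cases other than $spo(2|3)$ — the condition $M_i(k)+\chi_i\ge 0$ amounts to $M_i(k)\ge 1$, and Table 1 shows that this holds throughout the unitary range, the only exceptions being the $D(2,1;\tfrac mn)$ levels at which some $M_i(k)=0$. For $spo(2|3)$ one has $\chi_1=-2$, and the unique level of the unitary range with $M_1(k)+\chi_1<0$ is $k=-3/4$, where $M_1(k)=1$. At each of these finitely many exceptional levels I would verify that $k$ is collapsing by computing the central charge of $\Ws$ and matching it with that of the surviving affine factor $V_{M_j(k)}(\gn_j)$ — for instance at $k=-3/4$ one gets $c=1=c(V_1(sl_2))$ — and then invoking the classification of collapsing levels in \cite{AKMPP}. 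Thus precisely the offending levels are collapsing, and the hypothesis that $k$ is non-collapsing forces $M_i(k)+\chi_i\in\ZZ_{\ge 0}$ for all $i$.

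With the hypothesis of Proposition \ref{p3} in force, that proposition gives $\Psi(W^k_{\min}(\g))=\Ws$; equivalently, $\Psi$ annihilates the generators $v_i$ of $I^k$, so $I^k\subseteq\ker\Psi$ and $\Psi$ descends to a homomorphism $\widetilde\Psi\colon\Ws=\Wu/I^k\to V^1(\C a)\otimes V_{\alpha_k}(\gn)\otimes F(\g_{1/2})$, which is non-trivial, and in fact injective since $\Ws$ is simple and $\Psi\neq 0$. It then remains only to name the target: $V^1(\C a)$ is the rank-one Heisenberg vertex algebra $\mathcal H$, and since $\alpha_k$ restricts to the level $M_i(k)+\chi_i$ on each $\gn_i$, one has $V_{\alpha_k}(\gn)=\bigotimes_{i\in S}V_{M_i(k)+\chi_i}(\gn_i)=\mathbb V_k(\gn)$, which is the codomain $\mathcal H\otimes\mathbb V_k(\gn)\otimes F(\g_{1/2})$ of the statement. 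The descent is immediate; the one genuinely delicate step is the second paragraph, where one must confirm that every level driving some $M_i(k)+\chi_i$ negative is collapsing. This is precisely the point where the hypothesis ``non-collapsing'' is indispensable, and carrying out the central-charge comparison for the few exceptional $spo(2|3)$ and $D(2,1;\tfrac mn)$ levels is where the care is needed.
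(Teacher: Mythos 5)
Your proof follows the paper's route exactly: the paper derives Theorem \ref{free-f} immediately from Proposition \ref{p3} (the sentence ``We have therefore proved the following result''), identifying $V^1(\C a)$ with $\mathcal H$ and $V_{\alpha_k}(\g^\natural)$ with $\mathbb V_k(\g^\natural)$, just as you do in your final paragraph. Your second paragraph --- checking that non-collapsing levels in the unitary range force $M_i(k)+\chi_i\in\ZZ_{\ge 0}$, with the boundary cases $k=-3/4$ for $spo(2|3)$ and $mq=1$ or $nq=1$ for $D(2,1;\tfrac{m}{n})$ being precisely the collapsing ones --- is a point the paper leaves entirely implicit, so your version is, if anything, more complete than the published argument.
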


 Since $\mathbb V_k(\g^{\natural}) $ and $F(\g_{1/2})$ are rational vertex algebras, they don't have non-positive energy modules.
 But the Heisenberg vertex algebra $\mathcal H$ is not rational and it has very rich representation theory:
 \begin{itemize}
 \item[(fin)] The category of $\mathcal H$--modules with finite-dimensional weight spaces $\mathcal E ^{fin} (\mathcal H) $  is semi-simple and every irreducible  $\mathcal H$--module in that category is of highest weight. The category $\mathcal E ^{fin} (\mathcal H) $  concides with the  category of modules in $\mathcal E^+ (\mathcal H)$ of finite length.
 \item[(inf)]  The category $\mathcal E(\mathcal  H)$ admits irreducible $\mathcal H$--modules with infinite-dimensional weight spaces \cite{GFM}.  
 \end{itemize}
 
\begin{remark}\label{later}
 Using Theorem \ref{free-f} and the representation theory of the Heisenberg vertex algebra $\mathcal H$ one  can prove the following facts:
assume that $E_1$ is an  
 irreducible 
 module in   $\mathcal E(\mathcal H)$. Let $E_2$ be a $\mathbb V_k(\g^{\natural}) $-module. Then  we have:
 
 \begin{itemize}
 \item[(1)] $E_1 \otimes E_2 \otimes F(\g_{1/2})$ lies in $\mathcal E_k$. It lies  in the category $\mathcal E^+ _k$ if and only if $E_1$ lies  in the category $\mathcal E^+(\mathcal H)$.
 
\item[(2)]  If $E_1$ is a weight   $\mathcal H$-module with infinite-dimensional weight spaces    then   $E_1 \otimes E_2 \otimes F(\g_{1/2})$ is a non-positive energy weight   $W_k^{min}(\g)$--module with infinite-dimensional weight spaces.
 
  \item[(3)] If $E_1$ is a non-weight  $\mathcal H$-module,     then   $E_1 \otimes E_2 \otimes F(\g_{1/2})$ is a non-weight  $W_k^{min}(\g)$-module. In particular, we can construct analogs of non-weight  $V_k(\g)$--modules from Remark \ref{non-wh-1}. 
  \end{itemize}
  
 \end{remark}
 \vskip5pt
 \noindent  Further details on applications of  Theorem \ref{free-f} and Remark \ref{later} will appear elsewhere.
 \subsection*{Acknowledgements}
The author thanks the referees for many constructive suggestions. The authors are grateful to  Maria Gorelik for correspondence. 
D.A.\ is partially supported by the
QuantiXLie Centre of Excellence, a project cofinanced
by the Croatian Government and European Union
through the European Regional Development Fund - the
Competitiveness and Cohesion Operational Programme
(KK.01.1.1.01.0004).
V.K. is partially supported by the Simons Collaboration grant.

 \section*{Declarations}
\noindent{\bf Competing Interests.} The authors have no competing interests to declare that are relevant to the content of this article.

\noindent {\bf Data Availability Statement.} Data sharing not applicable to this article as no datasets were generated or analysed during the current study.


  \vskip20pt
  \footnotesize{
  \noindent{\bf D.A.}:  Department of Mathematics, Faculty of Science, University of Zagreb, Bijeni\v{c}ka 30, 10 000 Zagreb, Croatia;
{\tt adamovic@math.hr}
  
\noindent{\bf V.K.}: Department of Mathematics, MIT, 77
Mass. Ave, Cambridge, MA 02139;\newline
{\tt kac@math.mit.edu}

\noindent{\bf P.MF.}: Politecnico di Milano, Polo regionale di Como,
Via Anzani 42, 22100 Como,
Italy; 

\noindent {\tt pierluigi.moseneder@polimi.it}

\noindent{\bf P.P.}: Dipartimento di Matematica, Sapienza Universit\`a di Roma, P.le A. Moro 2,
00185, Roma, Italy; {\tt papi@mat.uniroma1.it}}
\end{document}